\renewcommand{\tocsection}[3]{%
  \indentlabel{\@ifnotempty{#2}{\bfseries\ignorespaces#1 #2\quad}}\bfseries#3}
\renewcommand{\tocsubsection}[3]{%
  \indentlabel{\@ifnotempty{#2}{\ignorespaces#1 #2\quad}}#3}
\newcommand\@dotsep{4.5}
\def\@tocline#1#2#3#4#5#6#7{\relax
  \ifnum #1>\c@tocdepth 
  \else
    \par \addpenalty\@secpenalty\addvspace{#2}%
    \begingroup \hyphenpenalty\@M
    \@ifempty{#4}{%
      \@tempdima\csname r@tocindent\number#1\endcsname\relax
    }{%
      \@tempdima#4\relax
    }%
    \parindent\z@ \leftskip#3\relax \advance\leftskip\@tempdima\relax
    \rightskip\@pnumwidth plus1em \parfillskip-\@pnumwidth
    #5\leavevmode\hskip-\@tempdima{#6}\nobreak
    \leaders\hbox{$\m@th\mkern \@dotsep mu\hbox{.}\mkern \@dotsep mu$}\hfill
    \nobreak
    \hbox to\@pnumwidth{\@tocpagenum{\ifnum#1=1\bfseries\fi#7}}\par
    \nobreak
    \endgroup
  \fi}
\renewcommand\csname r@tocindent0\endcsname{0pt}
\def\l@subsection{\@tocline{2}{0pt}{2.5pc}{5pc}{}}
\newcommand{\paren}[1]{\left(#1\right)}
\newcommand{\R}{\mathbb{R}}
\newcommand{\bars}{\overline s}
\newcommand{\X}{\bm{X}}
\newcommand{\be}{\bm{e}}
\newcommand{\p}{\partial}
\newcommand{\abs}[1]{\left\lvert #1 \right\rvert}
\newcommand{\norm}[1]{\left\lVert #1 \right\rVert}
\newcommand{\wh}[1]{\widehat{#1}}
\newcommand{\wt}[1]{\widetilde{#1}}
\newcommand{\mc}[1]{\mathcal{#1}}
\newtheorem{theorem}{Theorem}[section]
\newtheorem{lemma}[theorem]{Lemma}
\newtheorem{corollary}[theorem]{Corollary}
\theoremstyle{definition}
\newtheorem{remark}[theorem]{Remark}
\newtheorem{obs}[theorem]{Observations}
\begin{document}
\title[Classical elastohydrodynamics for a swimming filament]{Well-posedness and applications of classical elastohydrodynamics for a swimming filament}

\author{Yoichiro Mori}
\address[Y. Mori]{Department of Mathematics, University of Pennsylvania, Philadelphia, PA 19104}
\email{y1mori@sas.upenn.edu}

\author{Laurel Ohm}
\address[L. Ohm]{Department of Mathematics, Princeton University, Princeton, NJ 08540}
\email[Corresponding author]{laurel.ohm@princeton.edu}

\begin{abstract} 
We consider a classical elastohydrodynamic model of an inextensible filament undergoing planar motion in $\R^3$. The hydrodynamics are described by resistive force theory, and the fiber elasticity is governed by Euler-Bernoulli beam theory.   
Our aim is twofold: (1) Serve as a starting point for developing the mathematical analysis of filament elastohydrodynamics, particularly the analytical treatment of an inextensibility constraint, and (2) As an application, prove conditions on internal fiber forcing that allow a free-ended filament to swim. Our analysis of fiber swimming speed is supplemented with a numerical optimization of the internal fiber forcing, as well as a novel numerical method for simulating an inextensible swimmer.
\end{abstract}

\maketitle

\tableofcontents

\setlength{\parskip}{6pt}


\section{Introduction}

We consider the planar motion of an inextensible, free-ended filament immersed in a Stokes fluid in $\R^3$. 
The hydrodynamic effects on the filament are described using resistive force theory (or local slender body theory), one of the most fundamental tools for modeling flagellar locomotion in viscous fluids \cite{gray1955propulsion, johnson1979flagellar, keller1976swimming, pironneau1974optimal}. The structure of the filament itself is described by the classical Euler-Bernoulli beam theory, coupled with internal forcing corresponding to a (possibly nonzero) preferred curvature \cite{hines1978bend, wiggins1998flexive, wiggins1998trapping, camalet1999self, camalet2000generic, riedel2007molecular, hilfinger2009nonlinear, sartori2016dynamic, tornberg2004simulating}.
Resistive force theory coupled with the elastic response of the fiber has been shown to be remarkably effective at capturing actual filament dynamics, as demonstrated by \cite{friedrich2010high, yu2006experimental}.

\emph{Aim 1: Analysis of inextensibility.}
A key feature of this model is the inextensibility constraint which prohibits the fiber from growing or shrinking in length over time. Inextensibility features prominently in related models of, for example, vesicle dynamics \cite{veerapaneni2011fast, veerapaneni2009boundary, rahimian2010dynamic}, but its treatment is relatively underdeveloped from the perspective of mathematical analysis, particularly in the dynamical setting. 

We begin by considering the relaxation of the filament with zero preferred curvature and show global-in-time existence and uniqueness of solutions for small initial data, as well as local well-posedness for large initial data. Global well-posedness for large data is hindered by the behavior of the filament tension. However, we show that if the filament's initial bending energy is small, then a solution exists globally. As a byproduct, we obtain nonlinear stability of the straight filament. The filament evolution in this setting may be regarded as a simpler version of the Peskin problem (immersed boundary method), which has been the subject of many recent PDE works \cite{gancedo2020global, garcia2020peskin, lin2019solvability, mori2019well, tong2021regularized}. This work also follows a recent program by the authors to place nonlocal slender body theory on firm theoretical footing \cite{closed_loop, free_ends, rigid, inverse, regularized}. These previous papers have all considered the static boundary value problem for Stokes flow about a curve at a single instant in time. The form of the integral operator in nonlocal slender body renders the dynamic problem for the curve evolution much more difficult. Using resistive force theory for the force-to-velocity map along the fiber simplifies the curve evolution problem while still remaining relevant from a current modeling perspective (see, for example, \cite{allende2018stretching, chakrabarti2021signatures, coy2017counterbend, du2019dynamics, liu2021theoretical, nguyen2014hydrodynamics, pozveh2021resistive, stein2021swirling, waszkiewicz2021stability, young2012dynamics}).

\emph{Aim 2: Analysis of swimming and applications.}
Resistive force theory coupled with fiber elastodynamics has been especially useful for understanding undulatory swimming at low Reynolds number \cite{el2020optimal, gadelha2010nonlinear, gadelha2019flagellar, hu2022enhanced, montenegro2015spermatozoa, lauga2013shape, lauga2007floppy,  spagnolie2010optimal, lauga2009hydrodynamics}. This brings us to our second aim: understanding conditions on a filament's internal forcing which allow it to swim. On the analysis side, we consider the filament dynamics under small-amplitude periodic forcing, corresponding to a nonzero preferred curvature along the fiber. We show existence of a unique periodic solution and derive an expression for the swimming speed of the filament. From the swimming expression, we obtain conditions on the periodic forcing that give rise to propulsion. Our swimming analysis may also be compared to other analysis works on swimming, including \cite{galdi2020self, martin2008initial, galdi2009motion, nevcasova2011weak}.  

The swimming expression we obtain allows us to perform a small numerical optimization to determine the internal fiber forcing which gives rise to the fastest swimming for a fixed amount of work and bending energy. The optimal forcing that we find is a type of traveling wave; notably, we do not \emph{a priori} assume any conditions on the forcing other than time periodicity. Our optimization can be compared with related optimizations of the actual filament shape \cite{lighthill1975mathematical,spagnolie2010optimal,lauga2013shape,neal2020doing,pironneau1974optimal}, although we emphasize that our optimization is for the active forcing along the filament, and the resulting fiber deformation is an emergent property.

Finally, we develop a novel numerical method based on two recent approaches to inextensibility, due to Moreau et al. \cite{moreau2018asymptotic} and Maxian et al. \cite{maxian2021integral}, which rely on different but related methods of avoiding the need to solve for the filament tension. 
These reformulations of filament elastohydrodynamics are interesting from an analysis perspective because, as seen in the well-posedness results of this paper, estimates for the filament tension are a limiting factor in the solution theory which hinder us in showing global existence for large initial data. A weaker notion of inextensibility --  perhaps related to these reformulations -- may overcome this difficulty. Moreover, a broader aim of this paper is to serve as a starting point for a full numerical analysis of the various formulations of fiber inextensibility used in numerical simulations, ranging from penalization methods \cite{tornberg2004simulating} to variants of the more recent approaches which avoid solving for the fiber tension altogether \cite{moreau2018asymptotic, hall2019efficient, walker2020efficient, walker2021regularised, maxian2021integral, maxian2022hydrodynamics}. Our own numerical method is validated against a direct implementation of the classical formulation of filament elastohydrodynamics. We use the method to verify numerically the observations from our swimming analysis.

\subsection{Setup and statement of results}
We consider an inextensible elastic filament in $\R^3$ with centerline $\X:[0,L]\times[0,T]\to \R^3$ parameterized by arclength $s$. Throughout, we will use the subscript $\cdot_s$ to denote differentiation $\frac{\p}{\p_s}\cdot$ with respect to arclength. The filament is assumed to undergo planar deformations only; in particular, we can define a unique in-plane unit normal vector $\be_{\rm n}(s,t)$ along the filament centerline. In the presence of active forcing in the form of a preferred curvature $\kappa_0(s,t)$, the classical resistive force theory formulation of the evolution of $\X(s,t)$ is given by
\begin{equation}\label{classical0}
\begin{aligned}
\frac{\p\X}{\p t}(s,t) &= \big(c_{\perp}{\bf I}+(c_{\parallel}-c_{\perp})\X_s\X_s^{\rm T}\big)\big(E(\X_{sss}-(\kappa_0)_s\be_{\rm n})-\tau(s,t)\X_s\big)_s \\
\abs{\X_s}^2&=1 
\end{aligned}
\end{equation}
along with the force-free and torque-free boundary conditions $\big(E(\X_{sss}-(\kappa_0)_s\be_{\rm n})-\tau\X_s\big)\big|_{s=0,L}=0$, $E(\X_{ss}-\kappa_0\be_{\rm n})\big|_{s=0,L}=0$. 
For a filament of radius $\varepsilon$, the constants $c_{\perp}\approx \frac{\log(\varepsilon/L)}{4\pi}$ and $c_{\parallel}\approx \frac{\log(\varepsilon/L)}{2\pi}$ for $\varepsilon/L\ll 1$; in particular, $c_{\parallel}\approx 2c_{\perp} <0$. (Note that some sources define these constants as $1/c_{\perp}$ and $1/c_{\parallel}$; see, for example, \cite[Chapter 6.3]{lauga2020fluid}).
The hydrodynamic force on the right hand side consists of an elastic term with constant bending stiffness $E>0$, the unknown tension $\tau(s,t)$ which serves as a Lagrange multiplier to enforce $\abs{\X_s}^2=1$, and active forcing due to a time-varying preferred curvature $\kappa_0(s,t)$ along the filament (see \cite{camalet2000generic,thomases2017role} or the derivation in Section \ref{sec:periodic}). We will consider both the evolution of a passive filament ($\kappa_0(s,t)\equiv0$) and an active filament with time-periodic forcing. 

Rescaling arclength as $s/L$, $\X$ as $\X/L$, tension as $\frac{L^2}{E}\tau$, forcing as $L\kappa_0$, time as $\frac{E\abs{c_{\perp}}}{L^4}t$, and defining $\gamma=\frac{c_{\parallel}}{c_{\perp}}-1$, we may rewrite \eqref{classical0} as 
\begin{equation}\label{classical}
\begin{aligned}
\frac{\p\X}{\p t}(s,t) &= -\big({\bf I}+\gamma\X_s\X_s^{\rm T}\big)\big(\X_{sss}-\tau(s,t)\X_s-(\kappa_0)_s\be_{\rm n}\big)_s \\
\abs{\X_s}^2&=1 \\
(\X_{ss}-\kappa_0\be_{\rm n})\big|_{s=0,1}&=0\,, \quad (\X_{sss}-\tau\X_s-(\kappa_0)_s\be_{\rm n})\big|_{s=0,1}=0\,.
\end{aligned}
\end{equation}
Throughout we will use the notation $I=[0,1]$ to denote the unit interval. We also note that the constant $\gamma\approx 1 >0$.  
Since the filament is planar and inextensible, the tangent vector $\be_{\rm t}(s,t)=\X_s(s,t)$ at each point may be determined with respect to the angle $\theta(s,t)$ between $\be_{\rm t}(s,t)$ and a fixed axis, which we take to be $\be_{\rm t}(0,0)$. 
In particular, we may write 
\begin{align*}
\X_s &= \begin{pmatrix}
\cos\theta\\
\sin\theta
\end{pmatrix} = \be_{\rm t}\,, \quad \X_{ss}= \begin{pmatrix}
-\sin\theta\\
\cos\theta
\end{pmatrix} \theta_s = \theta_s \be_{\rm n}\, .
\end{align*}

Differentiating the formulation \eqref{classical} with respect to $s$ gives the evolution of the tangent vector $\dot\be_{\rm t}=\dot\X_s$. The normal and tangential components of this evolution yield two equations, one for the evolution of the tangent angle $\theta$ and one for the tension $\tau$ at each time: 
\begin{align}
\dot\theta &=-\theta_{ssss}+(2+\gamma)(\theta_s^3)_s +(2+\gamma)\tau_s\theta_s+\tau\theta_{ss} + (\kappa_0)_{sss}-(1+\gamma)\theta_s^2(\kappa_0)_s  \label{thetaEQ0} \\
(1+\gamma)\tau_{ss}-(\theta_s)^2\tau &= (\theta_s)^4+\theta_{ss}^2-(4+3\gamma)(\theta_{ss}\theta_s)_s + (2+\gamma)(\kappa_0)_{ss}\theta_s +(1+\gamma)\theta_{ss}(\kappa_0)_s  \label{tauEQ0} \\
 (\theta_s-\kappa_0)\big|_{s=0,1}&=0\,,\quad (\theta_{ss}-(\kappa_0)_s)\big|_{s=0,1}=0\,, \quad (\tau+\kappa_0^2)\big|_{s=0,1}=0\,. \label{BCs0}
\end{align}

Note that the boundary conditions for $\tau$ are due to the torque-free condition $\X_{ss}\big|_{s=0,1}=\theta_s\be_{\rm n}\big|_{s=0,1}=\kappa_0\be_{\rm n}\big|_{s=0,1}$ along with the form of $\X_{sss}=-\theta_s^2\be_{\rm t}+\theta_{ss}\be_{\rm n}$ in the force-free condition in \eqref{classical}.
The tangent angle formulation \eqref{thetaEQ0}-\eqref{tauEQ0}, with or without a forcing $\kappa_0$, is a popular choice for numerical simulations \cite{camalet1999self, camalet2000generic, hilfinger2009nonlinear, hines1978bend, hu2022enhanced, moreau2018asymptotic, sartori2016dynamic, wiggins1998flexive, yu2006experimental}. Given the boundary conditions \eqref{BCs0} and the fact that only derivatives of $\theta$ appear on the right hand side of \eqref{thetaEQ0} and in the tension equation \eqref{tauEQ0}, we find it more convenient to work with the evolution of the filament curvature $\kappa=\theta_s$ rather than $\theta$, as is done in \cite{goldstein1995nonlinear,thomases2017role}. In particular, given an initial tangent angle $\theta\big|_{t=0}=\theta_{\rm in}(s)$, once the curvature evolution $\kappa(s,t)$ is known we may uniquely recover $\theta(s,t)$ by
\begin{equation}\label{thetadot}
\dot\theta = -\kappa_{sss}+(2+\gamma)(\kappa^3)_{ss} +(2+\gamma)\tau_s\kappa +\tau\kappa_s + (\kappa_0)_{sss}-(1+\gamma)\kappa^2(\kappa_0)_s \, .
\end{equation}

Furthermore, it will then be convenient to consider the evolution of the difference $\overline\kappa=\kappa-\kappa_0$ as well as $\overline\tau=\tau+\kappa_0^2$. Using equations \eqref{thetaEQ0}--\eqref{BCs0}, we have that $\overline\kappa$ and $\overline\tau$ satisfy the equations  
\begin{align}
\dot{\overline\kappa} &=-\overline\kappa_{ssss}-\dot\kappa_0 +\big[3(2+\gamma)\overline\kappa(\overline\kappa+2\kappa_0)\overline\kappa_s+(5+3\gamma)\kappa_0^2\overline\kappa_s+ (5+2\gamma)\overline\kappa^2(\kappa_0)_s  \nonumber \\
&\hspace{3cm}
+2(3+\gamma)\overline\kappa\kappa_0(\kappa_0)_s +(2+\gamma)\overline\tau_s(\overline\kappa+\kappa_0)+\overline\tau(\overline\kappa+\kappa_0)_s \big]_s  \label{theta_eqn}\\
\overline\tau_{ss}-\frac{(\overline\kappa+\kappa_0)^2}{1+\gamma}\overline\tau &=\frac{1}{1+\gamma}\big[\overline\kappa(\overline\kappa+\kappa_0)^2(\overline\kappa+2\kappa_0) + (\overline\kappa+\kappa_0)_s\overline\kappa_s \nonumber\\
&\hspace{2cm}-(1+\gamma)\big(\overline\kappa(\overline\kappa+2\kappa_0)\big)_{ss} -(2+\gamma)\big(\overline\kappa_s(\overline\kappa+\kappa_0)\big)_s  \big] \label{T_eqn} \\
%
%
 \overline\kappa\big|_{s=0,1}&=0\,,\quad \overline\kappa_s\big|_{s=0,1}=0\,, \quad \overline\tau\big|_{s=0,1}=0\,. \label{BCs}
\end{align}


The formulation \eqref{theta_eqn}--\eqref{BCs} serves as the basis of our analysis. We will begin by considering the passive filament $\kappa_0(s,t)\equiv 0$, for which the dynamics reduce to the simpler system 
\begin{align}
\dot\kappa &=-\kappa_{ssss}+(2+\gamma)(\kappa^3)_{ss} +(2+\gamma)(\tau_s\kappa)_s+(\tau\kappa_s)_s  \label{noforce_eqn1}\\
\tau_{ss}-\frac{\kappa^2}{1+\gamma}\tau &=\frac{1}{1+\gamma}\bigg(\kappa^4 + (\kappa_s)^2-(4+3\gamma)(\kappa_s\kappa)_s  \bigg) \label{noforce_eqn2} \\
 \kappa\big|_{s=0,1}&=0\,,\quad \kappa_s\big|_{s=0,1}=0\,, \quad \tau\big|_{s=0,1}=0\,. \label{noforce_BCs}
\end{align}

We define the operator $\mc{L}$ to be the linear evolution of \eqref{noforce_eqn1} near $\kappa\equiv 0$: 
\begin{equation}\label{L_op}
 \mc{L}[\psi]:= \p_{ssss}\psi\,, \qquad \psi(0)=\psi(1)=0\,, \; \psi_s(0)=\psi_s(1)=0\,.
\end{equation}

Since the remaining nonlinear terms of \eqref{noforce_eqn1} are a perfect derivative in $s$, we write the remainder as $(\mc{R}[\kappa])_s$, where
\begin{equation}\label{remainder}
\mc{R}[\kappa]= 3(2+\gamma)\kappa^2\kappa_s+ (2+\gamma)\tau_s\kappa + \tau\kappa_s \,.
\end{equation}

Letting $\kappa_{\rm in}(s)=\kappa(s,0)$ denote the initial curvature of the fiber, we may define a mild solution $\kappa(s,t)$ to \eqref{noforce_eqn1} by the Duhamel formula 
\begin{equation}\label{theta_mild}
\kappa(s,t) = e^{-t\mc{L}}\kappa_{\rm in}(s) + \int_0^t e^{-(t-t')\mc{L}}(\mc{R}[\kappa(s,t')])_s \, dt'\,.
\end{equation}

The fiber curvature in \eqref{noforce_eqn1}--\eqref{noforce_BCs} obeys the energy identity  
\begin{equation}\label{Theta_energy}
\frac{1}{2}\frac{d}{dt}\int_0^1\kappa^2 \, ds + \int_0^1\left(\kappa_{ss}-\kappa^3-\tau\kappa \right)^2\, ds + \int_0^1(1+\gamma)\left(3\kappa\kappa_s + \tau_s \right)^2\,ds =0\,.
\end{equation}
In particular, $\norm{\kappa}_{L^2}$ is nonincreasing in time.
The identity \eqref{Theta_energy} is more easily seen at the level of \eqref{classical}, where sufficiently regular curves $\X(s,t)$ satisfy
\begin{equation}\label{energy}
\frac{1}{2}\frac{d}{dt}\int_0^1\abs{\X_{ss}}^2\,ds = -\int_0^1\bigg((\X_{sss}-\tau\X_s)_s^2+\gamma\big(\X_s\cdot(\X_{sss}-\tau\X_s)_s\big)^2\bigg)\,ds \le 0\,. 
\end{equation}

This may be shown by multiplying both sides of \eqref{classical} by $(\X_{sss}-\tau\X_s)_s$ and integrating in $s$. Integration by parts on the left hand side yields 
\begin{align*}
\int_0^1\frac{\p\X}{\p t}\cdot(\X_{sss}-\tau\X_s)_s \, ds &= -\int_0^1\dot\X_s(s,t)\cdot(\X_{sss}-\tau\X_s) \, ds \\
&= -\int_0^1\dot\X_s\cdot\X_{sss} \, ds = \int_0^1\dot\X_{ss}\cdot\X_{ss}\,ds \\
&= \frac{1}{2}\frac{d}{dt}\int_0^1\abs{\X_{ss}}^2\,,
\end{align*}
where we have used the inextensibility constraint $\dot\X_s\cdot\X_s=0$.  \\

The system \eqref{noforce_eqn1}--\eqref{noforce_BCs} is invariant under the scaling 
\begin{align*}
(\kappa(s,t),\tau(s,t))\mapsto (\lambda\kappa(\lambda s,\lambda^4 t)\,, \lambda^2\tau(\lambda s,\lambda^4 t))\,.
\end{align*}
Norms which are invariant under the above scaling symmetry are known as critical norms and play an important role in the well-posedness theory. Norms which decay upon `zooming in' via the above scaling are known as subcritical. The natural expectation is that the PDE is (locally) well-posed in subcritical spaces and solutions belonging to subcritical cases are in fact smooth. Typically, if we have \emph{a priori} control on a subcritical norm for all time, then the solution should be globally well-posed. This is the case for the 1D viscous Burgers' equation, for example. In our case, any $H^s$ norm with $s>-1/2$ is subcritical. Furthermore, by \eqref{Theta_energy} we have that $\norm{\kappa}_{L^2}$ is monotone decreasing. However, we have found the regularity theory to be much more subtle due to the behavior of the tension $\tau$ (see Remark \ref{rmk} and Section \ref{subsec:tension}).\footnote{In particular, the local solution to \eqref{noforce_eqn1}--\eqref{noforce_BCs} described in Theorem \ref{thm:existence} may be extended up to some maximal time $T^*(\kappa_{\rm in})$ depending on the initial data. If the energy identity \eqref{Theta_energy} can be used (in conjunction with a Gr\"onwall argument) to obtain a maximal existence time which depends only on the norm $\norm{\kappa_{\rm in}}_{L^2}$ rather than the full initial data, or if it can be shown that, given $T$, the local solution $\kappa$ remains in $C([0,T];L^2(I))$ up to time $T$, then the solution can be extended globally. 
The main difficulty in proving these arises in bounding the tension $\tau$ (see e.g. bound \eqref{T_H1bound}), which seems to necessarily depend on $\norm{\kappa}_{\dot H^1}^2$ -- i.e. a rather high power of a high regularity norm for $\kappa$. }  
This makes the system \eqref{noforce_eqn1}--\eqref{noforce_BCs} somewhat surprising from a regularity point of view. Nevertheless, the energy inequality \eqref{Theta_energy} motivates our choice of function space for the following well-posedness result.

\begin{theorem}[Well-posedness]\label{thm:existence}
\emph{Local-in-time for large data:} For any $\kappa_{\rm in}\in L^2(I)$ there exists a time $0<T_0<1$ such that the system \eqref{noforce_eqn1}--\eqref{noforce_BCs} admits a unique mild solution $\kappa\in C([0,T_0];L^2(I))\cap C((0,T_0];H^1(I))$.

\emph{Global-in-time for small data:}
There exists a constant $0<c_0<1$ such that, given $\kappa_{\rm in}\in L^2(I)$ with $\norm{\kappa_{\rm in}}_{L^2(I)} \le c_0$, the system \eqref{noforce_eqn1}--\eqref{noforce_BCs} admits a unique mild solution $\kappa\in C([0,T];L^2(I))\cap C((0,T];H^1(I))$ for any $T>0$. The solution $\kappa$ satisfies
\begin{equation}\label{exp_bound}
\norm{\kappa}_{L^2(I)}+ \min\{t^{1/4},1\}\norm{\kappa}_{\dot H^1(I)} \le c e^{-t\lambda_1}\norm{\kappa_{\rm in}}_{L^2(I)}\,.
\end{equation}
Here $\lambda_1$ is the first eigenvalue of the linearized operator $\mc{L}$ (see \eqref{L_efuns}). 
\end{theorem}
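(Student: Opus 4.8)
The plan is to set up a fixed-point argument on the Duhamel formula \eqref{theta_mild}, treating the tension $\tau$ as a functional of $\kappa$ determined by solving the linear ODE boundary-value problem \eqref{noforce_eqn2}--\eqref{noforce_BCs}. The first ingredient I would establish is a set of smoothing estimates for the analytic semigroup $e^{-t\mc{L}}$ generated by the bi-Laplacian with the clamped boundary conditions in \eqref{L_op}: namely $\norm{e^{-t\mc{L}}f}_{\dot H^1}\lesssim t^{-1/4}\norm{f}_{L^2}$ and, crucially, an estimate for the inhomogeneous term of the form $\norm{e^{-t\mc{L}}g_s}_{L^2}\lesssim t^{-1/4}\norm{g}_{L^2}$ and $\norm{e^{-t\mc{L}}g_s}_{\dot H^1}\lesssim t^{-1/2}\norm{g}_{L^2}$, so that the loss of one derivative coming from the $\p_s$ in $(\mc{R}[\kappa])_s$ is absorbed by the parabolic smoothing (these follow from the spectral decomposition \eqref{L_efuns} and the exponential decay rate $e^{-t\lambda_1}$, which also gives the decay factor in \eqref{exp_bound}). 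The second ingredient is the tension estimate: from \eqref{noforce_eqn2}, since $\kappa^2/(1+\gamma)\ge 0$ and the right-hand side is (after integrating the $(\kappa_s\kappa)_s$ term by parts against a test function) controlled by $\norm{\kappa}_{\dot H^1}$ together with lower-order terms, a maximum-principle / energy argument for the Sturm-Liouville operator $\p_{ss}-\kappa^2/(1+\gamma)$ with zero Dirichlet data yields bounds such as $\norm{\tau}_{H^1}\lesssim \norm{\kappa}_{\dot H^1}^2 + \norm{\kappa}_{L^2}$ (schematically the bound \eqref{T_H1bound} referred to in the footnote), and correspondingly $\norm{\mc{R}[\kappa]}_{L^2}\lesssim \norm{\kappa}_{\dot H^1}^3 + \text{l.o.t.}$, together with a Lipschitz version of this bound in the difference of two curvatures.

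For the local existence statement, I would work in the space $X_{T_0}=C([0,T_0];L^2(I))\cap C((0,T_0];H^1(I))$ with norm $\norm{\kappa}_{X_{T_0}}=\sup_{t\le T_0}\norm{\kappa(t)}_{L^2} + \sup_{t\le T_0} t^{1/4}\norm{\kappa(t)}_{\dot H^1}$, and show that the map $\kappa\mapsto e^{-t\mc{L}}\kappa_{\rm in} + \int_0^t e^{-(t-t')\mc{L}}(\mc{R}[\kappa(t')])_s\,dt'$ maps a ball of $X_{T_0}$ to itself and is a contraction, provided $T_0$ is small. The key computation is that $\int_0^t (t-t')^{-1/2}(t')^{-3/4}\,dt' = C t^{-1/4}$ and $\int_0^t (t-t')^{-1/4}(t')^{-3/4}\,dt'=C$ are finite Beta-function integrals, so the cubic term $\mc{R}[\kappa]$, whose $L^2$ norm carries a weight $(t')^{-3/4}$ from three factors of $t^{-1/4}$-bounded $\dot H^1$ norms, integrates to something controlled by a positive power of $T_0$ times $\norm{\kappa}_{X_{T_0}}^3$. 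This gives the self-map and contraction for small $T_0(\kappa_{\rm in})$; continuity in time at $t=0$ in $L^2$ follows from strong continuity of the semigroup and the fact that the Duhamel integral is $o(1)$, and uniqueness is immediate from the contraction. I would also need to verify that a mild solution in this class is a genuine (distributional, then classical) solution of \eqref{noforce_eqn1}--\eqref{noforce_BCs} with the tension recovered from \eqref{noforce_eqn2}, which is a standard bootstrap using the smoothing estimates.

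For the global small-data statement, the idea is to run the same contraction but on the infinite time interval in the weighted space with norm $\sup_{t\ge 0} e^{t\lambda_1}\big(\norm{\kappa(t)}_{L^2} + \min\{t^{1/4},1\}\norm{\kappa(t)}_{\dot H^1}\big)$. Because $\mc{R}[\kappa]$ is at least quadratic (in fact cubic) in $\kappa$, the Duhamel term gains a factor of $\norm{\kappa}^2$ or more relative to the linear term, so once $\norm{\kappa_{\rm in}}_{L^2}\le c_0$ is small enough the nonlinear contribution is a small perturbation and the fixed point persists for all time with the exponential bound \eqref{exp_bound}; the exponential weights are compatible with the Duhamel convolution precisely because $e^{-(t-t')\lambda_1}e^{-3t'\lambda_1}\le e^{-t\lambda_1}$ for $t'\le t$ when the remaining time-power integrals converge. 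I expect the main obstacle to be the tension estimate: one must show that solving \eqref{noforce_eqn2} does not lose regularity and that $\norm{\tau}$ genuinely scales like $\norm{\kappa}_{\dot H^1}^2$ (not worse), including a clean Lipschitz bound $\norm{\tau[\kappa_1]-\tau[\kappa_2]}_{H^1}\lesssim (\norm{\kappa_1}_{\dot H^1}+\norm{\kappa_2}_{\dot H^1})\norm{\kappa_1-\kappa_2}_{\dot H^1} + \ldots$, since the comparison principle for the $\kappa$-dependent Sturm-Liouville operator and the need to handle the $\p_{ss}$ on the right-hand side of \eqref{noforce_eqn2} (which a priori asks for two derivatives of $\kappa$) both require care; the resolution is to test against $\tau$ itself and integrate by parts so that only $\norm{\kappa_s}_{L^2}$ appears. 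Everything else — the semigroup estimates, the Beta-integral bookkeeping, the promotion from mild to classical — is routine once this is in hand.
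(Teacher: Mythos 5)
Your overall architecture (Duhamel fixed point in the time-weighted space, semigroup smoothing for the clamped bilaplacian, Lax--Milgram plus testing against $\tau$ for the tension, the cubic bound $\norm{\mc{R}[\kappa]}_{L^2}\lesssim\norm{\kappa}_{\dot H^1}^3(\norm{\kappa}_{L^2}+1)$, and the exponentially weighted norm for the global small-data statement) is exactly the paper's, and the small-data half of your argument is correct as written.

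There is, however, a genuine gap in your local-in-time large-data argument. You compute the Beta integrals correctly, $\int_0^t(t-t')^{-1/2}(t')^{-3/4}\,dt'=Ct^{-1/4}$ and $\int_0^t(t-t')^{-1/4}(t')^{-3/4}\,dt'=C$, but then claim the Duhamel term is controlled by ``a positive power of $T_0$'' times $\norm{\kappa}_{X_{T_0}}^3$. It is not: after multiplying by the weight $t^{1/4}$, both integrals produce constants independent of $T_0$ --- the exponents $-1/2-3/4$ and $-1/4-3/4$ sum with the weight to exactly zero, which is precisely the critical-scaling borderline the paper flags in Remark \ref{rmk}. Consequently, in a single ball of radius $M$ in your norm $\norm{\cdot}_{X_{T_0}}$ the self-map condition reads $\norm{\kappa_{\rm in}}_{L^2}+CM^3(M+1)\le M$, which forces $M$ (and hence $\norm{\kappa_{\rm in}}_{L^2}$) to be small no matter how small $T_0$ is; shrinking $T_0$ buys you nothing. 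The missing idea is to decouple the two components of the norm: work in the intersection of a ball of (possibly large) radius $M_0$ for $\sup_t\norm{\cdot}_{L^2}$ with a ball of small radius $M_1$ for $\sup_t t^{1/4}\norm{\cdot}_{\dot H^1}$, so the nonlinear contribution is $CM_1^3(M_0+1)$, small once $M_1$ is. One then needs that $\sup_{t\in[0,T_0]}t^{1/4}\norm{e^{-t\mc{L}}\kappa_{\rm in}}_{\dot H^1}\to 0$ as $T_0\to 0$ for arbitrary $\kappa_{\rm in}\in L^2(I)$; this is not a consequence of the uniform smoothing estimate alone (which only gives a bounded constant) but follows by approximating $\kappa_{\rm in}$ in $L^2$ by elements of $D(\mc{L}^{1/4})$, for which the free evolution stays bounded in $\dot H^1$ and the weight $t^{1/4}$ supplies the smallness (this is Lemma \ref{lem:init_dat} in the paper, and it is why $T_0$ depends on the profile of $\kappa_{\rm in}$ and not merely on $\norm{\kappa_{\rm in}}_{L^2}$). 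Without this step your contraction does not close for large data.
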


The time weight in \eqref{exp_bound} allows us to consider $\kappa_{\rm in}\in L^2(I)$ rather than $H^1(I)$. The norm $\norm{\kappa_{\rm in}}_{L^2(I)}$ has physical significance, as it is exactly the initial bending energy of the filament. The proof of Theorem \ref{thm:existence} is given in Section \ref{sec:wellpo}. In principle, one can further show that $\kappa(s,t)$ belongs to $C^\infty(I)$ for all $t>0$ using a bootstrapping argument, although this requires a bit of additional care (see Remark \ref{rem:bootstrap}).

Noting that $\kappa\equiv 0$ corresponds to a straight filament, as an immediate corollary of the exponentially decaying bound \eqref{exp_bound}, we obtain:
\begin{corollary}[Stability of straight fiber]\label{cor:straight_stab}
The straight filament $\kappa\equiv 0$ is nonlinearly stable to small perturbations.
\end{corollary}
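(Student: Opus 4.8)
The corollary is an immediate consequence of the global-in-time small-data estimate \eqref{exp_bound} in Theorem \ref{thm:existence}, so the proof is essentially a matter of unpacking what ``nonlinearly stable'' means in this context and checking that \eqref{exp_bound} delivers it. First I would recall that $\kappa \equiv 0$ corresponds to the straight configuration $\X_s \equiv \text{const}$, and that the relevant quantity measuring the deviation of a filament from straightness is the bending energy $\tfrac12\norm{\kappa}_{L^2(I)}^2$, which by \eqref{Theta_energy} is nonincreasing along solutions. Stability should therefore be phrased as: given $\varepsilon > 0$, there is $\delta > 0$ such that any initial curvature $\kappa_{\rm in} \in L^2(I)$ with $\norm{\kappa_{\rm in}}_{L^2(I)} \le \delta$ generates a global mild solution with $\norm{\kappa(\cdot,t)}_{L^2(I)} \le \varepsilon$ for all $t \ge 0$ (and in fact $\norm{\kappa(\cdot,t)}_{L^2(I)} \to 0$ as $t \to \infty$).

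**Key steps.** Fix $\varepsilon > 0$. Choose $\delta = \min\{c_0, \varepsilon/c\}$, where $c_0$ is the small-data threshold and $c$ is the constant appearing in \eqref{exp_bound}. For any $\kappa_{\rm in}$ with $\norm{\kappa_{\rm in}}_{L^2(I)} \le \delta \le c_0$, Theorem \ref{thm:existence} yields a unique global mild solution $\kappa \in C([0,\infty);L^2(I)) \cap C((0,\infty);H^1(I))$ satisfying \eqref{exp_bound}. Dropping the nonnegative $\dot H^1$ term on the left and using $\lambda_1 > 0$ (the first eigenvalue of $\mc L$ is strictly positive since $\mc L = \p_{ssss}$ with clamped boundary conditions is positive definite), we get
\begin{equation*}
\norm{\kappa(\cdot,t)}_{L^2(I)} \le c\, e^{-t\lambda_1} \norm{\kappa_{\rm in}}_{L^2(I)} \le c\, \delta \le \varepsilon \qquad \text{for all } t \ge 0,
\end{equation*}
which is the stability bound, and moreover $\norm{\kappa(\cdot,t)}_{L^2(I)} \to 0$ exponentially, giving asymptotic stability. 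Finally, one should note the translation- and rotation-invariance of the system: the straight filament is a one-parameter family of equilibria (parameterized by the fixed tangent direction $\be_{\rm t}(0,0)$, together with a rigid translation), so stability is understood modulo this symmetry — the curvature $\kappa$ is exactly the symmetry-reduced variable, which is why working at the level of \eqref{noforce_eqn1}--\eqref{noforce_BCs} is the natural setting.

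**Main obstacle.** There is essentially no obstacle here: all the work has already been done in establishing Theorem \ref{thm:existence}. The only point requiring a word of care is the relationship between smallness of $\kappa$ and ``smallness of the perturbation of the straight filament'' — i.e., translating control of the curvature back to control of the centerline $\X$ itself. Since $\kappa$ determines $\theta$ up to the constant $\theta(0,0)$ via \eqref{thetadot} (with the boundary data fixing the reconstruction), and $\theta$ determines $\X$ up to a translation via $\X_s = (\cos\theta, \sin\theta)^{\rm T}$, smallness of $\norm{\kappa}_{L^2}$ controls $\norm{\X_s - \be_{\rm t}(0,0)}$ in an appropriate norm modulo these rigid motions; this is the precise sense in which the geometric filament is stable. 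I would include a remark to this effect but would not belabor it, as it is a standard consequence of the arclength parameterization.
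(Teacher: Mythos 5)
Your proposal is correct and follows exactly the route the paper intends: the corollary is stated as an immediate consequence of the small-data bound \eqref{exp_bound}, and your unpacking (choose $\delta\le\min\{c_0,\varepsilon/c\}$, drop the $\dot H^1$ term, use $\lambda_1>0$ for exponential decay, and interpret stability of the centerline modulo rigid motions) is precisely what is meant.
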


We now turn to the filament with active internal forcing $\kappa_0\neq0$ in \eqref{theta_eqn}-\eqref{BCs}. We take $\kappa_0(s,t)$ to be time-periodic with period $T>0$ to simulate a waving flagellum. Note that existence and uniqueness of solutions to \eqref{theta_eqn}--\eqref{BCs} with sufficiently small $\kappa_0$ and $\overline\kappa_{\rm in}$ follow by similar arguments to Theorem \ref{thm:existence} (see Lemma \ref{lem:wellpo_force}). Moreover, we show the existence of a unique $T$-periodic solution $\overline\kappa$: 

\begin{theorem}[Existence of unique periodic solution]\label{thm:periodic}
There exist constants $0<c_0<1$ and $c_1\ge0$ such that, given a $T$-periodic preferred curvature $\kappa_0(s,t)\in C^1([0,T]; H^1(I))$ satisfying 
\begin{align*}
T\bigg(\sup_{0\le t\le T}\norm{\dot\kappa_0}_{L^2(I)}\bigg)= \varepsilon\le c_0\,, \quad \sup_{0\le t\le T}\norm{\kappa_0}_{H^1}=c_1\varepsilon\le c_0\,, 
\end{align*}
 there exists a unique $T$-periodic mild solution $\overline\kappa\in C([0,T];H^1(I))$ to the system \eqref{theta_eqn}--\eqref{BCs} satisfying the bound
 \begin{equation}\label{kappaH1}
  \sup_{0\le t\le T}\norm{\overline\kappa}_{H^1(I)} \le c(T)\varepsilon\,.
 \end{equation}
 Furthermore, there exists $c_2\ge 0$ such that, for any $\overline\varphi_{\rm in}\in L^2(I)$ satisfying $\norm{\overline\varphi_{\rm in}}_{L^2}\le c_2\varepsilon$, the corresponding unique solution to \eqref{theta_eqn}-\eqref{BCs} satisfies
 \begin{equation}\label{per_conv}
 \norm{\overline\kappa(\cdot,nT)-\overline\varphi(\cdot,nT)}_{L^2(I)} \le ce^{-nT\lambda_1}\norm{\overline\kappa_{\rm in}-\overline\varphi_{\rm in}}_{L^2(I)}, \quad n=1,2,\dots \,, 
 \end{equation} 
 where $\lambda_1$ is the first eigenvalue of the linearized operator $\mc{L}$ (see \eqref{L_efuns}). 
\end{theorem}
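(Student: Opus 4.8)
The plan is to realize the periodic solution as the fixed point of the time-$T$ (Poincar\'e) map, and then to deduce the convergence \eqref{per_conv} from a single global-in-time Duhamel estimate built on the spectral gap of $\mc{L}$. First I would write \eqref{theta_eqn}--\eqref{BCs} in mild form, with $\mc{L}=\p_{ssss}$ and clamped boundary conditions as in \eqref{L_op}, as
\[
\overline\kappa(t)=e^{-t\mc{L}}\overline\kappa_{\rm in}-\int_0^t e^{-(t-t')\mc{L}}\dot\kappa_0(t')\,dt'+\int_0^t e^{-(t-t')\mc{L}}\big(\wt{\mc{R}}[\overline\kappa](t')\big)_s\,dt',
\]
where $\wt{\mc{R}}[\overline\kappa]$ collects the bracketed terms of \eqref{theta_eqn} after eliminating $\overline\tau$ in favor of $(\overline\kappa,\kappa_0)$ via the elliptic problem \eqref{T_eqn}--\eqref{BCs}. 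The estimates needed are exactly those from Section \ref{sec:wellpo}: the smoothing bounds $\norm{e^{-t\mc{L}}}_{L^2\to\dot H^k}\lesssim t^{-k/4}e^{-t\lambda_1}$; the fact that $-\p_{ss}+(\overline\kappa+\kappa_0)^2/(1+\gamma)$ with homogeneous Dirichlet data is boundedly invertible, so that, since the right side of \eqref{T_eqn} is at least quadratic in $(\overline\kappa,\kappa_0)$, one has $\norm{\overline\tau}_{H^2}\lesssim(\norm{\overline\kappa}_{H^1}+\norm{\kappa_0}_{H^1})^2$ on small sets; and hence that $\wt{\mc{R}}$ is at least quadratic in $(\overline\kappa,\kappa_0)$ with a matching Lipschitz bound, the $s$-derivative losses being absorbed by the smoothing of $\mc{L}$.

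For existence of the periodic orbit, note that by Lemma \ref{lem:wellpo_force} the solution with data $\norm{\overline\kappa_{\rm in}}_{L^2}\le R$ exists on $[0,T]$ and lies in $C((0,T];H^1)$, so the time-$T$ map $P:\overline\kappa_{\rm in}\mapsto\overline\kappa(\cdot,T)$ is well defined on $B_R=\{\norm{\,\cdot\,}_{L^2}\le R\}$, and a $T$-periodic mild solution is precisely a fixed point of $P$. Using the linear bounds together with $\int_0^T\norm{\dot\kappa_0}_{L^2}\,dt\le\varepsilon$ and the quadratic/Lipschitz structure above, one obtains
\[
\norm{P(\overline\kappa_{\rm in})}_{L^2}\le e^{-T\lambda_1}\norm{\overline\kappa_{\rm in}}_{L^2}+C(T)\varepsilon+C(T)\varepsilon R,\qquad
\norm{P(\overline\kappa_{\rm in})-P(\overline\kappa_{\rm in}')}_{L^2}\le\big(e^{-T\lambda_1}+C(T)\varepsilon\big)\norm{\overline\kappa_{\rm in}-\overline\kappa_{\rm in}'}_{L^2}.
\]
Choosing $R=c(T)\varepsilon$ and $\varepsilon$ small (which fixes $c_0$, $c_1$, $c(T)$), $P$ maps $B_R$ into itself and is a strict contraction there; its unique fixed point, extended $T$-periodically, is the desired $\overline\kappa\in C([0,T];H^1)$, and \eqref{kappaH1} is the a priori $H^1$ bound on $B_R$. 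Uniqueness among \emph{all} $T$-periodic mild solutions follows because the same estimates force any such solution to be $O(\varepsilon)$-small, hence to lie in $B_R$.

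For the convergence \eqref{per_conv}, let $\overline\varphi$ solve \eqref{theta_eqn}--\eqref{BCs} with data $\overline\varphi_{\rm in}$, $\norm{\overline\varphi_{\rm in}}_{L^2}\le c_2\varepsilon$; a continuity argument (using that $\kappa_0$ and the periodic solution $\overline\kappa$ are uniformly $O(\varepsilon)$ in $H^1$, together with the small-data theory behind Lemma \ref{lem:wellpo_force}) shows $\overline\varphi$ exists for all $t\ge0$ and stays $O(\varepsilon)$-small in $H^1$. The difference $w=\overline\kappa-\overline\varphi$ then solves $\dot w+\mc{L}w=(\wt{\mc{R}}[\overline\kappa]-\wt{\mc{R}}[\overline\varphi])_s$ with clamped boundary conditions --- the forcing $\dot\kappa_0$ cancels --- and $\wt{\mc{R}}[\overline\kappa]-\wt{\mc{R}}[\overline\varphi]$ is controlled by $C\varepsilon$ times a first-order norm of $w$, uniformly in $t$. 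I would estimate the Duhamel formula for $w$ in the exponentially and time-weighted norm $M=\sup_{t\ge0}e^{t\lambda_1}\big(\norm{w(t)}_{L^2}+\min\{t^{1/4},1\}\norm{w(t)}_{\dot H^1}\big)$; because $\int_0^\infty s^{-1/4}e^{-s\lambda_1}\,ds<\infty$ the convolution closes globally, giving $M\le C\norm{w(0)}_{L^2}+C\varepsilon M$, hence $M\le C\norm{\overline\kappa_{\rm in}-\overline\varphi_{\rm in}}_{L^2}$ for $\varepsilon$ small, and evaluating at $t=nT$ yields \eqref{per_conv}. The hardest part is this global estimate: one must control the tension difference through the $\overline\kappa$-dependent elliptic problem, absorb the derivative losses in $\wt{\mc{R}}$ via the parabolic smoothing, and keep $\overline\varphi$ uniformly small all at once. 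A naive period-by-period iteration of the one-period bound $\norm{w(T)}_{L^2}\le(e^{-T\lambda_1}+C\varepsilon)\norm{w(0)}_{L^2}$ would accumulate a factor $(1+C\varepsilon)^n$ and miss the clean rate $e^{-nT\lambda_1}$, so the convolution must be closed in one stroke using the spectral gap of $\mc{L}$.
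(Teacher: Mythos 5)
Your existence argument---realizing the periodic solution as the unique fixed point of the time-$T$ map on a small $L^2$ ball, with invariance and Lipschitz bounds supplied by the one-period Duhamel estimates behind Lemma \ref{lem:wellpo_force}---is essentially the paper's proof of the first part of the theorem and is fine. (One small overstatement: with $\overline\kappa\in H^1$ the right side of \eqref{T_eqn} contains second derivatives of quadratic quantities, so the tension is only controlled in $H^1$ as in \eqref{Tf_bound}, not $H^2$; this does not affect your argument since only $H^1$ control is needed.)

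The genuine gap is in your route to \eqref{per_conv}. You propose to close the Duhamel estimate for $w=\overline\kappa-\overline\varphi$ \emph{globally in time} in the weighted norm $M=\sup_{t\ge0}e^{t\lambda_1}\big(\norm{w}_{L^2}+\min\{t^{1/4},1\}\norm{w}_{\dot H^1}\big)$, asserting that the convolution closes because $\int_0^\infty s^{-1/4}e^{-s\lambda_1}ds<\infty$. But that is not the integral that appears. The Lipschitz bound for the nonlinearity has the form $\norm{\wt{\mc{R}}[\overline\kappa]-\wt{\mc{R}}[\overline\varphi]}_{L^2}\lesssim \varepsilon^2\big(\norm{w}_{L^2}+\norm{w}_{\dot H^1}\big)$, where the prefactor $\varepsilon^2$ comes from the background fields $\overline\kappa,\overline\varphi,\kappa_0$, which are uniformly small but do \emph{not} decay in time (they are periodic or converging to a periodic orbit). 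Inserting $\norm{w(t')}\le e^{-t'\lambda_1}\max\{(t')^{-1/4},1\}M$ together with the semigroup factor $\max\{(t-t')^{-1/2},1\}e^{-(t-t')\lambda_1}$ and multiplying by the weight $e^{t\lambda_1}$, the exponentials cancel exactly, leaving $\varepsilon^2 M\int_0^t\max\{(t-t')^{-1/2},1\}\max\{(t')^{-1/4},1\}\,dt'$, which grows linearly in $t$. The resulting inequality $M\le C\norm{w(0)}_{L^2}+C\varepsilon^2\,t\,M$ does not close on $[0,\infty)$. (It would close with a weight $e^{t\mu}$ for any $\mu<\lambda_1$, since then a factor $e^{-(t-t')(\lambda_1-\mu)}$ survives and is integrable, but that only yields the slower rate $\mu$.) The paper sidesteps this exactly where you anticipated trouble, but in the opposite way: it performs the weighted absorption only on the \emph{finite} horizon $[0,T]$, where the convolution constant is $c(T)\varepsilon^2\le\tfrac12$ and can be subtracted from both sides, giving the single-period bound $\norm{\Phi^T[\overline\kappa_{\rm in}]-\Phi^T[\overline\varphi_{\rm in}]}_{L^2}\le ce^{-T\lambda_1}\norm{\overline\kappa_{\rm in}-\overline\varphi_{\rm in}}_{L^2}$ of \eqref{periodic_contraction}---note the per-period factor is $ce^{-T\lambda_1}$, not $e^{-T\lambda_1}+C\varepsilon$ as in your unweighted version---and then iterates the time-$T$ map $n$ times to obtain \eqref{per_conv}. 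To repair your argument, either adopt this period-by-period iteration of the weighted one-period estimate, or run your global estimate with a strictly smaller decay rate $\mu<\lambda_1$.
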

The proof of Theorem \ref{thm:periodic} appears in Section \ref{subsec:unique_per}.
Note that by \eqref{per_conv}, nearby solutions converge very rapidly to the periodic solution. We consider this periodic solution in all further analysis and use it to calculate the fiber swimming speed. To do so, we first must recover the equations for the actual translational motion of the fiber.  

Given the unique periodic solution $(\overline\kappa,\overline\tau)=(\kappa-\kappa_0,\tau+\kappa_0^2)$ to \eqref{theta_eqn}--\eqref{BCs} and the initial tangent angle $\theta_{\rm in}$, we may uniquely calculate $\theta$ via the equation \eqref{thetadot} for $\dot\theta$. From \eqref{thetadot} we may also solve for the frame evolution: 
\begin{equation}\label{frame_ev}
\dot\be_{\rm t}(s,t) = \dot\theta(s,t) \be_{\rm n}(s,t)\,, \qquad \dot\be_{\rm n}(s,t)=-\dot\theta(s,t) \be_{\rm t}(s,t)\,.
\end{equation}

We may then obtain the full evolution of the fiber via 
\begin{equation}\label{fiber_move}
\frac{\p\X}{\p t}(s,t)
= \big(-(\kappa-\kappa_0)_{ss} +\kappa^3 +\tau\kappa\big) \be_{\rm n} +(1+\gamma)\big(\tau_s+ 2\kappa\kappa_s +\kappa(\kappa-\kappa_0)_s \big)\be_{\rm t}  \,. 
\end{equation}

The swimming velocity of the fiber $\bm{V}(t)=\int_0^1\frac{\p\X}{\p t}(s,t)\,ds$ may be calculated as
\begin{equation}\label{basepoint}
\bm{V}(t) = -\gamma\int_0^1 \bigg(\big(\overline\kappa\overline\kappa_s-\overline\tau_s-\kappa_0\overline\kappa_s\big)\be_{\rm t} +2\overline\kappa\kappa^2\be_{\rm n}\bigg) \, ds \, .
\end{equation}
(See Section \ref{subsec:swim}, equation \eqref{swim_calc} for the full calculation). 

For suitably small $\kappa_0$, we seek an expression for the average swimming speed of the fiber in the direction $\be_{\rm t}(0,0)$ over the course of one period $0\le t\le T$. In what follows, for any function $h(t)$, we will use the notation
\begin{equation}
\langle h \rangle := \frac{1}{T}\int_0^T h(t)\, dt
\end{equation}
to denote the time average over one period. We show the following.

\begin{theorem}[Small amplitude swimming]\label{thm:swimming}
Suppose that $\kappa_0(s,t)\in C^1([0,T]; H^3(I))$ is $T$-periodic in time and satisfies 
\begin{align*}
T\bigg(\sup_{0\le t\le T}\norm{\dot\kappa_0}_{L^2(I)}\bigg)= \varepsilon\le c_0\,, \quad \sup_{0\le t\le T}\norm{\kappa_0}_{H^1}=c_1\varepsilon\le c_0\,. 
\end{align*} 
Then a filament whose curvature evolution satisfies \eqref{theta_eqn}-\eqref{BCs} swims with velocity 
\begin{equation}
\bm{V}(t)=U(t)\, \be_{\rm t}(0,0) + \bm{r}_{\rm v}(t)\,,
\end{equation}
where $\sup_{0\le t\le T}\abs{U}\le c\varepsilon^2$ and $\sup_{0\le t\le T}\abs{\bm{r}_{\rm v}}\le c\varepsilon^3$, and $U$ is given by
\begin{equation}\label{swim_speed1}
U(t) = -\gamma\int_0^1 (\kappa_0)_s\overline\kappa \, ds\,.
\end{equation}

Writing $\kappa_0(s,t)=\sum_{m,k=1}^\infty\big(a_{m,k}\cos(\omega m t)-b_{m,k}\sin(\omega m t)\big)\psi_k(s)$ where $\psi_k(s)$ are the eigenfunctions of the linearized operator $\mc{L}$ (see \eqref{L_efuns}) and $\omega=\frac{2\pi}{T}$, we have that the time-averaged speed $\langle U\rangle$ may be written
\begin{equation}\label{swim_speed2}
\begin{aligned}
\langle U\rangle&= \frac{\gamma}{2}\sum_{m,k,\ell=1}^\infty\frac{\omega^2 m^2}{\omega^2m^2+\lambda_k^2} \bigg(\frac{\lambda_k}{\omega m}\big(a_{m,k}b_{m,\ell}-b_{m,k}a_{m,\ell} \big) \\
&\hspace{5cm}+ a_{m,k}a_{m,\ell}+b_{m,k}b_{m,\ell} \bigg) \int_0^1\psi_k(\psi_\ell)_s\,ds \, + r_{\rm u}\,,
\end{aligned}
\end{equation}
where $\lambda_k$ are the eigenvalues of $\mc{L}$ and $\abs{r_{\rm u}}\le c\varepsilon^3$.
\end{theorem}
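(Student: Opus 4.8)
The plan is to work from the swimming velocity formula \eqref{basepoint} and the periodic solution $\overline\kappa$ from Theorem \ref{thm:periodic}, extracting the leading-order ($O(\varepsilon^2)$) term in the $\be_{\rm t}(0,0)$ direction and relegating everything else to the error terms $\bm{r}_{\rm v}$, $r_{\rm u}$. The key scaling observation is the bound \eqref{kappaH1}: $\overline\kappa = O(\varepsilon)$ in $H^1$, and by hypothesis $\kappa_0 = O(\varepsilon)$ in $H^1$ (indeed $H^3$ here), so $\kappa = \overline\kappa + \kappa_0 = O(\varepsilon)$ as well. Plugging into \eqref{basepoint}, every term of the form $\overline\kappa\overline\kappa_s$, $\kappa_0\overline\kappa_s$, $\overline\kappa\kappa^2$ is $O(\varepsilon^2)$, while the $\overline\tau_s$ term needs the tension equation: from \eqref{T_eqn} with zero boundary data, $\overline\tau$ is controlled by the right-hand side, which is quadratic in $(\overline\kappa,\kappa_0)$, so $\overline\tau = O(\varepsilon^2)$. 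The normal component $2\overline\kappa\kappa^2 \be_{\rm n}$ is actually $O(\varepsilon^3)$. Projecting onto $\be_{\rm t}(0,0)$: since $\be_{\rm t}(s,t) = \be_{\rm t}(0,0) + O(\theta) $ and the tangent angle deviation $\theta - \theta_{\rm in}$ will also be shown (or assumed, via the analysis of \eqref{thetadot}) to be $O(\varepsilon)$, the leading contribution to $U$ comes from replacing $\be_{\rm t}$, $\be_{\rm n}$ by their $t=0$ values up to $O(\varepsilon^3)$ corrections, and then the only genuinely $O(\varepsilon^2)$ surviving piece of the $\be_{\rm t}$-component of \eqref{basepoint} is $-\gamma\int_0^1(\overline\kappa\overline\kappa_s - \overline\tau_s - \kappa_0\overline\kappa_s)\,ds$; the $\overline\kappa\overline\kappa_s$ and $\overline\tau_s$ terms integrate to boundary terms which vanish by \eqref{BCs}, leaving $U(t) = \gamma\int_0^1\kappa_0\overline\kappa_s\,ds = -\gamma\int_0^1(\kappa_0)_s\overline\kappa\,ds$ after integration by parts (using $\kappa_0\overline\kappa|_{0,1}=0$), which is \eqref{swim_speed1}. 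I would need to track carefully that the replacement of the moving frame by the fixed frame and the neglect of $\overline\tau_s$'s pairing with the frame deviation are genuinely $O(\varepsilon^3)$; this is the first bookkeeping obstacle.

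For \eqref{swim_speed2}, the plan is to solve for $\overline\kappa$ at leading order. At $O(\varepsilon)$, equation \eqref{theta_eqn} linearizes to $\dot{\overline\kappa} + \mc{L}[\overline\kappa] = -\dot\kappa_0$ (all the bracketed terms are at least quadratic in $(\overline\kappa,\kappa_0)$, hence $O(\varepsilon^2)$, and the boundary conditions \eqref{BCs} match the domain of $\mc{L}$). Expanding in the eigenbasis $\{\psi_k\}$ of $\mc{L}$ with eigenvalues $\lambda_k$, write $\overline\kappa(s,t) = \sum_k \overline\kappa_k(t)\psi_k(s)$ and $\kappa_0(s,t) = \sum_{m,k}(a_{m,k}\cos(\omega m t) - b_{m,k}\sin(\omega m t))\psi_k(s)$. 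Then each mode satisfies the scalar ODE $\dot{\overline\kappa}_k + \lambda_k\overline\kappa_k = -\dot{(\kappa_0)}_k + O(\varepsilon^2)$, and the unique $T$-periodic solution is obtained mode-by-mode and frequency-by-frequency: for the $(m,k)$ Fourier-in-time component, solving $\dot y + \lambda_k y = \omega m(a_{m,k}\sin(\omega mt) + b_{m,k}\cos(\omega mt))$ gives $\overline\kappa_k$'s $\cos(\omega mt)$ and $\sin(\omega m t)$ coefficients as explicit rational combinations of $a_{m,k}, b_{m,k}$ with denominator $\omega^2m^2 + \lambda_k^2$. Substituting this leading-order $\overline\kappa$ into $U(t) = -\gamma\int_0^1(\kappa_0)_s\overline\kappa\,ds = \gamma\int_0^1\kappa_0\overline\kappa_s\,ds$ (the second form after the usual integration by parts, noting $\int_0^1\psi_k(\psi_\ell)_s\,ds$ appears) and time-averaging, the product of two Fourier-in-time series collapses: only matching temporal frequencies survive $\langle\cdot\rangle$, which forces the same $m$ on both factors, and the double angle/product-to-sum identities produce exactly the combination $\frac{\lambda_k}{\omega m}(a_{m,k}b_{m,\ell} - b_{m,k}a_{m,\ell}) + a_{m,k}a_{m,\ell} + b_{m,k}b_{m,\ell}$ weighted by $\frac{\omega^2m^2}{\omega^2m^2+\lambda_k^2}$, summed against $\int_0^1\psi_k(\psi_\ell)_s\,ds$. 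The $O(\varepsilon^2)$ corrections to $\overline\kappa$ contribute $O(\varepsilon^3)$ to $\langle U\rangle$ and go into $r_{\rm u}$.

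The main obstacle is the error analysis rather than the formal computation: I must justify that (i) the true periodic $\overline\kappa$ differs from the linear-in-$\varepsilon$ approximation above by $O(\varepsilon^2)$ in a norm strong enough (say $H^1$, using \eqref{kappaH1} and bootstrapping the Duhamel formula for \eqref{theta_eqn} with the quadratic nonlinearity) to control the resulting contribution to the spatial integrals in $U$; (ii) the tension $\overline\tau = O(\varepsilon^2)$ with the requisite regularity, via elliptic estimates on \eqref{T_eqn} — here the $H^3$ hypothesis on $\kappa_0$ (stronger than the $H^1$ needed in Theorem \ref{thm:periodic}) is presumably what buys enough smoothness for $\overline\tau_s$ and for the $\be_{\rm t}$-projection errors; and (iii) the moving-frame corrections $\be_{\rm t}(s,t) - \be_{\rm t}(0,0)$, governed by \eqref{frame_ev}–\eqref{thetadot}, are $O(\varepsilon)$ uniformly in time, so that pairing them against the $O(\varepsilon^2)$ integrand in \eqref{basepoint} yields $O(\varepsilon^3)$. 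Each of these is a fairly standard perturbative estimate, but assembling them so that every discarded term is provably $o(\varepsilon^2)$ — in particular handling the interplay between the tension estimate and the frame estimate — is where the real work lies; the uniform-in-time nature of the bounds (needed because we time-average over a full period) is the delicate point.
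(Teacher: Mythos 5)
Your proposal is correct and follows essentially the same route as the paper: project \eqref{basepoint} onto $\be_{\rm t}(0,0)$ using an $O(\varepsilon)$ frame estimate (the paper's Lemma \ref{lem:middle}, which is where the $H^3$ hypothesis is actually used, via $\kappa_{sss}$ in \eqref{thetadot}), kill the $\overline\kappa\overline\kappa_s$ and $\overline\tau_s$ terms by the boundary conditions, and then compare $\overline\kappa$ with the periodic solution of the linearized system $\dot{\overline\kappa}^{\rm lin}=-\overline\kappa^{\rm lin}_{ssss}-\dot\kappa_0$ solved mode-by-mode in the eigenbasis of $\mc{L}$ (the paper's Lemma \ref{lem:lin_close}, which in fact gives the difference as $O(\varepsilon^3)$ in $H^1$ since the remainder \eqref{Rf_def} is cubic, slightly better than the $O(\varepsilon^2)$ you claim but both suffice for $|r_{\rm u}|\le c\varepsilon^3$).
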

The proof of Theorem \ref{thm:swimming} is given in Section \ref{subsec:swim}. We note that the additional $H^3(I)$ regularity on $\kappa_0$ is used to ensure that the fiber frame $(\be_{\rm t},\be_{\rm n})$ is changing very little over time (see equation \eqref{frame_ev}). As in previous works (see, e.g., \cite{lauga2020fluid}), we obtain that an $O(\varepsilon)$ forcing results in propulsion at most at $O(\varepsilon^2)$. 
From \eqref{swim_speed1}, we see that propulsion occurs due to a lag between the actual and preferred curvature of the fiber, which in turn depends on the rate of change of the preferred curvature via \eqref{theta_eqn}.
However, from \eqref{swim_speed2}, we see that further conditions on $\kappa_0(s,t)$ are needed to actually generate net movement over the course of one full period. Observations about the swimming speed \eqref{swim_speed2} are made more precise in Section \ref{sec:obs}, but may be summarized as follows:

\begin{obs}\label{obs:obs} 
\begin{enumerate}
\item Forcing only the lowest nonzero temporal frequency appears to yield the furthest filament displacement for a given amount of work. Thus in further applications we consider the filament's preferred curvature to be of the form
\begin{equation}\label{kap0_form}
\kappa_0(s,t)=F_1(s)\cos(\omega\,t)+F_2(s)\sin(\omega\,t)\,.
\end{equation}
\item If $F_1$ and $F_2$ are both even or both odd about $s=\frac{1}{2}$, the integral $\int_0^1\psi_k(\psi_\ell)_s\,ds$ in \eqref{swim_speed2} vanishes and the filament does not swim to leading order. \\

\item If either $F_1=0$ or $F_2=0$, or if $F_1=\pm F_2$, then the  term proportional to $\big(a_{m,k}b_{m,\ell}-b_{m,k}a_{m,\ell} \big)$ in \eqref{swim_speed2} vanishes. The filament may still swim at leading order, but its displacement will be very small, due to the size of $\lambda_k$ (see \eqref{L_efuns}).
\end{enumerate}
\end{obs}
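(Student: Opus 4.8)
The target is the set of three observations about the swimming speed formula \eqref{swim_speed2}, so the plan is really to turn each qualitative claim into a short computation starting from the expressions \eqref{swim_speed1} and \eqref{swim_speed2}. Throughout I would use that $\psi_k$ are the Dirichlet/clamped eigenfunctions of $\mc{L}=\p_{ssss}$ on $I$, hence (up to normalization) explicit combinations of $\sin,\cos,\sinh,\cosh$ with eigenvalues $\lambda_k\sim (k\pi)^4$, and in particular that each $\psi_k$ has a definite parity about $s=1/2$ — alternating even/odd as $k$ increases — since the operator and boundary conditions are symmetric under $s\mapsto 1-s$.

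\textbf{Part (1): lowest temporal frequency is optimal.} Here I would not attempt a global optimization but rather read off the structure of \eqref{swim_speed2}. For a fixed ``budget'' I would use the natural measure of work for this overdamped system, namely the time-averaged rate of working against the fluid drag, which by the energy identity \eqref{Theta_energy} (its forced analogue) is comparable to $\langle \norm{\dot\X}_{L^2}^2\rangle$; in Fourier variables this introduces weights growing in $m$ (roughly like $\omega^2 m^2$ times the curvature amplitudes, plus higher-order-in-$\varepsilon$ corrections). Meanwhile the gain factor $\frac{\omega^2 m^2}{\omega^2 m^2+\lambda_k^2}$ multiplying the mode-$(m,k,\ell)$ contribution in \eqref{swim_speed2} is increasing in $m$ but bounded by $1$, so it saturates quickly, whereas the cost keeps growing. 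Comparing a single frequency $m$ at fixed amplitude to the same amplitude placed at $m=1$ shows the displacement-per-work ratio is maximized (to leading order) at the smallest admissible $m$, i.e. $m=1$; this justifies restricting to the ansatz \eqref{kap0_form} with $F_1=\sum_k a_{1,k}\psi_k$, $F_2=\sum_k b_{1,k}\psi_k$. I would phrase this as a heuristic/optimization observation (the paper already calls it ``appears to''), supplemented by the numerics referenced later.

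\textbf{Part (2): same-parity $F_1,F_2$ give no swimming to leading order.} If $F_1$ and $F_2$ are both even (resp. both odd) about $s=1/2$, then in the eigenfunction expansion $F_1=\sum a_{1,k}\psi_k$, $F_2=\sum b_{1,k}\psi_k$ the only nonzero coefficients $a_{1,k},b_{1,k}$ are those for which $\psi_k$ has the matching parity. The claim then reduces to: $\int_0^1 \psi_k (\psi_\ell)_s\,ds = 0$ whenever $\psi_k$ and $\psi_\ell$ have the same parity. Indeed, if both are even then $\psi_k\psi_\ell$ is even, so $(\psi_k\psi_\ell)_s=2\psi_k(\psi_\ell)_s$ would need... more carefully: $\psi_k (\psi_\ell)_s$ is the product of an even function and an odd function (the derivative of an even function is odd), hence odd about $s=1/2$, so its integral over $[0,1]$ vanishes; the both-odd case is identical since odd times even-derivative... wait, derivative of an odd function is even, so $\psi_k(\psi_\ell)_s$ is odd$\times$even $=$ odd again, integral zero. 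Plugging this into every term of \eqref{swim_speed2} kills the entire double sum, leaving only $r_{\rm u}=O(\varepsilon^3)$, i.e. no swimming to leading (quadratic) order. The one routine point to check is that the change of variables $s\mapsto 1-s$ indeed leaves the clamped biharmonic eigenproblem invariant so that each $\psi_k$ can be taken to have definite parity — this is immediate from the explicit form of the eigenfunctions.

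\textbf{Part (3): degenerate forcing kills the antisymmetric term.} The coefficient of the ``$\lambda_k/(\omega m)$'' term in \eqref{swim_speed2} is $a_{m,k}b_{m,\ell}-b_{m,k}a_{m,\ell}$, which is the $(k,\ell)$ entry of the antisymmetrized outer product of the vectors $a_{m,\cdot}$ and $b_{m,\cdot}$. If $F_2=0$ then all $b_{m,\ell}=0$; if $F_1=0$ then all $a_{m,k}=0$; and if $F_1=\pm F_2$ then $a_{m,k}=\pm b_{m,k}$ for all $k$, so $a_{m,k}b_{m,\ell}-b_{m,k}a_{m,\ell}=\pm(b_{m,k}b_{m,\ell}-b_{m,k}b_{m,\ell})=0$. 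In every case the antisymmetric piece vanishes termwise and only the symmetric combination $a_{m,k}a_{m,\ell}+b_{m,k}b_{m,\ell}$ survives in \eqref{swim_speed2}. Since that surviving term is not multiplied by the large factor $\lambda_k/(\omega m)$ (recall $\lambda_k\sim(k\pi)^4$ is large), the residual swimming speed is generically much smaller than when a genuine phase difference between $F_1$ and $F_2$ is present — this is the ``very small displacement'' assertion, which I would quantify simply by noting the missing $\lambda_k/(\omega m)$ gain. The main (and only real) obstacle across all three parts is setting up a clean, defensible notion of ``work'' in Part (1) so that the claim ``lowest frequency is best'' is actually a statement rather than a slogan; Parts (2)–(3) are short parity/algebra arguments once the eigenfunction symmetry is recorded.
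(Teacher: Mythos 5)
Your treatments of parts (2) and (3) coincide with the paper's. For (2), the paper records exactly the parity facts you derive: under $s\mapsto 1-s$ the clamped biharmonic eigenproblem is invariant, $\psi_{2k-1}$ is even and $\psi_{2k}$ is odd about $s=\tfrac12$, and $\int_0^1\psi_k(\psi_\ell)_s\,ds=0$ whenever $\psi_k,\psi_\ell$ share a parity (equation \eqref{vanish_int}); your "even times odd is odd" argument, after your mid-stream correction, is the same computation. For (3), the termwise vanishing of $a_{m,k}b_{m,\ell}-b_{m,k}a_{m,\ell}$ and the remark that the surviving symmetric term lacks the $\lambda_k/(\omega m)$ amplification are exactly the paper's reasoning (one small correction: $\xi_k\to\tfrac{(2k+1)\pi}{2}$, so $\lambda_k\sim\big(\tfrac{(2k+1)\pi}{2}\big)^4$, not $(k\pi)^4$).

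For part (1) the routes diverge, and your heuristic as stated does not go through. The paper makes no analytical claim here: it computes the time-averaged work $W^{\rm lin}=\sum_{m,k}\tfrac{\lambda_k}{2}\tfrac{\omega^2m^2}{\omega^2m^2+\lambda_k^2}(a_{m,k}^2+b_{m,k}^2)$ from \eqref{work} and then solves the constrained optimization \eqref{optimization} numerically with \texttt{fmincon}, observing that the optimum concentrates on $m=1$. Your proposed argument — "the gain factor saturates quickly whereas the cost keeps growing" — is incorrect in the relevant regime: since $\lambda_1\approx 500$ and $\omega=2\pi$, one has $\omega m\ll\lambda_k$ for all moderate $m$, so the symmetric gain factor $\tfrac{\omega^2m^2}{\omega^2m^2+\lambda_k^2}\approx\tfrac{\omega^2m^2}{\lambda_k^2}$ and the work weight $\approx\tfrac{\omega^2m^2}{2\lambda_k}$ both grow like $m^2$, and their ratio is $m$-independent. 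The $m=1$ preference actually comes from the antisymmetric (dominant) term, whose coefficient $\tfrac{\omega m\,\lambda_k}{\omega^2m^2+\lambda_k^2}\approx\tfrac{\omega m}{\lambda_k}$ grows only linearly in $m$, so its displacement-per-work ratio scales like $2/(\omega m)$ and is maximized at $m=1$. If you want an analytic surrogate for the paper's numerics, that ratio computation is the one to make; as written, your comparison would not justify the claim even heuristically.
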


Observation (1), which was also noted in \cite{lauga2007floppy}, comes from numerically solving a simple constrained optimization problem (see \eqref{optimization}). The requirements on $F_1$ and $F_2$ may be considered in terms of the Scallop Theorem for a swimmer in a linear, time-independent flow: violating observation (2) can be seen numerically to lead to a time-reversible filament deformation (see Section \ref{subsec:swim_num}), while violating (3) gives rise to a nearly time-reversible fiber deformation. 

A waveform fitting the above criteria is a traveling wave propagating along the filament; for example, $F_1=\sin(\omega s)$ and $F_2=\cos(\omega s)$. This is consistent with classical calculations by Pironneau and Katz \cite{pironneau1974optimal} which find that a filament should deform as a traveling wave to reach a given swimming speed while minimizing energy expenditure.

A further small numerical optimization problem (see \eqref{optimization2}) where $\norm{\kappa_0}_{L^2}$ is constrained as well yields a $\kappa_0$ \eqref{kap0_form} in the form of a traveling wave which outperforms the simple prescription $F_1=\sin(\omega s)$ and $F_2=\cos(\omega s)$. A notable feature of this optimal $\kappa_0$ is that $F_1$ and $F_2$ look like $\sin(\omega s)$ and $\cos(\omega s)$ away from the filament ends, but $F_1=F_2=0$ and $F_1'=F_2'=0$ at $s=0,1$. This is reminiscent of the results of \cite{neal2020doing}, which find that an inactive flagellar endpiece enhances swimming movement.

The above observations are explored in more detail in Section \ref{sec:obs}, and are further supplemented with numerical simulations in Section \ref{sec:numerics}. We implement a reformulated version of equation \eqref{classical} using a novel combination of the methods proposed by Moreau et al. \cite{moreau2018asymptotic} and Maxian et al. \cite{maxian2021integral}, which we validate against a direct discretization of \eqref{classical} (see Appendix \ref{app:num}). Using the observations surrounding the form \eqref{kap0_form} of $\kappa_0$, we consider combinations of $F_1(s)$ and $F_2(s)$ that give rise to non-swimmers, bad swimmers, and good swimmers, and create visualizations of their respective positions over time.

\section{Well-posedness: local for large data, global for small data}\label{sec:wellpo}

\subsection{Preliminaries}
We begin by stating a few useful lemmas. 
 
\begin{lemma}[Gagliardo--Nirenberg inequality]\label{lem:GN}
For any function $u\in H^1(I)$, the following inequality holds for each $2\le p\le\infty$:
\begin{align*}
\norm{u}_{L^p(I)}&\le c\norm{u}_{H^1(I)}^{\frac{p-2}{2p}}\norm{u}_{L^2(I)}^{\frac{p+2}{2p}} \,.
\end{align*}
 \end{lemma}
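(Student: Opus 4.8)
The plan is to reduce everything to the endpoint case $p=\infty$ and then interpolate by an elementary $L^p$--$L^\infty$--$L^2$ estimate; since we are in one space dimension on a bounded interval, no deep machinery is needed, and one could in fact just cite the classical Gagliardo--Nirenberg interpolation theorem directly. I will instead give the self-contained argument.

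\emph{Step 1: the endpoint $p=\infty$.} By density it suffices to treat smooth $u$. For any $x,y\in I$, the fundamental theorem of calculus applied to $u^2$ gives $u(x)^2=u(y)^2+2\int_y^x u u'\,dt$. The one idea here — needed because $u$ satisfies no boundary condition forcing it to vanish somewhere on $I$ — is to average this identity over $y\in I$, which has unit length:
\[
u(x)^2 = \int_0^1 u(y)^2\,dy + 2\int_0^1\!\!\int_y^x u u'\,dt\,dy \le \norm{u}_{L^2(I)}^2 + 2\int_0^1 \abs{u}\,\abs{u'}\,dt.
\]
By Cauchy--Schwarz and the trivial bounds $\norm{u}_{L^2(I)}\le\norm{u}_{H^1(I)}$, $\norm{u'}_{L^2(I)}\le\norm{u}_{H^1(I)}$, taking the supremum over $x$ yields $\norm{u}_{L^\infty(I)}^2\le 3\,\norm{u}_{L^2(I)}\norm{u}_{H^1(I)}$, which is exactly the claimed inequality in the limit $p\to\infty$, where both exponents $\frac{p-2}{2p},\frac{p+2}{2p}$ tend to $\frac12$.

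\emph{Step 2: interpolation for $2\le p<\infty$.} Writing $\abs{u}^p=\abs{u}^{p-2}\abs{u}^2\le\norm{u}_{L^\infty(I)}^{\,p-2}\abs{u}^2$ and integrating over $I$ gives $\norm{u}_{L^p(I)}^p\le\norm{u}_{L^\infty(I)}^{\,p-2}\norm{u}_{L^2(I)}^2$, i.e. $\norm{u}_{L^p(I)}\le\norm{u}_{L^\infty(I)}^{(p-2)/p}\norm{u}_{L^2(I)}^{2/p}$. Substituting the Step 1 bound and using $\frac{p-2}{2p}+\frac{2}{p}=\frac{p+2}{2p}$ gives $\norm{u}_{L^p(I)}\le 3^{(p-2)/(2p)}\norm{u}_{H^1(I)}^{(p-2)/(2p)}\norm{u}_{L^2(I)}^{(p+2)/(2p)}$, and since $3^{(p-2)/(2p)}\le\sqrt3$ for all $p\ge2$ we may take $c=\sqrt3$ (the case $p=2$ being trivial).

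\emph{Expected main obstacle.} There is essentially none; this is a standard inequality, and the only step that is not pure bookkeeping is the averaging trick in Step 1 that manufactures the $\norm{u}_{L^2(I)}$ factor on the right-hand side.
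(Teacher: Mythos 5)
Your proof is correct. The paper does not actually supply a proof of this lemma -- it is stated as a classical fact -- so there is nothing to compare against; your self-contained argument (averaging the identity $u(x)^2=u(y)^2+2\int_y^x uu'\,dt$ over $y\in I$ to get the $L^\infty$ endpoint without boundary conditions, then interpolating $L^p$ between $L^\infty$ and $L^2$) is a standard and complete derivation, and the exponent bookkeeping $\frac{p-2}{2p}+\frac{2}{p}=\frac{p+2}{2p}$ checks out. One minor remark: the sentence following the lemma in the paper (that $\norm{u}_{H^1(I)}$ may be replaced by $\norm{u}_{\dot H^1(I)}$ when $u\in H^1_0(I)$) is not covered by your argument as written, but it follows by the same method using $u(y_0)=0$ at an endpoint in place of the averaging step; since that claim is not part of the stated lemma, no change is needed.
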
 
If $u\in H^1_0(I)$, then we may replace $\norm{u}_{H^1(I)}$ in the above estimate with $\norm{u}_{\dot H^1(I)}$.

In addition to Lemma \ref{lem:GN}, we will need estimates for the semigroup generated by the operator $\mc{L}$ \eqref{L_op}. We first state some properties of the well-studied Dirichlet problem for the bilaplace equation in 1D
(see \cite{landau1986theory, wiggins1998flexive, wiggins1998trapping}). In particular, we have explicit expressions for the eigenfunctions and eigenvalues of $\mc{L}$. First, define an increasing sequence $\{\xi_k\}$ satisfying
\begin{equation}\label{eigenvals}
\cos(\xi_k)\cosh(\xi_k)=1\,, \qquad \xi_0=0\,.
\end{equation}
The first few values of $\xi_k$ are given by $\xi_1\approx 4.730$, $\xi_2\approx7.853$, $\xi_3\approx10.996$.
Note that since $\cosh(\xi)$ increases exponentially as $\abs{\xi}$ increases, we have $\xi_k\to\frac{(2k+1)\pi}{2}$ as $k\to\infty$. The eigenfunctions $\psi_k$ and eigenvalues $\lambda_k$ of $\mc{L}$ are given by 
\begin{equation}\label{L_efuns}
\begin{aligned}
\psi_k(s) &= \frac{\wh\psi_k(s)}{\|\wh\psi_k\|_{L^2}}\,, \qquad \lambda_k = \xi_k^4\,, \qquad k=1,2,\dots\\
\text{where } \wh\psi_k(s) &= \left(\cos(\xi_k)-\cosh(\xi_k)\right) \left(\cos(\xi_k s)- \cosh(\xi_k s) \right) \\
&\qquad + \left(\sin(\xi_k) +\sinh(\xi_k)\right) \left(\sin(\xi_k s)-\sinh(\xi_k s)\right)\, .
\end{aligned}
\end{equation}
We note in particular that the smallest eigenvalue of $\mc{L}$ is given by $\lambda_1\approx(4.73)^4\approx 500$. The first six eigenfunctions $\psi_k$ are plotted in Figure \ref{fig:eigenfunctions}. Note that from \eqref{L_efuns}, for odd $k$, $\psi_k(s)$ is even about $s=\frac{1}{2}$, while for even $k$, $\psi_k(s)$ is odd about $s=\frac{1}{2}$. 

\begin{figure}[!ht]
\centering
\includegraphics[scale=0.2]{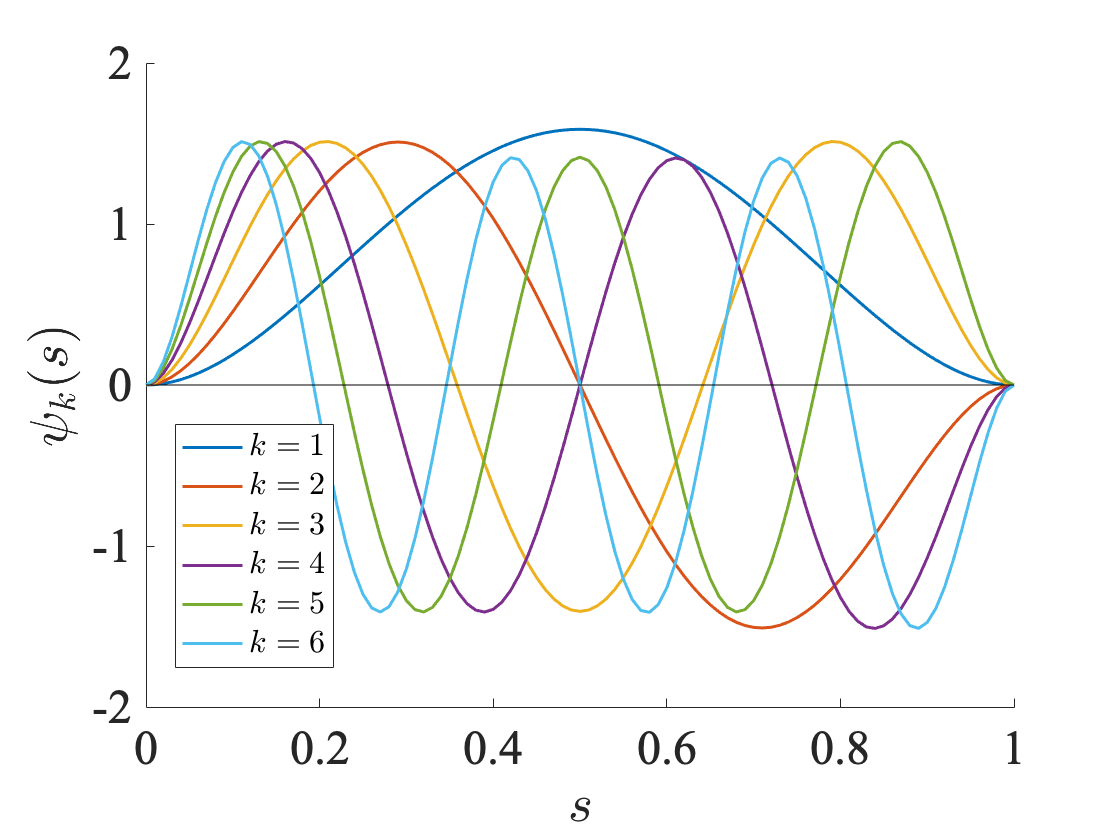}
\caption{The first six eigenfunctions of $\mc{L}$.}
\label{fig:eigenfunctions}
\end{figure}

Using \eqref{L_efuns}, we may define the domain of $\mc{L}$ by
\begin{equation}\label{domainL}
D(\mc{L}) = \bigg\{ u\in L^2(I) \;:\; u =\sum_k a_k\psi_k, \; a_k = \int_0^1u(s)\psi_k(s)ds; \; \sum_k\lambda_k^2a_k^2 <\infty \bigg\}\,.
\end{equation}

Furthermore, following \cite[Chapter 3.7]{sell2002dynamics}, we may define the domain of fractional powers $\mc{L}^r$, $0\le r\le1$, by  
\begin{equation}\label{domainLr}
D(\mc{L}^r) = \bigg\{ u\in L^2(I) \;:\; u =\sum_k a_k\psi_k, \; a_k = \int_0^1u(s)\psi_k(s)ds; \; \sum_k\lambda_k^{2r}a_k^2 <\infty \bigg\}\,.
\end{equation}

Noting the quartic growth of $\lambda_k=\xi_k^4$ as $k\to\infty$, we have the following inclusions:
\begin{equation}\label{inclusions}
D(\mc{L}^r)\subseteq H^{4r}(I)\,, \quad 0\le r \le 1\,; \quad D(\mc{L}^0)= L^2(I)\,.
\end{equation}

We will repeatedly use the following lemma.
\begin{lemma}[Semigroup estimates]\label{lem:smoothing}
For $0\le m+j\le 4$ we have
\begin{equation}
\norm{e^{-t\mc{L}}(\p_s^j u)}_{\dot H^m} \le c \max\{t^{-(m+j)/4},1\}e^{-t\lambda_1}\norm{u}_{L^2}\,,
\end{equation}
where $\lambda_1=\xi_1^4$ is the first eigenvalue of the operator $\mc{L}$ \eqref{L_efuns}. 
\end{lemma}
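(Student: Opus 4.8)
The plan is to work spectrally, expanding everything in the orthonormal eigenbasis $\{\psi_k\}$ of $\mc{L}$ from \eqref{L_efuns}. Write $u = \sum_k a_k \psi_k$ with $a_k = \int_0^1 u\,\psi_k\,ds$ and $\sum_k a_k^2 = \norm{u}_{L^2}^2$. Since $\mc{L}$ is self-adjoint with eigenvalues $\lambda_k = \xi_k^4$, the semigroup acts diagonally: $e^{-t\mc{L}}(\p_s^j u) = \sum_k a_k\, e^{-t\lambda_k}\, \p_s^j \psi_k$. The first step is then to reduce the $\dot H^m$ norm of this sum to a weighted $\ell^2$ sum of the $a_k$. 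Because $\p_s^j\psi_k$ and its first $m$ derivatives are again smooth functions, I would establish the pointwise/Sobolev bound $\norm{\p_s^{m+j}\psi_k}_{L^2} \le c\,\xi_k^{m+j} = c\,\lambda_k^{(m+j)/4}$ for $0 \le m+j \le 4$, using the explicit trigonometric/hyperbolic form of $\wh\psi_k$ in \eqref{L_efuns} together with the normalization $\psi_k = \wh\psi_k/\norm{\wh\psi_k}_{L^2}$ (one checks $\norm{\wh\psi_k}_{L^2}$ is bounded below by a positive constant uniformly in $k$, which follows from the asymptotics $\xi_k \to (2k+1)\pi/2$ and the dominance of the $\sinh$ terms). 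This gives $\norm{e^{-t\mc{L}}(\p_s^j u)}_{\dot H^m}^2 \le c \sum_k \lambda_k^{(m+j)/2}\, e^{-2t\lambda_k}\, a_k^2$. A small care point: for $\dot H^m$ with $m\ge 1$ one should note the derivatives of $\psi_k$ need not satisfy the same boundary conditions, but since $\dot H^m$ is just the seminorm $\norm{\p_s^m \cdot}_{L^2}$ this is harmless.

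The second step is the scalar estimate $\lambda_k^{(m+j)/2}\, e^{-2t\lambda_k} \le c\,\max\{t^{-(m+j)/2}, 1\}\, e^{-t\lambda_k}$, i.e. for each $k$,
\begin{equation}
\lambda_k^{(m+j)/4}\, e^{-t\lambda_k/2} \le c\,\max\{t^{-(m+j)/4}, 1\}\,,\qquad t>0.
\end{equation}
This is elementary: the function $x \mapsto x^\alpha e^{-tx/2}$ on $x \ge \lambda_1 > 0$ is maximized either at an interior critical point $x = 2\alpha/t$ (giving the bound $(2\alpha/t)^\alpha e^{-\alpha} = c\,t^{-\alpha}$) or, when $2\alpha/t < \lambda_1$, at $x = \lambda_1$ (giving a constant bound), where $\alpha = (m+j)/4 \in [0,1]$. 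Combining with the leftover factor $e^{-t\lambda_k/2} \le e^{-t\lambda_1/2}$... — actually cleaner: split $e^{-t\lambda_k}$ into $e^{-t\lambda_k/2}\cdot e^{-t\lambda_k/2}$, absorb the first half into the polynomial factor to produce $\max\{t^{-(m+j)/4},1\}$, and keep $e^{-t\lambda_k/2} \le e^{-t\lambda_1/2}$ from the second half. (To land the stated exponent $\lambda_1$ rather than $\lambda_1/2$, one instead uses $e^{-t\lambda_k} = e^{-t\lambda_k(1-\delta)}e^{-t\lambda_k\delta}$ with $\delta$ small and absorbs the constant; this only affects the constant $c$, so I would just state it with $\lambda_1$ as in the claim, noting the constant depends on the split.) Feeding this back:
\begin{equation}
\norm{e^{-t\mc{L}}(\p_s^j u)}_{\dot H^m}^2 \le c\,\max\{t^{-(m+j)/2},1\}\, e^{-2t\lambda_1} \sum_k a_k^2 = c\,\max\{t^{-(m+j)/2},1\}\, e^{-2t\lambda_1}\norm{u}_{L^2}^2,
\end{equation}
and taking square roots gives the lemma.

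The main obstacle — really the only nonroutine point — is the uniform-in-$k$ comparison $\norm{\p_s^{m+j}\psi_k}_{L^2} \approx \lambda_k^{(m+j)/4}$, and in particular the uniform lower bound on the normalizing constant $\norm{\wh\psi_k}_{L^2}$. Everything else (the diagonal action of the semigroup, the scalar calculus estimate, Parseval) is standard. I expect this can be dispatched by an explicit computation: differentiating $\wh\psi_k$ in \eqref{L_efuns} simply multiplies each trigonometric or hyperbolic summand by a power of $\xi_k$ and possibly swaps $\cos\leftrightarrow\sin$, $\cosh\leftrightarrow\sinh$, so $\p_s^{m+j}\wh\psi_k = \xi_k^{m+j}\,(\text{expression of the same shape})$; integrating the square over $[0,1]$ and using $\cosh^2\xi_k, \sinh^2\xi_k \to \infty$ while $(\cos\xi_k - \cosh\xi_k)^2 + (\sin\xi_k + \sinh\xi_k)^2 \sim 2\cosh^2\xi_k$ pins down both $\norm{\wh\psi_k}_{L^2}$ and $\norm{\p_s^{m+j}\wh\psi_k}_{L^2}$ up to constants independent of $k$. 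Alternatively, one can cite the abstract inclusion $D(\mc{L}^{(m+j)/4}) \subseteq H^{m+j}(I)$ from \eqref{inclusions} together with $\norm{\p_s^{m+j}\psi_k}_{L^2} \le c\norm{\psi_k}_{H^{m+j}} \le c\norm{\mc{L}^{(m+j)/4}\psi_k}_{L^2} = c\lambda_k^{(m+j)/4}$, which bypasses the explicit computation entirely and is the route I would take to keep the proof short.
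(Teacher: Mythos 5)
Your proof has a genuine gap at its very first step. The identity $e^{-t\mc{L}}(\p_s^j u) = \sum_k a_k\, e^{-t\lambda_k}\, \p_s^j\psi_k$ with $a_k=(u,\psi_k)$ asserts that the semigroup commutes with $\p_s^j$. That is true on the torus, where $\p_s$ preserves the Fourier eigenbasis, but it is false here: $\p_s^j\psi_k$ is not an eigenfunction of the clamped bilaplacian (it does not even satisfy the boundary conditions in \eqref{L_op}). The correct spectral expansion is $e^{-t\mc{L}}(\p_s^j u)=\sum_k e^{-t\lambda_k}(\p_s^j u,\psi_k)\,\psi_k=\sum_k(-1)^j e^{-t\lambda_k}(u,\p_s^j\psi_k)\,\psi_k$ for $u\in C_c^\infty(I)$ — the $j$ derivatives land on $\psi_k$ \emph{inside the inner product}, while the functions being summed remain $\psi_k$ (or $\p_s^m\psi_k$ after taking the $\dot H^m$ norm). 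A second, related gap: your bound $\norm{\sum_k c_k\p_s^{m+j}\psi_k}_{L^2}^2\le c\sum_k|c_k|^2\lambda_k^{(m+j)/2}$ is a Bessel-type inequality for the family $\{\p_s^{m+j}\psi_k\}$, which is \emph{not} orthogonal for $m+j\ge 1$; it is not supplied by the individual estimates $\norm{\p_s^{m+j}\psi_k}_{L^2}\le c\lambda_k^{(m+j)/4}$, and replacing it by the triangle inequality costs an extra factor $\sum_k e^{-t\lambda_k}\sim t^{-1/4}$, which destroys the stated rate. Establishing almost-orthogonality of the differentiated clamped-beam eigenfunctions (with their $\sinh$, $\cosh$ boundary layers of size $e^{\xi_k}$) is exactly the kind of computation the lemma is designed to avoid.

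The paper's proof sidesteps both issues. For $j=0$ it bounds $\norm{e^{-t\mc{L}}u}_{\dot H^{4r}}\le c\norm{\mc{L}^r e^{-t\mc{L}}u}_{L^2}$ using the inclusion $D(\mc{L}^r)\subseteq H^{4r}(I)$ from \eqref{inclusions} — this is legitimate because $\mc{L}^r$ \emph{is} diagonal in the eigenbasis — and then uses the scalar bound $\sup_k\lambda_k^r e^{-t(\lambda_k-\lambda_1)}\le c\max\{t^{-r},1\}$ (which also yields the clean $e^{-t\lambda_1}$ factor without your $\lambda_1/2$ detour). For $j\ge 1$ it transposes the derivative by duality: $(w,e^{-t\mc{L}}\p_s^j u)=\pm(\p_s^j e^{-t\mc{L}}w,u)$ for $u,w\in C_c^\infty(I)$ by self-adjointness of $e^{-t\mc{L}}$, reducing to the $j=0$ case, and concludes by density. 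That duality (or equivalently, splitting $e^{-t\mc{L}}=e^{-t\mc{L}/2}e^{-t\mc{L}/2}$ and taking adjoints of one factor) is the missing idea in your argument; with it, your explicit asymptotics for $\wh\psi_k$ become unnecessary.
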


\begin{proof}
For $u\in L^2(I)$, write $u=\sum_{k=1}^\infty a_k \psi_k(s)$ where $a_k = \int_0^1 u(s)\psi_k(s)\,ds$. For $0\le r\le 1$, we have 
\begin{align*}
\norm{\mc{L}^r e^{-t\mc{L}}u}_{L^2(I)} &= \bigg\|\sum_{k=1}^\infty \lambda_k^r e^{-t\lambda_k} a_k \psi_k\bigg\|_{L^2} \le \sup_{\lambda_k}\bigg(\lambda_k^{r}e^{-t(\lambda_k-\lambda_1)}\bigg)e^{-t\lambda_1} \bigg\|\sum_{k=1}^\infty a_k \psi_k\bigg\|_{L^2} \\
&\le  c_r \max\{t^{-r},1\} e^{-t\lambda_1}\norm{u}_{L^2(I)}\,.
\end{align*}
Using \eqref{inclusions}, we then have 
\begin{equation}\label{semi_est1}
\norm{e^{-t\mc{L}}u}_{\dot H^{4r}} \le c_r \max\{t^{-r},1\}e^{-t\lambda_1}\norm{u}_{L^2(I)}\,, \qquad 0\le r\le 1\,.
\end{equation}

It remains to show the smoothing estimate, for which we use a duality argument.
For $u,w\in C_c^\infty(I)$, we have 
\begin{align*}
\sup_{\norm{u}_{L^2}=1}\norm{e^{-t\mc{L}}(\p_s^j u) }_{L^2} &= \sup_{\norm{u}_{L^2}=1}\sup_{\norm{w}_{L^2}=1}\big(w,e^{-t\mc{L}}\p_s^j u \big)_{L^2} \\
&=\sup_{\norm{u}_{L^2}=1}\sup_{\norm{w}_{L^2}=1}\big(\p_s^j e^{-t\mc{L}}w,u \big)_{L^2} 
\end{align*}
since $\mc{L}$ is self-adjoint. Using \eqref{semi_est1}, we then have 
\begin{align*}
\sup_{\norm{u}_{L^2}=1}\norm{e^{-t\mc{L}}(\p_s^j u) }_{L^2} \le \sup_{\norm{u}_{L^2}=1}\sup_{\norm{w}_{L^2}=1}\norm{\p_s^j e^{-t\mc{L}}w}_{L^2}\norm{u}_{L^2}  
\le c_j \max\{t^{-j/4},1\}e^{-t\lambda_1}\,.
\end{align*}
The same result holds for $u\in L^2(I)$ by density.
\end{proof}

Finally, we will require the following lemma to show local existence for large initial data. 
\begin{lemma}\label{lem:init_dat}
Fix $u\in L^2(I)$ and let $0<r\le 1$, $\varepsilon>0$. There exists $T_\varepsilon>0$ which depends on $u$ and satisfies
\begin{equation}\label{smalltimelem}
\sup_{t\in[0,T_\varepsilon]}\, \min\{t^{r},1\} e^{t\lambda_1}\norm{e^{-t\mc{L}}u}_{\dot H^{4r}(I)} \le \varepsilon\,.
\end{equation}
\end{lemma}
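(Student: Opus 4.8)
The plan is to exploit the fact that $T_\varepsilon$ is allowed to depend on the \emph{function} $u$, not merely on $\norm{u}_{L^2}$, through a standard $\varepsilon/2$-approximation argument. Note first that the semigroup bound \eqref{semi_est1} from the proof of Lemma \ref{lem:smoothing}, combined with the pointwise identity $\min\{t^r,1\}\max\{t^{-r},1\}=1$ for $t>0$, already gives the \emph{uniform} estimate $\min\{t^r,1\}e^{t\lambda_1}\norm{e^{-t\mc L}u}_{\dot H^{4r}}\le c_r\norm{u}_{L^2}$. This is bounded but not small near $t=0$, so to get \eqref{smalltimelem} we must extract genuine decay as $t\to0^+$, which will come from extra regularity of the datum.

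Concretely, fix $\varepsilon>0$. Since finite linear combinations of the eigenfunctions $\psi_k$ are dense in $L^2(I)$ and lie in $D(\mc L)\subseteq D(\mc L^r)$, choose $v\in D(\mc L^r)$ with $\norm{u-v}_{L^2}\le \varepsilon/(2c_r)$, and split $e^{-t\mc L}u=e^{-t\mc L}(u-v)+e^{-t\mc L}v$. For the first piece, \eqref{semi_est1} together with $\min\{t^r,1\}\max\{t^{-r},1\}=1$ gives
\[
\min\{t^r,1\}\,e^{t\lambda_1}\norm{e^{-t\mc L}(u-v)}_{\dot H^{4r}}\le c_r\norm{u-v}_{L^2}\le \varepsilon/2
\]
for all $t\ge0$. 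For the second piece, using that $\mc L^r$ and $e^{-t\mc L}$ commute (both are spectral functions of $\mc L$), the inclusion \eqref{inclusions}, and the contraction bound $\norm{e^{-t\mc L}w}_{L^2}\le e^{-t\lambda_1}\norm{w}_{L^2}$ (valid since every eigenvalue of $\mc L$ is at least $\lambda_1$), we obtain
\[
\norm{e^{-t\mc L}v}_{\dot H^{4r}}\le c\,\norm{\mc L^r e^{-t\mc L}v}_{L^2}=c\,\norm{e^{-t\mc L}\mc L^r v}_{L^2}\le c\,e^{-t\lambda_1}\norm{\mc L^r v}_{L^2}.
\]
Hence $\min\{t^r,1\}e^{t\lambda_1}\norm{e^{-t\mc L}v}_{\dot H^{4r}}\le c\,t^{r}\norm{\mc L^r v}_{L^2}$ for $t\le1$, which tends to $0$ as $t\to0^+$. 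Choosing $T_\varepsilon=\min\{1,\,(\varepsilon/(2c\norm{\mc L^r v}_{L^2}))^{1/r}\}$ (and simply $T_\varepsilon=1$ in the degenerate case $v=0$, for which $\norm{u}_{L^2}\le\varepsilon/(2c_r)$ and the whole quantity is already controlled) makes this term $\le\varepsilon/2$ on $[0,T_\varepsilon]$. Adding the two estimates yields \eqref{smalltimelem}, and $T_\varepsilon$ depends on $u$ through the approximant $v$, as allowed.

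The only conceptual point — which one might call the main obstacle — is realizing that Lemma \ref{lem:smoothing} alone is insufficient, because it controls the weighted quantity uniformly but does not make it small near $t=0$; the resolution is precisely that $T_\varepsilon$ may depend on $u$, which lets us peel off the rough part of $u$ into a small uniform error while gaining the $t^r$-decay factor from the smooth remainder. Everything else is routine: density of $D(\mc L^r)$ in $L^2(I)$, commutation of spectral functions of $\mc L$, and the contractivity bound, all of which are immediate from the eigenfunction expansion \eqref{L_efuns}.
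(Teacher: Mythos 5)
Your proof is correct and follows essentially the same route as the paper's: approximate $u$ in $L^2$ by an element of $D(\mc{L}^r)$, control the rough remainder uniformly via the semigroup estimate \eqref{semi_est1}, and gain the factor $t^r$ from the smooth approximant (your $c\norm{\mc{L}^r v}_{L^2}$ plays the role of the paper's constant $c_j$). The only additions are cosmetic — the explicit formula for $T_\varepsilon$ and the degenerate case $v=0$ — so no further comment is needed.
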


The smallness in Lemma \ref{lem:init_dat} is achieved by approximating $u\in L^2(I)$ by slightly smoother functions. 
\begin{proof}
Consider a sequence of functions $\phi_j\in D(\mc{L}^r)$ with $\phi_j\to u$ in $L^2$. Since $\phi_j\in D(\mc{L}^r)$, by \eqref{domainLr} we have 
\begin{align*}
\norm{e^{-t\mc{L}}\phi_j}_{\dot H^{4r}} \le \norm{\mc{L}^re^{-t\mc{L}}\phi_j}_{L^2} \le \norm{\sum_{k=1}^\infty\lambda_k^r e^{-t\lambda_k}(\phi_j,\psi_k)\psi_k}_{L^2}
\le e^{-t\lambda_1}\bigg(\sum_{k=1}^\infty\lambda_k^{2r} (\phi_j,\psi_k)^2\bigg)^{1/2} \le e^{-t\lambda_1}c_j
\end{align*}
for some constant $0<c_j<\infty$. Then, using Lemma \ref{lem:smoothing}, we have
\begin{align*}
\min\{t^{r},1\} e^{t\lambda_1}\norm{e^{-t\mc{L}}u}_{\dot H^{4r}} &\le \min\{t^{r},1\} e^{t\lambda_1}\norm{e^{-t\mc{L}}\phi_j}_{\dot H^{4r}}+ \min\{t^{r},1\} e^{t\lambda_1}\norm{e^{-t\mc{L}}(u-\phi_j)}_{\dot H^{4r}}  \\
&\le \min\{t^{r},1\}\,c_j+ c\norm{u-\phi_j}_{L^2}\,.
\end{align*}
Taking $j$ large enough that $c\norm{u-\phi_j}_{L^2}\le \frac{\varepsilon}{2}$ and taking $t$ small enough that $t^{r}\,c_j\le \frac{\varepsilon}{2}$, we obtain the bound \eqref{smalltimelem}.
\end{proof}

With Lemmas \ref{lem:GN}, \ref{lem:smoothing}, and \ref{lem:init_dat}, we may now turn to the proof of Theorem \ref{thm:existence}: well-posedness for the unforced system \eqref{noforce_eqn1}--\eqref{noforce_BCs}. First, a remark:

\begin{remark}\label{rmk}
To highlight the subtlety of the well-posedness theory for the system \eqref{noforce_eqn1}--\eqref{noforce_BCs}, we may consider the following reduced PDE system consisting of the most problematic terms of \eqref{noforce_eqn1}--\eqref{noforce_BCs}:
\begin{align*}
\dot\kappa =-\kappa_{ssss} + (\tau\kappa_s)_s\,, \quad \tau_{ss}=\kappa_s^2\,.
\end{align*}
We may rewrite this simplified system as a single equation:
\begin{align*}
\dot\kappa =-\kappa_{ssss} + \big(\p_{ss}^{-1}(\kappa_s^2)\kappa_s\big)_s\,,
\end{align*}
where $\p_{ss}^{-1}$ is the inverse second derivative with homogeneous Dirichlet conditions at $s=0,1$. This equation is invariant under the same scaling symmetry as the full system: $\kappa(s,t)\mapsto \lambda\kappa(\lambda s,\lambda^4t)$. Again, $\norm{\kappa}_{L^2}$ is subcritical for this equation. However, to actually make sense of the tension $\tau$, we need $\kappa_s\in L^2$. Taking initial data $\kappa_{\rm in}\in L^2(I)$ only, we expect that $\sup_t \,t^{1/4}\norm{\kappa_s}_{L^2(I)}<\infty$ due to the smoothing effect of $\mc{L}$. Thus every $\kappa_s$ which appears in the nonlinear term `costs' $t^{-1/4}$. From Lemma \ref{lem:smoothing}, using the smoothing estimate on the semigroup to account for the outside derivative in the nonlinear term also costs $t^{-1/4}$. The nonlinearity then contains three additional powers of $\norm{\kappa_s}_{L^2}$, costing a total of $t^{-3/4}$. Altogether, we must integrate $t^{-1}$ in the Duhamel formula, so the full power of the smoothing of $\mc{L}$ is required to close the contraction.
\end{remark}

\subsection{Tension equation}\label{subsec:tension}
We first consider the elliptic equation \eqref{noforce_eqn2} and derive the following estimates for the tension $\tau$ in terms of $\kappa$.

\begin{lemma}\label{lem:T_lemma}
Given $\kappa\in H^1(I)$, there exists a unique weak solution $\tau$ to equation \eqref{noforce_eqn2} satisfying
\begin{equation}\label{T_H1bound}
 \norm{\tau}_{H^1(I)} \le c\norm{\kappa}_{\dot H^1(I)}^2\left(\norm{\kappa}_{L^2(I)}+1\right)\,.
 \end{equation}

Given $\kappa,\varphi\in H^1(I)$, define $\tau^\kappa$, $\tau^\varphi$ to be the $H^1_0$ solution to \eqref{noforce_eqn2} using $\kappa$, $\varphi$ respectively on the right hand side. The difference $\tau^\kappa-\tau^\varphi$ then satisfies
\begin{equation}\label{wtT_bound}
\begin{aligned}
\|\tau^\kappa-\tau^\varphi\|_{H^1(I)}
&\le 
c\norm{\kappa-\varphi}_{L^2}\bigg(\norm{\kappa}_{\dot H^1}^2(\norm{\kappa}_{L^2}^2+1) +\norm{\varphi}_{\dot H^1}^2(\norm{\varphi}_{L^2}^2+1)\bigg) \\
&\qquad+ c\norm{\kappa-\varphi}_{\dot H^1}\big(\norm{\kappa}_{\dot H^1}+\norm{\varphi}_{\dot H^1}\big)\,.
\end{aligned}
\end{equation}
\end{lemma}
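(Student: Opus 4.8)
The plan is to treat \eqref{noforce_eqn2} as a linear elliptic problem of the form $\tau_{ss} - q(s)\tau = f(s)$ with $q = \kappa^2/(1+\gamma) \ge 0$ and $f$ built from $\kappa$ and its derivatives, supplemented by homogeneous Dirichlet conditions. Existence and uniqueness of a weak solution in $H^1_0(I)$ follows from Lax--Milgram once I check that the bilinear form $a(\tau,\phi) = \int_0^1 \tau_s\phi_s\,ds + \int_0^1 q\,\tau\phi\,ds$ is bounded and coercive on $H^1_0(I)$: coercivity is immediate from $q\ge 0$ together with the Poincar\'e inequality, and boundedness needs $q = \kappa^2/(1+\gamma) \in L^1(I)$, which holds since $\kappa \in H^1(I) \hookrightarrow L^\infty(I)$ by Lemma \ref{lem:GN}. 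For the right-hand side, I must check that $f$ defines a bounded functional on $H^1_0(I)$; the worst terms are the ones with two derivatives, $\big(\kappa(\kappa+2\kappa_0)\big)_{ss}$ and $\big(\kappa_s(\kappa+\kappa_0)\big)_s$ — wait, here $\kappa_0 \equiv 0$, so the relevant terms are $-(4+3\gamma)(\kappa_s\kappa)_s$ (one derivative falling on a product) and the $\kappa^4 + \kappa_s^2$ terms (no derivatives). I handle the $(\kappa_s\kappa)_s$ term by integrating by parts once against the test function, moving the derivative onto $\phi_s$, so it is controlled by $\|\kappa_s\kappa\|_{L^2}\|\phi_s\|_{L^2}$; then $\|\kappa_s\kappa\|_{L^2} \le \|\kappa\|_{L^\infty}\|\kappa_s\|_{L^2} \lesssim \|\kappa\|_{\dot H^1}^{1/2}\|\kappa\|_{L^2}^{1/2}\|\kappa\|_{\dot H^1}$, and similarly $\|\kappa^4 + \kappa_s^2\|_{L^1} \lesssim \|\kappa\|_{\dot H^1}^2$ after interpolation.

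Next, to obtain the quantitative bound \eqref{T_H1bound}, I test the weak formulation against $\tau$ itself: $\|\tau_s\|_{L^2}^2 + \frac{1}{1+\gamma}\int_0^1\kappa^2\tau^2\,ds = -\langle f, \tau\rangle$. Dropping the (nonnegative) zeroth-order term and estimating the right side by the functional bounds just described, then using Poincar\'e to absorb $\|\tau\|_{L^2}$ into $\|\tau_s\|_{L^2}$, gives $\|\tau\|_{H^1} \lesssim \|\kappa_s\kappa\|_{L^2} + \|\kappa^4 + \kappa_s^2\|_{L^1} \lesssim \|\kappa\|_{\dot H^1}^2 + \|\kappa\|_{\dot H^1}^{3/2}\|\kappa\|_{L^2}^{1/2}$, which I then bound by $c\|\kappa\|_{\dot H^1}^2(\|\kappa\|_{L^2} + 1)$ using $\|\kappa\|_{\dot H^1}^{3/2}\|\kappa\|_{L^2}^{1/2} \le \|\kappa\|_{\dot H^1}^2$ when $\|\kappa\|_{\dot H^1} \ge \|\kappa\|_{L^2}$ and the other way otherwise — actually it is cleaner to just interpolate and use Young's inequality to land every term under the claimed form. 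I should double-check which Gagliardo--Nirenberg exponents give exactly the stated dependence, but morally the $L^2$ factor is harmless.

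For the difference estimate \eqref{wtT_bound}, set $w = \tau^\kappa - \tau^\varphi$. Subtracting the two equations, $w$ solves $w_{ss} - \frac{\kappa^2}{1+\gamma}w = \frac{\varphi^2 - \kappa^2}{1+\gamma}\tau^\varphi + (f^\kappa - f^\varphi)$ with homogeneous Dirichlet data, where $f^\kappa - f^\varphi$ collects the differences of the right-hand-side terms. Testing against $w$ and dropping the good sign term, $\|w_s\|_{L^2}^2 \lesssim \|(\varphi^2-\kappa^2)\tau^\varphi\|_{L^1}\|w\|_{L^\infty}$ wait — better to keep $\|(\varphi^2-\kappa^2)\tau^\varphi\|$ paired with $w$ via $\|\varphi^2 - \kappa^2\|_{L^2}\|\tau^\varphi\|_{L^\infty\text{ or }L^2}\|w\|$; then $\|\varphi^2 - \kappa^2\|_{L^2} = \|(\varphi-\kappa)(\varphi+\kappa)\|_{L^2} \le \|\varphi - \kappa\|_{L^2}\|\varphi + \kappa\|_{L^\infty}$, and I invoke \eqref{T_H1bound} for $\tau^\varphi$. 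The terms in $f^\kappa - f^\varphi$ are polynomial in $\kappa,\varphi$ and their first derivatives, so each difference factors as $(\kappa - \varphi)$ or $(\kappa_s - \varphi_s)$ times lower-order factors, and after integrating by parts to remove the outer derivatives (exactly as in the first part) and interpolating, everything collapses into the two groups displayed in \eqref{wtT_bound}: the $\|\kappa - \varphi\|_{L^2}$ group (absorbing the quartic-type and tension-feedback terms) and the $\|\kappa - \varphi\|_{\dot H^1}$ group (absorbing the $(\kappa_s\kappa)_s$-type differences). The main obstacle, and the step deserving the most care, is bookkeeping the Gagliardo--Nirenberg interpolation so that the powers of $\|\kappa\|_{L^2}$, $\|\kappa\|_{\dot H^1}$ (and their $\varphi$ counterparts) match the asymmetric exponents in \eqref{wtT_bound} exactly — in particular isolating precisely which pieces must be paired with $\|\kappa-\varphi\|_{\dot H^1}$ versus $\|\kappa-\varphi\|_{L^2}$, since a naive bound would put a derivative-difference factor on too many terms. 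Everything else is a routine elliptic energy estimate.
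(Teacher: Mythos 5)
Your proposal is correct and follows essentially the same route as the paper: Lax--Milgram for the bilinear form with the nonnegative potential $\kappa^2/(1+\gamma)$, the derivative in $(\kappa_s\kappa)_s$ moved onto the test function, an energy estimate obtained by testing with $\tau$ (resp. the difference $w$) combined with Gagliardo--Nirenberg and Young. The only small divergence is that the paper retains the potential term $\int\kappa^2\tau^2$ to absorb the $\kappa\tau$ factor arising from $\kappa^4$ rather than dropping it, which saves a power of $\|\kappa\|_{L^2}$ in that term; either way one lands in the stated form.
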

Note that the right hand sides of both \eqref{T_H1bound} and \eqref{wtT_bound} involve two powers of an $\dot H^1$ norm, and it is not clear that a lower power is possible. This factor of $\norm{\kappa}_{\dot H^1}^2$ presents an obstacle to showing global existence for large data, as demonstrated by Remark \ref{rmk}. 

\begin{proof}
Given $\kappa\in H^1(I)$, for $\phi\in H^1_0(I)$, we may define 
\begin{equation}\label{T_bilinear}
\mc{B}(\tau,\phi) := \int_0^1\bigg(\tau_{s}\phi_s+\frac{\kappa^2}{1+\gamma}\tau\phi \bigg)\,ds\,, 
\end{equation}
which, due to the sign of the potential term (note that $\kappa\in L^\infty(I)$) is always bounded and coercive on $H^1_0(I)$. We may define a weak solution of the tension equation as $\tau$ satisfying
\begin{equation}\label{T_weaksol}
\mc{B}(\tau,\phi) = -\int_0^1\frac{1}{1+\gamma}\bigg(\kappa^4\phi + (\kappa_s)^2\phi +(4+3\gamma)\kappa_s\kappa\phi_s \bigg) \,ds \quad \text{for all }\phi\in H^1_0(I)\,.
\end{equation}
By the Lax--Milgram lemma, there exists a unique weak solution $\tau\in H^1_0(I)$, and, using Lemma \ref{lem:GN}, $\tau$ satisfies 
\begin{align*}
\mc{B}(\tau,\tau) &\le c\bigg(\norm{\kappa}_{L^6}^3\norm{\kappa\tau}_{L^2}+\norm{\kappa_s}_{L^2}^2\norm{\tau}_{L^\infty} + \norm{\kappa_s}_{L^2}\norm{\tau_s}_{L^2}\norm{\kappa}_{L^\infty}\bigg) \\
&\le c\bigg(\norm{\kappa}_{\dot H^1}\norm{\kappa}_{L^2}^2\norm{\kappa\tau}_{L^2}+\norm{\kappa}_{\dot H^1}^2\norm{\tau}_{H^1} + \norm{\kappa}_{\dot H^1}^{3/2}\norm{\kappa}_{L^2}^{1/2}\norm{\tau}_{H^1}\bigg)\,.
\end{align*}
Note that, in terms of regularity, the limiting factor here is the term $\kappa_s^2\tau$, which requires 2 powers of $\norm{\kappa}_{\dot H^1}$ to estimate, while the additional regularity of $\tau$ is wasted.  

Using Young's inequality, we then have 
 \begin{equation}\label{BTT_est}
\mc{B}(\tau,\tau) 
\le c\bigg(\norm{\kappa}_{\dot H^1}^2\norm{\kappa}_{L^2}^4+\norm{\kappa}_{\dot H^1}^4 + \norm{\kappa}_{\dot H^1}^3\norm{\kappa}_{L^2}\bigg)\,,
 \end{equation}
and therefore, using that $\norm{\tau}_{H^1}^2\le c\norm{\tau_s}_{L^2}^2\le c\mc{B}(\tau,\tau)$, we obtain the bound \eqref{T_H1bound}.\\

We next show the Lipschitz estimate \eqref{wtT_bound} for $\tau$. Let $\tau^\kappa$, $\tau^\varphi$ be as defined in Lemma \ref{lem:T_lemma}. The difference $\wh \tau= \tau^\kappa-\tau^\varphi$ then satisfies
 \begin{align*}
 \wh \tau_{ss} &= \frac{\kappa^2}{1+\gamma}\tau^\kappa-\frac{\varphi^2}{1+\gamma}\tau^\varphi +\frac{1}{1+\gamma}\bigg(\kappa^4-\varphi^4 + (\kappa_s)^2-(\varphi_s)^2-\frac{4+3\gamma}{2}\big(\kappa^2-\varphi^2\big)_{ss}  \bigg) \\
 &= \frac{1}{2(1+\gamma)}\bigg((\kappa^2+\varphi^2)\wh \tau+(\kappa-\varphi)(\kappa+\varphi)(\tau^\kappa+\tau^\varphi)\bigg) \\
 &\quad + \frac{1}{1+\gamma}\bigg((\kappa-\varphi)(\kappa+\varphi)(\kappa^2+\varphi^2) + (\kappa_s-\varphi_s)(\kappa_s+\varphi_s)-\frac{4+3\gamma}{2}\big((\kappa-\varphi)(\kappa+\varphi)\big)_{ss}  \bigg)\,.
 \end{align*}

 Upon multiplying by $\wh \tau$ and integrating by parts in $s$, we obtain 
 \begin{align*}
 & \int_0^1 \bigg(\wh \tau_{s}^2+\frac{\kappa^2+\varphi^2}{2(1+\gamma)}\wh \tau^2\bigg)ds \\
 &\le 
 c\bigg(\norm{\kappa-\varphi}_{L^2}\norm{\kappa+\varphi}_{L^2}\|\tau^\kappa+\tau^\varphi\|_{L^\infty}\bigg)\|\wh \tau\|_{L^\infty} \\
 &\quad +c\bigg(\norm{\kappa-\varphi}_{L^\infty}\norm{\kappa+\varphi}_{L^\infty}\|(\kappa^2+\varphi^2)\wh \tau\|_{L^2} + \norm{\kappa_s-\varphi_s}_{L^2}\norm{\kappa_s+\varphi_s}_{L^2}\|\wh \tau\|_{L^\infty}\bigg)\\
 &\quad + c\bigg(\norm{\kappa-\varphi}_{L^\infty}\norm{\kappa_s+\varphi_s}_{L^2}+ \norm{\kappa_s-\varphi_s}_{L^2}\norm{\kappa+\varphi}_{L^\infty}\bigg)\|\wh \tau_s\|_{L^2}  \\
 &\le 
c\norm{\kappa-\varphi}_{L^2}(\norm{\kappa}_{L^2}+\norm{\varphi}_{L^2})\bigg(\norm{\kappa}_{\dot H^1}^2(\norm{\kappa}_{L^2}+1)+\norm{\varphi}_{\dot H^1}^2(\norm{\varphi}_{L^2}+1)\bigg)\|\wh \tau\|_{L^\infty}  \\
 &\quad +c\norm{\kappa-\varphi}_{\dot H^1}^{1/2}\norm{\kappa-\varphi}_{L^2}^{1/2}(\norm{\kappa}_{\dot H^1}^{1/2}\norm{\kappa}_{L^2}^{1/2}+\norm{\varphi}_{\dot H^1}^{1/2}\norm{\varphi}_{L^2}^{1/2})\|(\kappa^2+\varphi^2)\wh \tau\|_{L^2} \\
 &\quad+ c\norm{\kappa-\varphi}_{\dot H^1}(\norm{\kappa}_{\dot H^1}+\norm{\varphi}_{\dot H^1})\|\wh \tau\|_{H^1}\,.
 \end{align*}

 Here we have used Lemma \ref{lem:GN} as well as the bound \eqref{T_H1bound}. Using Young's inequality, we then have  
\begin{align*}
\|\wh \tau\|_{H^1}^2 &\le \int_0^1 \bigg(\wh \tau_s^2+\frac{\kappa^2+\varphi^2}{2(1+\gamma)}\wh \tau^2\bigg)ds \\
&\le 
c\norm{\kappa-\varphi}_{L^2}^2(\norm{\kappa}_{L^2}+\norm{\varphi}_{L^2})^2\bigg(\norm{\kappa}_{\dot H^1}^2(\norm{\kappa}_{L^2}+1)+\norm{\varphi}_{\dot H^1}^2(\norm{\varphi}_{L^2}+1)\bigg)^2 \\
 &\quad + c\norm{\kappa-\varphi}_{\dot H^1}^2\big(\norm{\kappa}_{\dot H^1}+\norm{\varphi}_{\dot H^1}\big)^2\,. 
\end{align*}

In particular, we have
\begin{align*}
\|\tau^\kappa-\tau^\varphi\|_{H^1(I)}&\le 
c\norm{\kappa-\varphi}_{L^2}(\norm{\kappa}_{L^2}+\norm{\varphi}_{L^2})\bigg(\norm{\kappa}_{\dot H^1}^2(\norm{\kappa}_{L^2}+1)+\norm{\varphi}_{\dot H^1}^2(\norm{\varphi}_{L^2}+1)\bigg) \\
&\qquad+ c\norm{\kappa-\varphi}_{\dot H^1}\big(\norm{\kappa}_{\dot H^1}+\norm{\varphi}_{\dot H^1}\big)\,,
\end{align*}
from which we obtain the bound \eqref{wtT_bound}.
\end{proof}

\subsection{Evolution equation}
We now consider the evolution equation \eqref{noforce_eqn1} for the curvature $\kappa(s,t)$. To prove the existence of a unique mild solution \eqref{theta_mild} in both the large data and small data settings, we will rely on a contraction mapping argument that makes use of the semigroup estimates of Lemma \ref{lem:smoothing}. We will also require Lemma \ref{lem:init_dat} in the large data setting. 

\begin{proof}[Proof of Theorem \ref{thm:existence}]
We begin by defining the function spaces $\mc{Y}_0$, $\mc{Y}_1$ as
\begin{align*}
\mc{Y}_0 &= \big\{ u\in C([0,T];L^2(I))\,:\, \norm{u}_{\mc{Y}_0}<\infty \big\}\,, \\
\mc{Y}_1 &= \big\{ u\in C((0,T];\dot H^1(I))\,:\, \norm{u}_{\mc{Y}_1}<\infty \big\} \,,
\end{align*}
where the norms $\norm{\cdot}_{\mc{Y}_0}$, $\norm{\cdot}_{\mc{Y}_1}$, respectively, are given by 
\begin{align*}
\norm{\cdot}_{\mc{Y}_0}:= \sup_{t\in [0,T]} e^{t\lambda_1}\norm{\cdot}_{L^2(I)}\,, \qquad \norm{\cdot}_{\mc{Y}_1}:=\sup_{t\in [0,T]} \min\{t^{1/4},1\}\,e^{t\lambda_1}\norm{\cdot}_{\dot H^1(I)} \,. 
\end{align*}

We will close a contraction mapping argument in $\mc{Y}_0\cap\mc{Y}_1$. We consider the intersection of two closed balls in $\mc{Y}_0$ and $\mc{Y}_1$:  
\begin{align*}
B_{M_0}(\mc{Y}_0)\cap B_{M_1}(\mc{Y}_1) = \big\{ u\in \mc{Y}_0\cap\mc{Y}_1\,:\, \norm{u}_{\mc{Y}_0}\le M_0,\; \norm{u}_{\mc{Y}_1}\le M_1 \big\}\,.
\end{align*}
Given either sufficiently small $T$ or sufficiently small initial data, we show that the mild solution formula \eqref{theta_mild} maps $B_{M_0}(\mc{Y}_0)\cap B_{M_1}(\mc{Y}_1)$ into itself for some choice of $M_0>0$ and $0<M_1<1$. 

For any $\varphi\in B_{M_0}(\mc{Y}_0)\cap B_{M_1}(\mc{Y}_1)$, let $\Psi$ denote the mapping
\begin{align*}
\Psi[\varphi]:= e^{-t\mc{L}}\kappa_{\rm in} + \int_0^t e^{-(t-t')\mc{L}}\big(\mc{R}[\varphi(s,t')]\big)_s dt' \,.
\end{align*}
We start by estimating the nonlinear terms of \eqref{theta_mild}. First, using \eqref{T_H1bound} and Lemma \ref{lem:GN}, we have
\begin{equation}\label{Rest_L2}
\begin{aligned}
\norm{\mc{R}[\kappa(\cdot,t)]}_{L^2} &\le c\bigg(\norm{\kappa}_{L^\infty}^2\norm{\kappa_s}_{L^2} +\norm{\tau}_{L^2}\norm{\kappa}_{L^\infty} +\norm{\tau}_{L^\infty}\norm{\kappa_s}_{L^2} \bigg)\\
&\le c\bigg(\norm{\kappa}_{L^2}\norm{\kappa}_{\dot H^1}^2 +\norm{\kappa}_{\dot H^1}^{5/2}(\norm{\kappa}_{L^2}^{3/2}+\norm{\kappa}_{L^2}^{1/2}) +\norm{\kappa}_{\dot H^1}^3(\norm{\kappa}_{L^2}+1) \bigg) \\
&\le c\norm{\kappa}_{\dot H^1}^3(\norm{\kappa}_{L^2}+1)\,.
\end{aligned}
\end{equation}
Here we note that the term $\tau\kappa_s$ from \eqref{remainder} is the limiting factor in terms of regularity (and note that writing $\tau_s\kappa+\tau\kappa_s=(\tau\kappa)_s$ and using the smoothing of the semigroup in Lemma \ref{lem:smoothing} will still lead to the same result). In particular, since we only have the $H^1$ bound \eqref{T_H1bound} for $\tau$, the regularity of $\tau$ in this estimate ends up getting wasted, and we end up with 3 powers of $\norm{\kappa}_{\dot H^1}$ on the right hand side of \eqref{Rest_L2}. 

Using Lemma \ref{lem:smoothing} and \eqref{Rest_L2}, we then have
\begin{align*}
&\norm{\int_0^t e^{-(t-t')\mc{L}}\big(\mc{R}[\kappa(\cdot,t')]\big)_s\, dt'}_{\dot H^1(I)}  \le c\int_0^t \max\{(t-t')^{-1/2},1\}e^{-(t-t')\lambda_1}\norm{\mc{R}[\kappa]}_{L^2(I)} dt' \\
&\hspace{1cm}\le c\int_0^t \max\{(t-t')^{-1/2},1\}e^{-(t-t')\lambda_1}\norm{\kappa}_{\dot H^1}^3(\norm{\kappa}_{L^2}+1) dt'  \\
&\hspace{1cm}\le c\int_0^t \max\{(t-t')^{-1/2},1\} e^{-(t-t')\lambda_1}\left(e^{-3t'\lambda_1}\max\{(t')^{-3/4},1\}\norm{\kappa}_{\mc{Y}_1}^3\left( e^{-t'\lambda_1}\norm{\kappa}_{\mc{Y}_0} +1 \right) \right)\,dt'  \\
&\hspace{1cm}\le ce^{-t\lambda_1}\int_0^t \max\{(t-t')^{-1/2},1\}\max\{(t')^{-3/4},1\} \,e^{-2t'\lambda_1}\,dt' \, \left(\norm{\kappa}_{\mc{Y}_1}^3\left( \norm{\kappa}_{\mc{Y}_0} +1 \right) \right)  \\
&\hspace{1cm}\le c\,\max\{t^{-1/4},1\}e^{-t\lambda_1}\left(\norm{\kappa}_{\mc{Y}_1}^3\left( \norm{\kappa}_{\mc{Y}_0} +1 \right) \right)\,. 
\end{align*} 
By an analogous series of estimates, we also have
\begin{align*}
\norm{\int_0^t e^{-(t-t')\mc{L}}\big(\mc{R}[\kappa(\cdot,t')]\big)_s\, dt'}_{L^2(I)} 
&\le c e^{-t\lambda_1}\left(\norm{\kappa}_{\mc{Y}_1}^3\left( \norm{\kappa}_{\mc{Y}_0} +1 \right) \right)\,.
\end{align*}

Then, using Lemma \ref{lem:smoothing}, for $\varphi\in B_{M_0}(\mc{Y}_0)\cap B_{M_1}(\mc{Y}_1)$ we obtain the following $\mc{Y}_0$ bound:
\begin{equation}\label{BM_Y0bound}
\begin{aligned}
\norm{\Psi[\varphi]}_{\mc{Y}_0} &\le \sup_{t\in[0,T]}e^{t\lambda_1}\norm{e^{-t\mc{L}}\kappa_{\rm in}}_{L^2}+ c\norm{\varphi}_{\mc{Y}_1}^3\left( \norm{\varphi}_{\mc{Y}_0} +1 \right)  \\
&\le \norm{\kappa_{\rm in}}_{L^2(I)}+cM_1^3(M_0+1) \le M_0\,,
\end{aligned}
\end{equation}
where we have taken $M_0=2\norm{\kappa_{\rm in}}_{L^2(I)}$ and $M_1$ small enough that $cM_1^3(M_0+1)\le \frac{M_0}{2}$.

Furthermore, to obtain a $\mc{Y}_1$ bound, we use that 
\begin{equation}\label{BM_Y1bound}
\begin{aligned}
\norm{\Psi[\varphi]}_{\mc{Y}_1} &\le \sup_{t\in[0,T]}\,\min\{t^{1/4},1\}\,e^{t\lambda_1}\norm{e^{-t\mc{L}}\kappa_{\rm in}}_{\dot H^1}+ c\norm{\varphi}_{\mc{Y}_1}^3\left( \norm{\varphi}_{\mc{Y}_0} +1 \right)  \\
&\le \sup_{t\in[0,T]}\,\min\{t^{1/4},1\}\,e^{t\lambda_1}\norm{e^{-t\mc{L}}\kappa_{\rm in}}_{\dot H^1}+cM_1^3(M_0+1) \\
&\le \sup_{t\in[0,T]}\,\min\{t^{1/4},1\}\,e^{t\lambda_1}\norm{e^{-t\mc{L}}\kappa_{\rm in}}_{\dot H^1}+\frac{M_1}{2} 
\end{aligned}
\end{equation}
for $M_1$ sufficiently small. It remains to show that 
\begin{align*}
\sup_{t\in[0,T]}\,\min\{t^{1/4},1\}\,e^{t\lambda_1}\norm{e^{-t\mc{L}}\kappa_{\rm in}}_{\dot H^1} \le \frac{M_1}{2}\,,
\end{align*}
for which we have two options:
\begin{enumerate}
\item \emph{Small time:} Using Lemma \ref{lem:init_dat}, we may choose a time $T_{M_1}=T_{M_1}(\kappa_{\rm in})$ such that 
\begin{align*}
\sup_{t\in[0,T_{M_1}]}\,\min\{t^{1/4},1\}\,e^{t\lambda_1}\norm{e^{-t\mc{L}}\kappa_{\rm in}}_{\dot H^1} \le \frac{M_1}{2}\,.
\end{align*}
Note that $T_{M_1}$ depends on $\kappa_{\rm in}$, particularly on how well $\kappa_{\rm in}\in L^2(I)$ can be approximated by slightly smoother functions. \\
\item \emph{Small data:} Using Lemma \ref{lem:smoothing}, we have 
\begin{align*}
\sup_{t\in[0,T]}\,\min\{t^{1/4},1\}\,e^{t\lambda_1}\norm{e^{-t\mc{L}}\kappa_{\rm in}}_{\dot H^1} \le c\norm{\kappa_{\rm in}}_{L^2} \,.
\end{align*}
For $\norm{\kappa_{\rm in}}_{L^2}$ sufficiently small, we may take $M_1=2c\norm{\kappa_{\rm in}}_{L^2}$ to obtain the desired bound.  
\end{enumerate} 

We now show that $\Psi$ is a contraction on $B_{M_0}(\mc{Y}_0)\cap B_{M_1}(\mc{Y}_1)$. For $\kappa,\varphi\in H^1(I)$ we first note that 
\begin{equation}\label{R_lip}
\begin{aligned}
&\norm{\mc{R}[\kappa(\cdot,t)]-\mc{R}[\varphi(\cdot,t)]}_{L^2(I)}\\
 &\le c\left(\norm{\kappa^2\kappa_s-\varphi^2\varphi_s}_{L^2}
+\norm{\tau^\kappa_s\kappa-\tau^\varphi_s\varphi}_{L^2}+\norm{\tau^\kappa\kappa_s-\tau^\varphi\varphi_s}_{L^2}\right) \\
&\le c\bigg(\norm{(\kappa^2+\varphi^2-\varphi\kappa)(\kappa_s-\varphi_s)}_{L^2} +\norm{(\kappa-\varphi)(\kappa\varphi_s+\varphi\kappa_s)}_{L^2} + \|(\tau^\kappa-\tau^\varphi)_s(\kappa+\varphi)\|_{L^2}\\
&\quad  + \|(\tau^\kappa+\tau^\varphi)_s(\kappa-\varphi)\|_{L^2} + \|(\tau^\kappa-\tau^\varphi)(\kappa_s+\varphi_s)\|_{L^2} + \|(\tau^\kappa+\tau^\varphi)(\kappa_s-\varphi_s)\|_{L^2} \bigg) \\
&\le c\bigg(\left(\norm{\kappa}_{L^\infty}^2+\norm{\varphi}_{L^\infty}^2\right)\norm{\kappa_s-\varphi_s}_{L^2} +\norm{\kappa-\varphi}_{L^\infty}\big(\norm{\kappa}_{L^\infty}\norm{\varphi_s}_{L^2}+\norm{\varphi}_{L^\infty}\norm{\kappa_s}_{L^2} \big)  \\
&\quad + \|(\tau^\kappa-\tau^\varphi)_s\|_{L^2}(\norm{\kappa}_{L^\infty}+\norm{\varphi}_{L^\infty}) + (\|\tau^\kappa_s\|_{L^2}+\|\tau^\varphi_s\|_{L^2})\norm{\kappa-\varphi}_{L^\infty}\\
&\quad + \|\tau^\kappa-\tau^\varphi\|_{L^\infty}(\norm{\kappa_s}_{L^2}+\norm{\varphi_s}_{L^2}) + (\|\tau^\kappa\|_{L^\infty}+\|\tau^\varphi\|_{L^\infty})\norm{\kappa_s-\varphi_s}_{L^2} \bigg)\\
&\le c\bigg( \norm{\kappa-\varphi}_{L^2}\left(\norm{\kappa}_{\dot H^1}^3(\norm{\kappa}_{L^2}^2+1) +\norm{\varphi}_{\dot H^1}^3(\norm{\varphi}_{L^2}^2+1)\right) \\
&\quad  + \norm{\kappa-\varphi}_{\dot H^1} \left(\norm{\kappa}_{\dot H^1}^2\left(\norm{\kappa}_{L^2}+1\right)+\norm{\varphi}_{\dot H^1}^2\left(\norm{\varphi}_{L^2}+1\right)\right)\bigg) \,.
\end{aligned}
\end{equation}
Here we have used Lemma \ref{lem:GN} in the third inequality, and in the fourth we used the estimates \eqref{wtT_bound} and \eqref{T_H1bound} to bound $\tau^\kappa$, $\tau^\varphi$ in terms of $\kappa$, $\varphi$.

Using \eqref{R_lip}, for $\kappa,\varphi\in B_{M_0}(\mc{Y}_0)\cap B_{M_1}(\mc{Y}_1)$ we have 
\begin{align*}
&\norm{\Psi[\kappa]-\Psi[\varphi]}_{\mc{Y}_1}\\
 &\quad \le \sup_{t\in[0,T]}\min\{t^{1/4},1\}\,e^{t\lambda_1}\int_0^t \norm{e^{-(t-t')\mc{L}}\big(\mc{R}[\kappa]-\mc{R}[\varphi]\big)_s}_{\dot H^1} \,dt'  \\
&\quad \le \sup_{t\in[0,T]} \min\{t^{1/4},1\}\,e^{t\lambda_1}\int_0^t \max\{(t-t')^{-1/2},1\}\, e^{-(t-t')\lambda_1}\norm{\mc{R}[\kappa]-\mc{R}[\varphi]}_{L^2}  \,dt'  \\
&\quad \le c\sup_{t\in[0,T]}\min\{t^{1/4},1\}\int_0^t \max\{(t-t')^{-1/2},1\}\max\{(t')^{-3/4},1\}\, e^{-2t'\lambda_1}\, dt'\; \bigg(\norm{\kappa-\varphi}_{\mc{Y}_0} \big(\norm{\kappa}_{\mc{Y}_1}^3(\norm{\kappa}_{\mc{Y}_0}^2+1) \\
&\hspace{2cm}+\norm{\varphi}_{\mc{Y}_1}^3(\norm{\varphi}_{\mc{Y}_0}^2+1) \big) +\norm{\kappa-\varphi}_{\mc{Y}_1} \big(\norm{\kappa}_{\mc{Y}_1}^2(\norm{\kappa}_{\mc{Y}_0}+1)+\norm{\varphi}_{\mc{Y}_1}^2(\norm{\varphi}_{\mc{Y}_0}+1) \big) \bigg)  \\
&\quad \le c\,M_1^3(M_0^2+1)\norm{\kappa-\varphi}_{\mc{Y}_0} + c\,M_1^2(M_0+1)\norm{\kappa-\varphi}_{\mc{Y}_1}\,,
\end{align*} 
and, by a similar series of estimates,
\begin{align*}
\norm{\Psi[\kappa]-\Psi[\varphi]}_{\mc{Y}_0}
& \le c\,M_1^3(M_0^2+1)\norm{\kappa-\varphi}_{\mc{Y}_0} + c\,M_1^2(M_0+1)\norm{\kappa-\varphi}_{\mc{Y}_1}\,.
\end{align*} 
For fixed $M_0$, taking $M_1$ small enough that both $c\,M_1^3(M_0^2+1)<\frac{1}{2}$ and $c\,M_1^2(M_0+1)<\frac{1}{2}$, by the contraction mapping theorem there exists a unique fixed point of the map $\Psi$ in $B_{M_0}(\mc{Y}_0)\cap B_{M_1}(\mc{Y}_1)$. 
Furthermore, in the case of small data, combining the bounds \eqref{BM_Y0bound} and \eqref{BM_Y1bound}, and using that $M_0=2\norm{\kappa_{\rm in}}_{L^2}$ and $M_1=2c\norm{\kappa_{\rm in}}_{L^2}$, we obtain the estimate \eqref{exp_bound}.
\end{proof}

\begin{remark}\label{rem:bootstrap}
Here we sketch a bootstrapping argument for showing that $\kappa$ is in fact smooth for all positive times.
By a calculation analogous to that above equation \eqref{BM_Y0bound}, it can be shown that $\kappa(s,t)$ in fact belongs to $H^2(I)$ for any positive time. Using this additional regularity on the right hand side of the equation \eqref{noforce_eqn2} for $\tau$, we may show that $\tau\in H^2(I)$ as well.
Then, applying $\mc{L}^{1-\epsilon/4}$ to the Duhamel formula \eqref{theta_mild} for $\kappa$:
\begin{align*}
\mc{L}^{1-\epsilon/4}\kappa(s,t) = \mc{L}^{1-\epsilon/4}e^{-(t-t_0)\mc{L}}\kappa\big|_{t=t_0} + \int_{t_0}^t \mc{L}^{1-\epsilon/4}e^{-(t-t')\mc{L}}(\mc{R}[\kappa(s,t')])_s \, dt'\,,
\end{align*}
since $(\mc{R}[\kappa(s,t')])_s\in L^2(I)$ (see \eqref{remainder}), we may use 
 Lemma \ref{lem:smoothing} to obtain that $\kappa\in C([t_0,T];H^{4-\epsilon})$ for any $\epsilon,t_0>0$. Note that we can also show that $\kappa\in L^2([t_0,T];H^4(I))$ by applying the full operator $\mc{L}$ above and estimating the integral term in $L^2$ in time. 

Passing to higher regularity becomes a bit tricky with the boundary conditions \eqref{noforce_BCs}: if we were instead working on the torus, we could simply apply $\mc{L}^k$, $k\ge1$, to the Duhamel formula, and use that $\mc{L}$ commutes with the semigroup $e^{-t\mc{L}}$.
 However, on the interval $I$, this requires $(\mc{R}[\kappa])_s\in D(\mc{L})$, which is not necessarily the case since, e.g., $\tau_s\kappa_s$ does not satisfy the boundary conditions \eqref{noforce_BCs}. 

 Instead, we can use that $\dot\kappa\in L^2([t_0,T];L^2(I))$ since the right hand side of the $\dot\kappa$ equation \eqref{noforce_eqn1} now belongs to $L^2([t_0,T];L^2(I))$. Taking a time derivative of the equation \eqref{noforce_eqn1} for $\kappa$, we have
 \begin{align*}
 \p_t(\dot\kappa) &= -\mc{L}\dot\kappa +(2+\gamma)(3\kappa^2\dot\kappa)_{ss} +(\tau\dot\kappa)_{ss}  + (2+\gamma)(\dot\tau\kappa)_{ss} +(1+\gamma)(\tau_s\dot\kappa-\dot\tau\kappa_s)_s \,.
 \end{align*}
Here $\dot\tau$ satisfies the elliptic equation
\begin{align*}
\dot\tau_{ss}-\frac{\kappa^2}{1+\gamma}\dot\tau &=\frac{1}{1+\gamma}\bigg(2\kappa\dot\kappa\tau + 4\kappa^3\dot\kappa + 2((\kappa_s\dot\kappa)_s-\kappa_{ss}\dot\kappa)-(4+3\gamma)(\dot\kappa_s\kappa+\kappa_s\dot\kappa)_s  \bigg)\,,
\end{align*}
and we have that at each time $t>t_0$, $\norm{\dot\tau}_{H^1(I)}\le c\big(\norm{\kappa}_{H^2}^3\norm{\dot\kappa}_{L^2} +\norm{\kappa}_{H^1}\norm{\dot\kappa}_{\dot H^1}\big)$.
  Using the Duhamel formula for $\dot\kappa$, we have
 \begin{align*}
 \dot\kappa &= e^{-(t-t_0)\mc{L}}\dot\kappa\big|_{t=t_0} +\int_{t_0}^t e^{-(t-t')\mc{L}}\bigg((\mc{R}_1[\kappa(s,t')])_{ss}+(\mc{R}_2[\kappa(s,t')])_s\bigg) \, dt'\,,
 \end{align*}
where $\mc{R}_1=(2+\gamma)3\kappa^2\dot\kappa +\tau\dot\kappa +(2+\gamma)\dot\tau\kappa$ and $\mc{R}_2=(1+\gamma)(\tau_s\dot\kappa-\dot\tau\kappa_s)$. Since we need $\dot\kappa\in H^1(I)$ to make sense of $\dot\tau$, we can again obtain estimates for $\dot\kappa$ in a time-weighted space analogous to $\mc{Y}_0\cap\mc{Y}_1$. In particular, we can estimate 
\begin{align*}
\sup_{t\in[t_0,T]} \bigg(\norm{\dot\kappa}_{L^2(I)} + \min\{(t-t_0)^{1/4},1\}\norm{\dot\kappa}_{\dot H^1(I)} \bigg)
\end{align*}
in a similar way as in estimates \eqref{BM_Y0bound} and \eqref{BM_Y1bound}, and the above argument begins to repeat. Repeating this, we can show that $\p_t^k\kappa\in L^2([kt_0,T];H^4(I))$. To obtain spatial regularity from the time regularity, we may use that, once $\dot\kappa\in L^2([2t_0,T];H^4(I))$, we have $\mc{L}\dot\kappa\in L^2([2t_0,T];L^2(I))$, and, using the equation \eqref{noforce_eqn1} for $\dot\kappa$, $\mc{L}(-\mc{L}\kappa+(2+\gamma)(\kappa^3)_{ss} +(2+\gamma)(\tau_s\kappa)_s+(\tau\kappa_s)_s) \in L^2([2t_0,T];L^2(I))$, i.e. $\kappa\in L^2([2t_0,T];H^8(I))$.
\end{remark}


\section{Periodic active forcing and swimming}\label{sec:periodic}
We now consider the system \eqref{theta_eqn}--\eqref{BCs} with nonzero $T$-periodic active forcing $\kappa_0(s,t)$ along the filament, corresponding to a nonzero preferred curvature, as derived in \cite{camalet2000generic,thomases2017role}.
 In particular, at a fixed time $t$, let $\kappa_0(s)$ denote the preferred curvature of the fiber. For a planar fiber, recalling that $\X_{ss}=\kappa(s)\be_{\rm n}(s)$ and $(\be_{\rm n})_s=-\kappa(s)\be_{\rm t}(s)$, we may compute the elastic energy of the fiber as
\begin{equation}\label{fvariational}
\begin{aligned}
G[\X] &= \frac{1}{2}\int_0^1\bigg(\big(\kappa(s)-\kappa_0(s)\big)^2 + \lambda(s)(\abs{\X_s}^2-1)\bigg)\, ds \\
&= \frac{1}{2}\int_0^1\bigg(\abs{\X_{ss}-\kappa_0\be_{\rm n}}^2 + \lambda(\abs{\X_s}^2-1) \bigg)\, ds\,, 
\end{aligned}
\end{equation}
along with the boundary conditions $(\X_{ss}-\kappa_0\be_{\rm n})\big|_{s=0,1}=0$, $(\X_{ss}-\kappa_0\be_{\rm n})_s\big|_{s=0,1}=0$, $\lambda|_{s=0,1}=0$. The Lagrange multiplier $\lambda(s)$ is equal to the tension $\tau$ when $\kappa_0\equiv 0$. Taking the variation and integrating by parts yields 
\begin{align*}
\frac{d}{d\varepsilon}\bigg|_{\varepsilon=0}G[\X+\varepsilon\bm{Y}]  &= -\int_0^1 {\bm Y}_s\cdot\bigg(\X_{sss}-(\kappa_0)_s\be_{\rm n} - \kappa_0(\be_{\rm n})_s -\lambda\X_s\bigg)\, ds \,. 
\end{align*}
Defining the tension of the fiber to be $\tau=\lambda-\kappa\kappa_0$, we obtain the form of forcing appearing on the right hand side of \eqref{classical}.

\subsection{Existence of solution with force}
Given the complicated way that the forcing terms due to $\kappa_0$ enter the system \eqref{theta_eqn}--\eqref{BCs}, we will first need to establish existence and uniqueness of solutions for $t\in[0,T]$ for sufficiently small $\kappa_0$. In particular, we show the following lemma.

\begin{lemma}[Well-posedness with forcing]\label{lem:wellpo_force}
There exists a constant $0<M<1$ such that, given a preferred curvature $\kappa_0(s,t)\in C^1([0,T]; H^1(I))$ and an initial condition $\overline\kappa_{\rm in}\in L^2(I)$ satisfying 
\begin{align*}
cT\bigg(\sup_{0\le t\le T}\norm{\dot\kappa_0}_{L^2(I)}\bigg)= c_1M\,, \quad \sup_{0\le t\le T}\norm{\kappa_0}_{H^1}=c_2M\,, \qquad \norm{\overline\kappa_{\rm in}}_{L^2}=c_3M\,,
\end{align*}
for $c_1>0$, $c_2>0$, $c_3\ge 0$, and $c_1+c_3\le \frac{1}{2}$,
there exists a unique mild solution $\overline\kappa\in C([0,T];L^2(I))\cap C((0,T];H^1(I))$ to the forced system \eqref{theta_eqn}--\eqref{BCs} satisfying
 \begin{equation}\label{force_thm_bd}
 \sup_{0\le t\le T}\bigg(\norm{\overline\kappa}_{L^2(I)}+ \min\{t^{1/4},1\}\norm{\overline\kappa}_{\dot H^1(I)}\bigg) \le c M\,.
 \end{equation} 
\end{lemma}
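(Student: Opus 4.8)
The plan is to adapt the contraction-mapping argument from the proof of Theorem \ref{thm:existence} to the forced system \eqref{theta_eqn}--\eqref{BCs}, working in the same time-weighted spaces $\mc{Y}_0 = C([0,T];L^2(I))$ and $\mc{Y}_1 = C((0,T];\dot H^1(I))$ with norms $\norm{\cdot}_{\mc{Y}_0} = \sup_t e^{t\lambda_1}\norm{\cdot}_{L^2}$ and $\norm{\cdot}_{\mc{Y}_1} = \sup_t \min\{t^{1/4},1\}e^{t\lambda_1}\norm{\cdot}_{\dot H^1}$, but now over the fixed interval $[0,T]$. Writing the evolution equation \eqref{theta_eqn} as $\dot{\overline\kappa} = -\mc{L}\overline\kappa - \dot\kappa_0 + (\mc{R}_{\kappa_0}[\overline\kappa])_s$, where $\mc{R}_{\kappa_0}$ collects the bracketed nonlinear and $\kappa_0$-dependent terms from \eqref{theta_eqn} (and incorporates the solution $\overline\tau$ of the elliptic equation \eqref{T_eqn}), the mild solution is
\begin{equation*}
\overline\kappa(s,t) = e^{-t\mc{L}}\overline\kappa_{\rm in} - \int_0^t e^{-(t-t')\mc{L}}\dot\kappa_0(s,t')\,dt' + \int_0^t e^{-(t-t')\mc{L}}\big(\mc{R}_{\kappa_0}[\overline\kappa(s,t')]\big)_s\,dt'\,.
\end{equation*}
Define $\Psi[\overline\varphi]$ by this formula with $\overline\varphi$ in place of $\overline\kappa$ inside $\mc{R}_{\kappa_0}$, and aim to show $\Psi$ maps a ball $B_{M_0}(\mc{Y}_0)\cap B_{M_1}(\mc{Y}_1)$ into itself and contracts, for $M_0, M_1 \sim M$ with $M$ small.

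First I would record an elliptic estimate for $\overline\tau$ in terms of $\overline\kappa$ and $\kappa_0$, in the spirit of Lemma \ref{lem:T_lemma}: since $\kappa_0 \in H^1(I)$ with $\norm{\kappa_0}_{H^1}$ small, the bilinear form associated to \eqref{T_eqn} is still coercive on $H^1_0(I)$, and Lax--Milgram plus Gagliardo--Nirenberg (Lemma \ref{lem:GN}) gives $\norm{\overline\tau}_{H^1} \le c\,P(\norm{\overline\kappa}_{\dot H^1}, \norm{\overline\kappa}_{L^2}, \norm{\kappa_0}_{H^1})$ for an explicit polynomial $P$ with every term containing either two powers of $\norm{\overline\kappa}_{\dot H^1}$ or at least one power of $\norm{\kappa_0}_{H^1}$ — crucially, no term in $\overline\tau$ that is linear in $\overline\kappa$ without a $\kappa_0$ factor, so in the regime $\norm{\overline\kappa}_{\mc{Y}_1}, \norm{\kappa_0}_{H^1} \lesssim M$ one gets $\norm{\overline\tau(\cdot,t)}_{H^1} \lesssim M^2 e^{-2t'\lambda_1}\min\{(t')^{-1/2},1\}(\cdots) + M\cdot(\text{lower order})$. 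A parallel Lipschitz estimate $\norm{\overline\tau^{\overline\kappa}-\overline\tau^{\overline\varphi}}_{H^1}$, following \eqref{wtT_bound} verbatim but tracking the extra $\kappa_0$ terms, is also needed for the contraction. Then I would estimate $\norm{\mc{R}_{\kappa_0}[\overline\kappa](\cdot,t)}_{L^2}$: the genuinely nonlinear terms are handled exactly as in \eqref{Rest_L2}, giving $\lesssim \norm{\overline\kappa}_{\dot H^1}^3(\norm{\overline\kappa}_{L^2}+1)$; the mixed terms such as $\kappa_0^2\overline\kappa_s$, $\overline\kappa^2(\kappa_0)_s$, $\overline\kappa\kappa_0(\kappa_0)_s$, and the terms involving $\overline\tau$ paired with $\kappa_0$, each carry a factor of $\norm{\kappa_0}_{H^1} \lesssim M$ together with at most one $\norm{\overline\kappa}_{\dot H^1}$ and harmless $L^2$/$L^\infty$ factors — this is what keeps the forced nonlinearity subcritical. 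After applying Lemma \ref{lem:smoothing} (the $\max\{(t-t')^{-1/2},1\}$ smoothing for the outer $\p_s$) and integrating the resulting $(t-t')^{-1/2}(t')^{-3/4}e^{-2t'\lambda_1}$-type kernel, the two Duhamel integrals contribute $\lesssim \max\{t^{-1/4},1\}e^{-t\lambda_1}(M_1^3(M_0+1) + M M_1)$ in the $\mc{Y}_1$ norm. The inhomogeneous term is bounded directly: $\norm{\int_0^t e^{-(t-t')\mc{L}}\dot\kappa_0\,dt'}_{\dot H^1} \le c\int_0^t \max\{(t-t')^{-1/4},1\}e^{-(t-t')\lambda_1}\norm{\dot\kappa_0}_{L^2}\,dt' \le c\,T\sup_t\norm{\dot\kappa_0}_{L^2} = c_1 M$ (using $T < \infty$ to absorb the $t'$-integral, and similarly $\lesssim c_1 M$ in $\mc{Y}_0$), which is precisely why the hypothesis bounds $T\sup\norm{\dot\kappa_0}_{L^2}$ rather than $\norm{\dot\kappa_0}_{L^2}$ alone. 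Finally the homogeneous term $e^{-t\mc{L}}\overline\kappa_{\rm in}$ contributes $\le \norm{\overline\kappa_{\rm in}}_{L^2} = c_3 M$ in $\mc{Y}_0$ and, by Lemma \ref{lem:smoothing}, $\lesssim \norm{\overline\kappa_{\rm in}}_{L^2} \lesssim M$ in $\mc{Y}_1$ (small-data branch — no need for Lemma \ref{lem:init_dat} here since $\overline\kappa_{\rm in}$ is already assumed small).

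Assembling these, choosing $M_0 \sim M$ and $M_1 \sim M$ (with the constants $c_1,c_2,c_3$ as in the statement, and $c_1+c_3 \le \tfrac12$ ensuring the homogeneous plus inhomogeneous contributions sit below $M_0/2$ and $M_1/2$), the self-mapping estimates $\norm{\Psi[\overline\varphi]}_{\mc{Y}_0} \le c_3 M + c_1 M + c(M^3 + M^2) \le M_0$ and $\norm{\Psi[\overline\varphi]}_{\mc{Y}_1} \le cM + c_1 M + c(M^3+M^2) \le M_1$ hold for $M$ small; and the contraction estimate $\norm{\Psi[\overline\kappa]-\Psi[\overline\varphi]}_{\mc{Y}_0\cap\mc{Y}_1} \le c(M^2 + M)\norm{\overline\kappa-\overline\varphi}_{\mc{Y}_0\cap\mc{Y}_1}$, which shrinks for $M$ small, follows from the Lipschitz bounds on $\mc{R}_{\kappa_0}$ and $\overline\tau$. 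The fixed point is the desired mild solution, and reading off the resulting bound gives \eqref{force_thm_bd}. The main obstacle — the same one flagged in Remark \ref{rmk} and \S\ref{subsec:tension} — is controlling $\overline\tau$: one must verify that every $\overline\tau$-term in the forced equation \eqref{theta_eqn} is estimated using at most the smoothing budget $t^{-3/4}$ allotted by three factors of $\norm{\overline\kappa}_{\dot H^1}$ (or is rescued by a small $\kappa_0$ factor), since the elliptic bound for $\overline\tau$ inevitably costs two powers of $\norm{\overline\kappa}_{\dot H^1}$; and one must confirm that the $\kappa_0$-dependent additions to the tension equation do not spoil coercivity or introduce terms that are linear in $\overline\kappa$ with no compensating smallness. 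Everything else is a bookkeeping variant of the proof of Theorem \ref{thm:existence}.
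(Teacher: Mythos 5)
Your proposal is correct and follows essentially the same route as the paper's proof: forced elliptic and Lipschitz estimates for $\overline\tau$ analogous to Lemma \ref{lem:T_lemma}, then a contraction for the three-term Duhamel formula, with the inhomogeneous term controlled by $T\sup_{t}\norm{\dot\kappa_0}_{L^2}$ exactly as you describe. The only (cosmetic) difference is that the paper works in the single combined norm $\mc{Y}_T$ of \eqref{YT_def}, without the exponential weight $e^{t\lambda_1}$, rather than in the weighted pair $\mc{Y}_0\cap\mc{Y}_1$ from the proof of Theorem \ref{thm:existence}; on a fixed finite interval this only affects constants.
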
 

\begin{proof}
We begin by deriving estimates for the forced tension equation \eqref{T_eqn}. Recalling the definition of the bilinear form $\mc{B}$ \eqref{T_bilinear}, where now we write $\kappa=\overline\kappa+\kappa_0$, we have that a weak solution of \eqref{T_eqn} satisfies 
\begin{equation}\label{T_weaksol_forced}
\begin{aligned}
\mc{B}(\overline\tau,\phi) &= -\int_0^1\frac{1}{1+\gamma}\bigg(\overline\kappa(\overline\kappa+\kappa_0)^2(\overline\kappa+2\kappa_0)\phi + \overline\kappa_s(\overline\kappa+\kappa_0)_s\phi\\
&\qquad+(1+\gamma)\big(\overline\kappa(\overline\kappa+2\kappa_0)\big)_s\phi_s+(2+\gamma)\overline\kappa_s(\overline\kappa+\kappa_0)\phi_s  \bigg) \,ds \qquad \forall\,\phi\in H^1_0(I)\,.
\end{aligned}
\end{equation}

Existence of a weak solution $\overline\tau\in H^1_0(I)$ again follows by Lax-Milgram, and, using Lemma \ref{lem:GN} as in the proof of the unforced estimate \eqref{T_H1bound}, we have that $\overline\tau$ satisfies
\begin{align*}
\mc{B}(\overline\tau,\overline\tau) &\le c\bigg( \norm{\overline\kappa}_{\dot H^1}\big(\norm{\overline\kappa}_{\dot H^1}\norm{\overline\kappa}_{L^2}+\norm{\kappa_0}_{H^1}\norm{\kappa_0}_{L^2}\big)\norm{(\overline\kappa+\kappa_0)\overline\tau}_{L^2}\\
&\qquad + \norm{\overline\kappa}_{\dot H^1}\big(\norm{\overline\kappa}_{\dot H^1}+\norm{\kappa_0}_{H^1}\big)\norm{\overline\tau}_{H^1}  \bigg)\,.
\end{align*}

Applying Young's inequality and again using that $\norm{\overline\tau}_{H^1}^2\le c\norm{\overline\tau_s}_{L^2}^2\le c\mc{B}(\overline\tau,\overline\tau)$, we obtain
\begin{equation}\label{Tf_bound}
\norm{\overline\tau}_{H^1}\le c\big( \norm{\overline\kappa}_{\dot H^1}^2+\norm{\kappa_0}_{H^1}^2\big)\big(\norm{\overline\kappa}_{L^2}+\norm{\kappa_0}_{L^2}+1\big) \,.
\end{equation}

We also need to amend the Lipschitz estimate of Lemma \ref{lem:T_lemma} to account for the presence of the forcing terms. As before, given $\overline\kappa$, $\overline\varphi\in H^1(I)$, we define $\overline\tau^\kappa$ and $\overline\tau^\varphi$ to be the $H^1_0$ solutions to the forced tension equation \eqref{T_eqn} using $\overline\kappa$ and $\overline\varphi$, respectively, on the right hand side. We have that the difference $\wh \tau:=\overline\tau^\kappa-\overline\tau^\varphi$ now satisfies
 \begin{align*}
 \wh \tau_{ss}-\frac{(\overline\kappa+\kappa_0)^2+(\overline\varphi+\kappa_0)^2}{2(1+\gamma)}\wh \tau
 &=\frac{1}{1+\gamma}\bigg(\frac{1}{2}(\overline\kappa-\overline\varphi)(\overline\kappa+\overline\varphi+2\kappa_0)(\overline\tau^\kappa+\overline\tau^\varphi)\\
 &\qquad +(\overline\kappa-\overline\varphi)\big(\overline\kappa^3+\overline\varphi^3+(\overline\kappa\overline\varphi+5\kappa_0^2)(\overline\kappa+\overline\varphi)+4\kappa_0(\overline\kappa^2+\overline\kappa\overline\varphi+\overline\varphi^2)+2\kappa_0^3\big)\\
 &\qquad + (\overline\kappa-\overline\varphi)_s(\overline\kappa+\overline\varphi+\kappa_0)_s-(1+\gamma)\big((\overline\kappa-\overline\varphi)(\overline\kappa+\overline\varphi+2\kappa_0)\big)_{ss}\\
  &\qquad -(2+\gamma)\big((\overline\kappa-\overline\varphi)_s(\overline\kappa+\kappa_0)+(\overline\kappa-\overline\varphi)\overline\varphi_s\big)_s\bigg)\,.
 \end{align*}
Proceeding similarly to the proof of Lemma \ref{lem:T_lemma} and using the bound \eqref{Tf_bound} to estimate the first term involving $\overline\tau^\kappa+\overline\tau^\varphi$, we have
\begin{align*}
\norm{\wh\tau}_{H^1}^2&\le c\int_0^1\bigg( \wh \tau_s^2+\frac{(\overline\kappa+\kappa_0)^2+(\overline\varphi+\kappa_0)^2}{2(1+\gamma)}\wh \tau^2\bigg)\,ds\\
&\le 
c\norm{\overline\kappa-\overline\varphi}_{L^2}
 \big( \norm{\overline\kappa}_{\dot H^1}^2+ \norm{\overline\varphi}_{\dot H^1}^2+\norm{\kappa_0}_{H^1}^2\big)\big(\norm{\overline\kappa}_{L^2}+\norm{\overline\varphi}_{L^2}+\norm{\kappa_0}_{L^2}+1\big)^2 \norm{\wh\tau}_{L^\infty}\\
 &\qquad +c\norm{\overline\kappa-\overline\varphi}_{L^2}\big(\norm{\overline\kappa}_{L^6}^3+\norm{\overline\varphi}_{L^6}^3+\norm{\kappa_0}_{L^6}^3 \big)\norm{\wh\tau}_{L^\infty}\\
 &\qquad + c\norm{\overline\kappa-\overline\varphi}_{\dot H^1}(\norm{\overline\kappa}_{\dot H^1}+\norm{\overline\varphi}_{\dot H^1}+\norm{\kappa_0}_{H^1})\norm{\wh\tau}_{H^1}\,.
\end{align*}
Using Lemma \ref{lem:GN} to absorb the $L^6$ terms into the first expression and applying Young's inequality, we obtain the Lipschitz bound
\begin{equation}\label{Tf_lip_bound}
\begin{aligned}
\norm{\overline\tau^\kappa-\overline\tau^\varphi}_{H^1} &\le
c\norm{\overline\kappa-\overline\varphi}_{L^2}
 \big( \norm{\overline\kappa}_{\dot H^1}^2+ \norm{\overline\varphi}_{\dot H^1}^2+\norm{\kappa_0}_{H^1}^2\big)\big(\norm{\overline\kappa}_{L^2}+\norm{\overline\varphi}_{L^2}+\norm{\kappa_0}_{L^2}+1\big)^2\\
&\qquad
 + c\norm{\overline\kappa-\overline\varphi}_{\dot H^1}\big(\norm{\overline\kappa}_{\dot H^1}+\norm{\overline\varphi}_{\dot H^1}+\norm{\kappa_0}_{H^1}\big) \,.
\end{aligned}
\end{equation}

Equipped with estimates \eqref{Tf_bound} and \eqref{Tf_lip_bound}, we now consider the forced evolution equation \eqref{theta_eqn}. We take $\overline\kappa_{\rm in}(s)=\overline\kappa(s,0)$ to be the initial difference from the fiber's preferred curvature and recall the definition \eqref{L_op} of the operator $\mc{L}$. We may then write the mild solution formula for \eqref{theta_eqn}--\eqref{BCs}, which now consists of three terms: 
\begin{equation}\label{theta_mild_f}
\overline\Psi[\overline\kappa](s,t) = e^{-t\mc{L}}\overline\kappa_{\rm in}(s) -\int_0^t e^{-(t-t')\mc{L}}\,\dot\kappa_0(s,t')\,dt'+ \int_0^t e^{-(t-t')\mc{L}}(\overline{\mc{R}}[\overline\kappa(s,t')])_s \, dt' \,.
\end{equation}
Here $(\overline{\mc{R}}[\overline\kappa(s,t')])_s$ contains all nonlinear terms in equation \eqref{theta_eqn}; in particular, 
\begin{equation}\label{Rf_def}
\begin{aligned}
\overline{\mc{R}}[\overline\kappa]&=  
3(2+\gamma)\overline\kappa(\overline\kappa+2\kappa_0)\overline\kappa_s+(5+3\gamma)\kappa_0^2\overline\kappa_s+ (5+2\gamma)\overline\kappa^2(\kappa_0)_s  \nonumber \\
&\hspace{3cm}
+2(3+\gamma)\overline\kappa\kappa_0(\kappa_0)_s +(2+\gamma)\overline\tau_s(\overline\kappa+\kappa_0)+\overline\tau(\overline\kappa+\kappa_0)_s  \,.
\end{aligned}
\end{equation}

We show that \eqref{theta_mild_f} admits a unique fixed point. We define $\mc{Y}_T$ as the space of functions for which the following norm is finite: 
\begin{equation}\label{YT_def}
\norm{\cdot}_{\mc{Y}_T} := \sup_{0\le t\le T}\bigg(\norm{\cdot}_{L^2(I)}+\min\{t^{1/4},1\}\norm{\cdot}_{\dot H^1(I)}\bigg)\,.
\end{equation} 
Considering a ball of radius $M$ in $\mc{Y}_T$,  
\begin{align*}
B_M(\mc{Y}_T)=\{ u\in \mc{Y}_T\, : \, \norm{u}_{\mc{Y}_T}\le M\}\,,
\end{align*}
we have that $\overline\Psi$ is bounded from $B_M(\mc{Y}_T)$ to $B_M(\mc{Y}_T)$ for some $M>0$. In particular, by Lemma \ref{lem:smoothing}, the second term of $\overline\Psi[\overline\kappa]$ involving $\dot\kappa_0$ satisfies
\begin{equation}\label{dotkap_est}
\begin{aligned}
\norm{\int_0^t e^{-(t-t')\mc{L}}\,\dot\kappa_0 \, dt'}_{\dot H^m(I)} 
&\le c\int_0^t \max\{(t-t')^{-m/4},1\}\,e^{-(t-t')\lambda_1}\norm{\dot\kappa_0}_{L^2(I)} dt' \\
&\le c\,\max\{t^{1-m/4},1\}\bigg(\sup_{0\le t\le T}\norm{\dot\kappa_0}_{L^2(I)} \bigg)\,.
\end{aligned}
\end{equation} 

Furthermore, we may bound the nonlinear terms from $(\overline{\mc{R}}[\overline\kappa])_s$ following similar steps to the unforced estimate \eqref{Rest_L2} for $\mc{R}[\kappa]$. Using Lemma \eqref{lem:GN} and the tension estimate \eqref{Tf_bound}, we have
\begin{equation}\label{Rfest_L2}
\begin{aligned}
\norm{\overline{\mc{R}}[\overline\kappa(\cdot,t)]}_{L^2} 
&\le c\big(\norm{\overline\kappa}_{L^\infty}^2+\norm{\kappa_0}_{L^\infty}^2+\norm{\overline\tau}_{H^1}\big)\big(\norm{\overline\kappa}_{\dot H^1}+\norm{\kappa_0}_{H^1}\big) \\
&\le c\big( \norm{\overline\kappa}_{\dot H^1}^3+\norm{\kappa_0}_{H^1}^3\big)\big(\norm{\overline\kappa}_{L^2}+\norm{\kappa_0}_{L^2}+1\big) \, .
\end{aligned}
\end{equation}

By Lemma \ref{lem:smoothing}, we then have 
\begin{equation}\label{Rf_Hm_est}
\begin{aligned}
&\norm{\int_0^t e^{-(t-t')\mc{L}}\big(\overline{\mc{R}}[\overline\kappa(\cdot,t')]\big)_s dt'}_{\dot H^m(I)} \\
&\quad \le c\int_0^t \max\{(t-t')^{-(m+1)/4},1\}\,e^{-(t-t')\lambda_1}\norm{\overline{\mc{R}}[\overline\kappa]}_{L^2} dt' \\
&\quad \le c\int_0^t \max\{(t-t')^{-(m+1)/4},1\}\,e^{-(t-t')\lambda_1}\big( \norm{\overline\kappa}_{\dot H^1}^3+\norm{\kappa_0}_{H^1}^3\big)\big(\norm{\overline\kappa}_{L^2}+\norm{\kappa_0}_{L^2}+1\big)\, dt' \\
 &\quad \le c\int_0^t \max\{(t-t')^{-(m+1)/4},1\}\,\max\{(t')^{-3/4},1\}\,e^{-(t-t')\lambda_1} \,dt' \, \big(\norm{\overline\kappa}_{\mc{Y}_T}^3 \\
 &\hspace{9cm}+\norm{\kappa_0}_{\mc{Y}_T}^3\big)\left( \norm{\overline\kappa}_{\mc{Y}_T}+\norm{\kappa_0}_{\mc{Y}_T} +1 \right)  \\
 &\quad \le c\,\max\{t^{-m/4},1\}\big(\norm{\overline\kappa}_{\mc{Y}_T}^3+\norm{\kappa_0}_{\mc{Y}_T}^3\big)\left( \norm{\overline\kappa}_{\mc{Y}_T}+\norm{\kappa_0}_{\mc{Y}_T} +1 \right)\, . 
\end{aligned}
\end{equation} 

Combining the estimates \eqref{dotkap_est} and \eqref{Rf_Hm_est} for $\dot\kappa_0$ and $\overline{\mc{R}}[\overline\kappa]$, we then have 
\begin{equation}\label{Phif_est}
\begin{aligned}
\norm{\overline\Psi[\overline\kappa]}_{\mc{Y}_T} &\le \norm{\overline\kappa_{\rm in}}_{L^2} 
+cT\bigg(\sup_{0\le t\le T}\norm{\dot\kappa_0}_{L^2}\bigg)+ c\big(\norm{\overline\kappa}_{\mc{Y}_T}^3+\norm{\kappa_0}_{\mc{Y}_T}^3\big)\left( \norm{\overline\kappa}_{\mc{Y}_T}+\norm{\kappa_0}_{\mc{Y}_T} +1 \right)
\end{aligned}
\end{equation}
for $\overline\kappa\in B_M(\mc{Y}_T)$. Noting that $\norm{\kappa_0}_{\mc{Y}_T}\le c(T)\big(\sup_{0\le t\le T}\norm{\kappa_0}_{H^1}\big)$, we take the preferred curvature $\kappa_0(s,t)$ and initial condition $\overline\kappa_{\rm in}(s)$ to satisfy 
\begin{equation}\label{f_bounds}
cT\bigg(\sup_{0\le t\le T}\norm{\dot\kappa_0}_{L^2(I)}\bigg)= c_1M, \quad \sup_{0\le t\le T}\norm{\kappa_0}_{H^1}=c_2M, \qquad \norm{\overline\kappa_{\rm in}}_{L^2}=c_3M,
\end{equation}
where $c_1>0$, $c_2>0$, $c_3\ge 0$, and $c_1+c_3\le \frac{1}{2}$. We then have
\begin{equation}\label{Psif_Mest}
\norm{\overline\Psi[\overline\kappa]}_{\mc{Y}_T} \le \left(c_1+c_3+c(M^3+M^2)\right) M \le M
\end{equation}
for $M$ sufficiently small. \\

We now show that $\overline\Psi$ is a contraction on $B_M(\mc{Y}_T)$. Given $\overline\kappa$, $\overline\varphi\in B_M(\mc{Y}_T)$, we first amend the unforced Lipschitz estimate \eqref{R_lip} for $\mc{R}[\kappa]$ to account for the forcing terms in $\overline{\mc{R}}[\overline\kappa]$. Using the tension bound \eqref{Tf_bound} and tension Lipschitz estimate \eqref{Tf_lip_bound}, as well as Lemma \ref{lem:GN}, we have 
\begin{align*}
&\norm{\overline{\mc{R}}[\overline\kappa(\cdot,t)]-\overline{\mc{R}}[\overline\varphi(\cdot,t)]}_{L^2(I)} \\
&\qquad \le c\bigg(\norm{\overline\kappa-\overline\varphi}_{L^\infty}\big(\norm{\overline\kappa}_{L^\infty}+\norm{\overline\varphi}_{L^\infty}+\norm{\kappa_0}_{L^\infty}\big)\big(\norm{\overline\kappa}_{\dot H^1}+\norm{\kappa_0}_{H^1}\big)   \\
&\qquad\quad + \norm{\overline\kappa-\overline\varphi}_{\dot H^1}\big(\norm{\overline\varphi}_{L^\infty}^2+\norm{\kappa_0}_{L^\infty}^2\big) +\norm{\overline\tau^\kappa-\overline\tau^\varphi}_{H^1}\big(\norm{\overline\kappa}_{\dot H^1}+\norm{\kappa_0}_{H^1}\big) + \norm{\overline\kappa-\overline\varphi}_{\dot H^1}\norm{\overline\tau^\varphi}_{H^1}\bigg) \\
&\qquad \le c\bigg(\norm{\overline\kappa-\overline\varphi}_{L^2}
 \big( \norm{\overline\kappa}_{\dot H^1}^3+ \norm{\overline\varphi}_{\dot H^1}^3+\norm{\kappa_0}_{H^1}^3\big)\big(\norm{\overline\kappa}_{L^2}+\norm{\overline\varphi}_{L^2}+\norm{\kappa_0}_{L^2}+1\big)^2\\
&\qquad\quad
 + \norm{\overline\kappa-\overline\varphi}_{\dot H^1}\big(\norm{\overline\kappa}_{\dot H^1}^2+\norm{\overline\varphi}_{\dot H^1}^2+\norm{\kappa_0}_{H^1}^2\big)\big(\norm{\overline\varphi}_{L^2}+\norm{\kappa_0}_{L^2}+1\big) \bigg) \,.
\end{align*}

Then, using the above estimate along with Lemma \ref{lem:smoothing}, we have 
\begin{align*}
&\norm{\int_0^t e^{-(t-t')\mc{L}}\left(\overline{\mc{R}}[\overline\kappa]-\overline{\mc{R}}[\overline\varphi]\right)_s\,dt'}_{\mc{Y}_T}\\
 &\quad \le \sup_{t\in[0,T]} \int_0^t \bigg(\max\{(t-t')^{-1/4},1\}\\
 &\hspace{2cm}+\min\{t^{1/4},1\}\max\{(t-t')^{-1/2},1\}\bigg)e^{-(t-t')\lambda_1}\norm{\overline{\mc{R}}[\overline\kappa]-\overline{\mc{R}}[\overline\varphi] }_{L^2(I)}\,dt' \\
&\quad \le c\sup_{t\in[0,T]}\int_0^t e^{-(t-t')\lambda_1}\bigg(\max\{(t-t')^{-1/4},1\}\\
 &\hspace{2cm}+\min\{t^{1/4},1\}\max\{(t-t')^{-1/2},1\}\bigg) \max\{(t')^{-3/4},1\}\,dt' \, \bigg( \norm{\overline\kappa}_{\mc{Y}_T}^5 +\norm{\overline\varphi}_{\mc{Y}_T}^5+\norm{\kappa_0}_{\mc{Y}_T}^5 \\
&\hspace{5cm}
+ \norm{\overline\kappa}_{\mc{Y}_T}^2 + \norm{\overline\varphi}_{\mc{Y}_T}^2 +\norm{\kappa_0}_{\mc{Y}_T}^2 \bigg)\norm{\overline\kappa-\overline\varphi}_{\mc{Y}_T} \\
&\quad \le c\norm{\overline\kappa-\overline\varphi}_{\mc{Y}_T}\bigg( M^5+ M^2+\norm{\kappa_0}_{\mc{Y}_T}^5+\norm{\kappa_0}_{\mc{Y}_T}^2 \bigg)\,.
\end{align*}

Using the bound \eqref{f_bounds} on $\kappa_0$, we therefore have
\begin{equation}\label{Psif_contraction}
\norm{\overline\Psi[\overline\kappa]-\overline\Psi[\overline\varphi]}_{\mc{Y}_T}
\le c(M^5+M^2)\norm{\overline\kappa-\overline\varphi}_{\mc{Y}_T}\,.
\end{equation}
Taking $M$ small enough that $c(M^5+M^2)\le\frac{1}{2}$, we obtain a contraction on $B_M(\mc{Y}_T)$, yielding Lemma \ref{lem:wellpo_force}. 
\end{proof}


\subsection{Existence of unique periodic solution}\label{subsec:unique_per}
We now show that given a $T$-periodic preferred curvature $\kappa_0$ satisfying the smallness assumptions of Lemma \ref{lem:wellpo_force}, there exists a unique $\overline\kappa_{\rm in}$ such that the solution $\overline\kappa$ to \eqref{theta_eqn}--\eqref{BCs} is $T$-periodic in time, and solutions starting from nearby initial data converge to this $T$-periodic $\overline\kappa$ over time.

\begin{proof}[Proof of Theorem \ref{thm:periodic}]
Given an initial condition $\overline\kappa_{\rm in}(s)$ and $T$-periodic preferred curvature $\kappa_0(s,t)$ satisfying the conditions of Lemma \ref{lem:wellpo_force}, we consider the unique solution $\overline\kappa(s,t)$ to the system \eqref{theta_eqn}--\eqref{BCs} at time $T$. Let $\Phi^T[\overline\kappa_{\rm in}]$ denote the time $T$ map $\overline\kappa_{\rm in}\mapsto\overline\kappa(s,T)$: 
\begin{equation}\label{PhiT_map}
\Phi^T[\overline\kappa_{\rm in}] = e^{-T\mc{L}}\overline\kappa_{\rm in} -\int_0^Te^{-(T-t')\mc{L}}\,\dot\kappa_0\, dt'+ \int_0^Te^{-(T-t')\mc{L}}\big(\overline{\mc{R}}[\overline\kappa]\big)_s\, dt' \,.
\end{equation}
We consider the map $\Phi^T$ from $B_{M'}(L^2(I))$ to $B_{M'}(L^2(I))$ and show that there exists a unique fixed point. Here $B_{M'}(L^2(I))$ is the ball 
\begin{align*}
B_{M'}(L^2(I))=\{ u\in L^2\, : \, \norm{u}_{L^2}\le M'\}
\end{align*}
where the constant $M'$ is given by
\begin{equation}\label{Mprime}
M'=\min\bigg(2c_1M, c_2M, \frac{M}{4}\bigg)
\end{equation} 
for $c_1$, $c_2$, and $M$ as in Lemma \ref{lem:wellpo_force}. 
 
We will again make use of the space $\mc{Y}_T$ \eqref{YT_def}, and note that for $u(s,t)\in \mc{Y}_T$, we have 
\begin{align*}
\norm{u(\cdot,T)}_{L^2(I)} \le \norm{u}_{\mc{Y}_T}\,.
\end{align*}

Taking $\overline\kappa_{\rm in}\in B_{M'}(L^2(I))$, we first note that, using the bounds \eqref{f_bounds} and the definition \eqref{Mprime} of $M'$, the estimate \eqref{Psif_Mest} on $\overline\Psi$ implies that the second and third terms on the right hand side of \eqref{PhiT_map} satisfy 
\begin{align*}
\norm{\int_0^Te^{-(T-t')\mc{L}}\,\dot\kappa_0\, dt'}_{L^2} &\le \frac{M'}{2}\,, \qquad
\norm{\int_0^Te^{-(T-t')\mc{L}}(\overline{\mc{R}}[\overline\kappa])_s\, dt'}_{L^2} \le \frac{M'}{4} \,.
\end{align*}

Furthermore, by Lemma \ref{lem:smoothing}, we have 
\begin{align*}
\norm{e^{-T\mc{L}}\overline\kappa_{\rm in}}_{L^2} &\le e^{-T\lambda_1}\norm{\overline\kappa_{\rm in}}_{L^2} \le e^{-T\lambda_1}M'\,.
\end{align*}
Taking the period $T$ large enough that $e^{-T\lambda_1}<\frac{1}{4}$, the time $T$ map satisfies 
\begin{align*}
\norm{\Phi^T[\overline\kappa_{\rm in}]}_{L^2(I)} \le M' \,.
\end{align*}

Next we need a Lipschitz estimate for $\Phi^T$. Given two initial conditions $\overline\kappa_{\rm in}$, $\overline\varphi_{\rm in}\in L^2(I)$, we begin by noting that, by Lemma \ref{lem:smoothing}, the difference in the linear evolution alone satisfies 
\begin{align*}
\norm{e^{t\lambda_1}\,e^{-t\mc{L}}(\overline\kappa_{\rm in}-\overline\varphi_{\rm in})}_{\mc{Y}_T} &= \sup_{0\le t\le T}e^{t\lambda_1}\bigg(\norm{e^{-t\mc{L}}(\overline\kappa_{\rm in}-\overline\varphi_{\rm in})}_{L^2}+ \min\{t^{1/4},1\}\norm{e^{-t\mc{L}}(\overline\kappa_{\rm in}-\overline\varphi_{\rm in})}_{\dot H^1}\bigg) \\
&\le \sup_{0\le t\le T}e^{t\lambda_1}\bigg(c e^{-t\lambda_1}\norm{\overline\kappa_{\rm in}-\overline\varphi_{\rm in}}_{L^2}\bigg)\\
&= c\norm{\overline\kappa_{\rm in}-\overline\varphi_{\rm in}}_{L^2(I)}\,.
\end{align*} 
Furthermore, using the estimate \eqref{Psif_contraction}, we may obtain the following bound on the difference in the evolution of the nonlinear terms: 
\begin{align*}
\norm{e^{t\lambda_1}\int_0^te^{-(t-t')\mc{L}}\big(\overline{\mc{R}}[\overline\kappa]-\overline{\mc{R}}[\overline\varphi]\big)_s\, dt'}_{\mc{Y}_T}
&\le \frac{1}{2}\norm{e^{t\lambda_1}\big(\overline\kappa-\overline\varphi\big)}_{\mc{Y}_T} \,.
\end{align*}

We then have 
\begin{align*}
\norm{e^{t\lambda_1}\big(\Phi^t[\overline\kappa_{\rm in}]-\Phi^t[\overline\varphi_{\rm in}]\big)}_{\mc{Y}_T} 
&\le c\norm{\overline\kappa_{\rm in}-\overline\varphi_{\rm in}}_{L^2}+ \frac{1}{2}\norm{e^{t\lambda_1}(\overline\kappa-\overline\varphi)}_{\mc{Y}_T} \,.
\end{align*}

Noting that $\Phi^t[\overline\kappa_{\rm in}](s)=\overline\kappa(s,t)$, we may subtract $\frac{1}{2}\norm{e^{t\lambda_1}(\overline\kappa-\overline\varphi)}_{\mc{Y}_T}$ from both sides to obtain 
\begin{equation}\label{periodic_bound}
\norm{e^{t\lambda_1}\big(\Phi^t[\overline\kappa_{\rm in}]-\Phi^t[\overline\varphi_{\rm in}]\big)}_{\mc{Y}_T} 
\le c\norm{\overline\kappa_{\rm in}-\overline\varphi_{\rm in}}_{L^2}\,.
\end{equation}
In particular, at time $T$ we have 
\begin{align*}
\norm{e^{T\lambda_1}\big(\Phi^T[\overline\kappa_{\rm in}]-\Phi^T[\overline\varphi_{\rm in}]\big)}_{L^2} \le \norm{e^{t\lambda_1}\big(\Phi^t[\overline\kappa_{\rm in}]-\Phi^t[\overline\varphi_{\rm in}]\big)}_{\mc{Y}_T} 
\le c\norm{\overline\kappa_{\rm in}-\overline\varphi_{\rm in}}_{L^2}\,,
\end{align*}
which we may rewrite as
\begin{equation}\label{periodic_contraction}
\norm{\Phi^T[\overline\kappa_{\rm in}]-\Phi^T[\overline\varphi_{\rm in}]}_{L^2(I)} \le c e^{-T\lambda_1}\norm{\overline\kappa_{\rm in}-\overline\varphi_{\rm in}}_{L^2(I)}\, .
\end{equation}
Taking the period $T$ of the preferred curvature $\kappa_0(s,t)$ to be large enough that $c e^{-T\lambda_1}<1$, equation \eqref{periodic_contraction} is a contraction on $L^2(I)$, yielding the unique periodic solution of Theorem \ref{thm:periodic}. \\

As a slight abuse of notation, we now let $\overline\kappa_{\rm in}$ denote the unique initial condition such that $\Phi^T[\overline\kappa_{\rm in}]=\overline\kappa_{\rm in}$.
The estimate \eqref{kappaH1} then follows from estimate \eqref{force_thm_bd} of Lemma \ref{lem:wellpo_force} since we must have $\overline\kappa_{\rm in}\in H^1(I)$. In particular, using the periodicity of $\overline\kappa=\Phi^t[\overline\kappa_{\rm in}]$, we have 
\begin{align*}
\norm{\overline\kappa_{\rm in}}_{H^1(I)} &= \norm{\overline\kappa(\cdot,T)}_{H^1(I)} \le c(T)\left(\norm{\overline\kappa(\cdot,T)}_{L^2(I)} + \min\{T^{1/4},1\}\norm{\overline\kappa(\cdot,T)}_{\dot H^1(I)} \right) \\
&\le c(T)\norm{\overline\kappa}_{\mc{Y}_T} \le c(T)M \, . 
\end{align*}
Furthermore, the bound \eqref{periodic_bound} implies that for any initial condition $\overline\varphi_{\rm in}\in L^2(I)$ satisfying $\norm{\overline\varphi_{\rm in}}_{L^2}\le M'$, the iterated time $T$ map $\Phi^{nT}[\overline\varphi_{\rm in}]$ satisfies 
\begin{align*}
 \norm{\overline\kappa_{\rm in}-\Phi^{nT}[\overline\varphi_{\rm in}]}_{L^2(I)} \le ce^{-nT\lambda_1}\norm{\overline\kappa_{\rm in}-\overline\varphi_{\rm in}}_{L^2(I)}, \quad n=1,2,\dots \,, 
 \end{align*} 
 which yields the convergence estimate \eqref{per_conv}. 
\end{proof}

\subsection{Swimming}\label{subsec:swim}
Finally, we calculate the small-amplitude fiber swimming speed stated in Theorem \ref{thm:swimming}. The proof will rely on two auxiliary lemmas, which we introduce below. 


To show the swimming expression \eqref{swim_speed1}, we will need to use the smallness and additional $H^3(I)$ regularity of $\kappa_0$ to show that the fiber tangent vector $\be_{\rm t}(s,t)$ is not varying greatly in space and time. In particular: 
\begin{lemma}\label{lem:middle}
Given a $T$-periodic preferred curvature $\kappa_0(s,t)\in C^1([0,T]; H^3(I))$ satisfying 
\begin{align*}
 T\bigg(\sup_{0\le t\le T}\norm{\dot\kappa_0}_{L^2(I)}\bigg)= \varepsilon \,, \quad \sup_{0\le t\le T}\norm{\kappa_0}_{H^1}=c_1\varepsilon 
  \end{align*}
 for some $0<\varepsilon<1$, let $(\overline\kappa,\overline\tau)=(\kappa-\kappa_0,\tau+\kappa_0^2)$ denote the unique $T$-periodic mild solution to \eqref{theta_eqn}-\eqref{BCs}. The corresponding evolution of the frame vector $\be_{\rm t}(s,t)$, given by \eqref{frame_ev} and \eqref{thetadot}, then satisfies
\begin{equation}\label{U0}
\begin{aligned}
\sup_{0\le t\le T}\norm{\be_{\rm t}(\cdot,t)- \be_{\rm t}(0,0)}_{L^2(I)} \le c\varepsilon\,.
\end{aligned}
\end{equation}
\end{lemma}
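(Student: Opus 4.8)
To prove the estimate \eqref{U0}, the natural strategy is to control $\be_{\rm t}(s,t)-\be_{\rm t}(0,0)$ by first controlling the tangent angle $\theta(s,t)-\theta(0,0)$, since $\be_{\rm t}=(\cos\theta,\sin\theta)^{\rm T}$ is Lipschitz in $\theta$ and hence $\abs{\be_{\rm t}(s,t)-\be_{\rm t}(0,0)}\le \abs{\theta(s,t)-\theta(0,0)}$ pointwise. So it suffices to show $\sup_{t}\norm{\theta(\cdot,t)-\theta(0,0)}_{L^2(I)}\le c\varepsilon$. I would split this into a spatial part, $\norm{\theta(\cdot,t)-\theta(0,t)}_{L^2}$, and a temporal part, $\abs{\theta(0,t)-\theta(0,0)}$ (noting $\theta(0,0)=\theta_{\rm in}(0)$ is the fixed reference axis). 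The spatial part is immediate: $\theta(s,t)-\theta(0,t)=\int_0^s\theta_{s'}\,ds'=\int_0^s\kappa\,ds' = \int_0^s(\overline\kappa+\kappa_0)\,ds'$, so it is bounded by $\norm{\overline\kappa}_{L^2}+\norm{\kappa_0}_{L^2}$, which is $\le c\varepsilon$ by Theorem \ref{thm:periodic} (estimate \eqref{kappaH1}) and the hypothesis on $\kappa_0$.

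The temporal part $\abs{\theta(0,t)-\theta(0,0)}$ is where the extra $H^3$ regularity of $\kappa_0$ is needed. Here I would integrate the evolution equation \eqref{thetadot} for $\dot\theta$ in time at $s=0$: $\theta(0,t)-\theta(0,0)=\int_0^t\dot\theta(0,t')\,dt'$. The right-hand side of \eqref{thetadot} is $-\kappa_{sss}+(2+\gamma)(\kappa^3)_{ss}+(2+\gamma)\tau_s\kappa+\tau\kappa_s+(\kappa_0)_{sss}-(1+\gamma)\kappa^2(\kappa_0)_s$, evaluated at $s=0$. Using the boundary conditions \eqref{BCs0} (so that $\kappa(0)=\kappa_0(0)$, $\kappa_s(0)=(\kappa_0)_s(0)$, $\tau(0)=-\kappa_0^2(0)$) several of these terms simplify or cancel at the endpoint, but the genuinely dangerous terms $\kappa_{sss}(0)$ and $(\kappa_0)_{sss}(0)$ require control of three spatial derivatives of $\kappa=\overline\kappa+\kappa_0$. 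For $\kappa_0$ this is supplied directly by the $H^3(I)$ hypothesis together with a trace/Sobolev embedding $H^3(I)\hookrightarrow C^2(\overline I)$, giving $\abs{(\kappa_0)_{sss}(0,t)}$ — wait, that is $H^3$-worth of derivatives evaluated pointwise, so one really wants $H^{7/2+}$ for a pointwise bound on the third derivative; more carefully one should not evaluate $\kappa_{sss}$ pointwise but rather work with the equation \eqref{classical} or \eqref{fiber_move} for $\dot\X$ directly. Indeed, a cleaner route is to use \eqref{frame_ev}–\eqref{fiber_move}: from \eqref{fiber_move}, $\dot\be_{\rm t}=\dot\theta\be_{\rm n}$ where $\dot\theta$ at interior points involves only $\overline\kappa_{sss}$ and lower, and one estimates $\norm{\dot\theta(\cdot,t)}_{L^2(I)}$ using the parabolic smoothing bounds for $\overline\kappa$ in $H^3$ coming from a higher-regularity version of Theorem \ref{thm:periodic}/Lemma \ref{lem:wellpo_force} (the bootstrapping of Remark \ref{rem:bootstrap} applies since $\kappa_0\in C^1([0,T];H^3)$), then $\norm{\be_{\rm t}(\cdot,t)-\be_{\rm t}(\cdot,0)}_{L^2}\le\int_0^t\norm{\dot\be_{\rm t}}_{L^2}\,dt'\le\int_0^t\norm{\dot\theta}_{L^2}\,dt'$. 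Combined with $\norm{\be_{\rm t}(\cdot,0)-\be_{\rm t}(0,0)}_{L^2}\le\norm{\theta(\cdot,0)-\theta(0,0)}_{L^2}=\norm{\int_0^{(\cdot)}\kappa_{\rm in}}_{L^2}\le c(\norm{\overline\kappa_{\rm in}}_{L^2}+\norm{\kappa_0(\cdot,0)}_{L^2})\le c\varepsilon$, this yields \eqref{U0}.

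\textbf{Main obstacle.} The crux is estimating $\int_0^t\norm{\dot\theta}_{L^2(I)}\,dt'$ (equivalently $\int_0^t\norm{\dot\be_{\rm t}}_{L^2}\,dt'$) by $c\varepsilon$. The terms in $\dot\theta$ are all at least linear in the small quantities $\overline\kappa$ (size $\varepsilon$ in $H^1$, and better in $H^3$ for $t$ bounded below by parabolic smoothing) and $\kappa_0$ (size $\varepsilon$ in $H^3$), so each contributes $O(\varepsilon)$ or smaller — \emph{except} that the term $-\overline\kappa_{sss}$ a priori costs a negative power of $t$ near $t=0$ if one only starts in $L^2$. The resolution is that for the periodic solution $\overline\kappa_{\rm in}\in H^1(I)$, and in fact, using the $H^3$-regularity of $\kappa_0$ and a time-weighted bootstrap as in Remark \ref{rem:bootstrap}, one can show $\sup_t\norm{\overline\kappa}_{H^3}\le c(T)\varepsilon$ away from an initial layer and $\min\{t^{1/2},1\}\norm{\overline\kappa}_{H^3}\le c(T)\varepsilon$ near $t=0$, so $\int_0^t\norm{\overline\kappa_{sss}}_{L^2}\,dt'\le c\varepsilon\int_0^t\min\{(t')^{-1/2},1\}\,dt'\le c\varepsilon$, which is integrable. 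So the real work is to set up this higher-regularity version of the well-posedness/periodicity estimates carefully and then bookkeep that every term of \eqref{thetadot} integrates in time to something $O(\varepsilon)$; the boundary-condition cancellations in \eqref{BCs0} are what make the worst endpoint terms manageable, and the $H^3$ hypothesis on $\kappa_0$ is exactly what is needed to close it.
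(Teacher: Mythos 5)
Your proposal is correct and, in its final form, is essentially the paper's own argument: split $\be_{\rm t}(\cdot,t)-\be_{\rm t}(0,0)$ into $\be_{\rm t}(\cdot,t)-\be_{\rm t}(\cdot,0)$ (controlled by $\int_0^t\norm{\dot\theta}_{L^2}\,dt'$, whose leading term $\overline\kappa_{sss}$ is handled by parabolic smoothing of the Duhamel formula up to $\dot H^3$ together with $T$-periodicity) plus $\be_{\rm t}(\cdot,0)-\be_{\rm t}(0,0)$ (controlled by $\norm{\kappa}_{L^2}\le c\varepsilon$ via $(\be_{\rm t})_s=\kappa\be_{\rm n}$). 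The only cosmetic discrepancy is the smoothing weight for $\dot H^3$ from $L^2$ data, which is $t^{3/4}$ rather than $t^{1/2}$; this does not affect your integrability argument, and the paper in fact removes the weight entirely by periodicity before taking the supremum.
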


In addition to Lemma \ref{lem:middle}, in order to show the swimming expansion \eqref{swim_speed2}, we will need to consider a nearby simpler problem.  
We define $(\overline\kappa^{\rm lin},\overline\tau^{\rm lin})\in C([0,T];H^1(I)\times H^1(I))$ to be the unique periodic solution to the linear evolution equation 
\begin{align}
\dot{\overline\kappa}^{\rm lin} &=-\overline\kappa^{\rm lin}_{ssss}-\dot\kappa_0  \label{theta_eqn2_2}\\
(1+\gamma)\bigg(\overline\tau^{\rm lin}_{ss}+\big(\overline\kappa^{\rm lin}(\overline\kappa^{\rm lin}+2\kappa_0)\big)_{ss}+\big(\overline\kappa^{\rm lin}_s(\overline\kappa^{\rm lin}+\kappa_0)\big)_s\bigg) &= -\overline\kappa^{\rm lin}_{ss}(\overline\kappa^{\rm lin}+\kappa_0)   \label{T_eqn2_2} \\
\overline\kappa^{\rm lin}\big|_{s=0,1}=0, \; \overline\kappa^{\rm lin}_s\big|_{s=0,1}&=0, \; \overline\tau^{\rm lin}\big|_{s=0,1}=0 \,,  \label{BCs_eqn2_2}
\end{align}
where $\overline\kappa^{\rm lin}(s,0):=\overline\kappa^{\rm lin}_{\rm in}$.
The existence of a unique periodic mild solution $(\overline\kappa^{\rm lin},\overline\tau^{\rm lin})$ follows by a simple modification of Section \ref{subsec:unique_per}, and can be shown to satisfy analogous estimates to \eqref{Tf_bound} and \eqref{kappaH1} for the full system \eqref{theta_eqn}-\eqref{BCs}. 

For sufficiently small $\kappa_0$, we have that the unique periodic solution $(\overline\kappa,\overline\tau)$ to \eqref{theta_eqn}-\eqref{BCs} is close to the unique periodic $(\overline\kappa^{\rm lin},\overline\tau^{\rm lin})$: 
\begin{lemma}\label{lem:lin_close}
Given a $T$-periodic preferred curvature $\kappa_0(s,t)$ satisfying 
 \begin{align*}
T\bigg(\sup_{0\le t\le T}\norm{\dot\kappa_0}_{L^2(I)}\bigg)= \varepsilon \,, \quad \sup_{0\le t\le T}\norm{\kappa_0}_{H^1}=c_1\varepsilon 
 \end{align*}
 for some $0<\varepsilon\ll1$, the unique $T$-periodic mild solutions $(\overline\kappa,\overline\tau)$ and $(\overline\kappa^{\rm lin},\overline\tau^{\rm lin})$ to \eqref{theta_eqn}-\eqref{BCs} and \eqref{theta_eqn2_2}-\eqref{BCs_eqn2_2}, respectively, satisfy the following:
\begin{align}
\sup_{0\le t\le T}\|\overline\kappa-\overline\kappa^{\rm lin}\|_{H^1} &\le c\varepsilon^3\, ; \label{kap_eps_bd}\\
\sup_{0\le t\le T}\|\overline\tau-\overline\tau^{\rm lin}\|_{H^1} &\le c\varepsilon^4 \,. \label{tau_eps_bd}
\end{align}
\end{lemma}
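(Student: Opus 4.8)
The plan is to establish \eqref{kap_eps_bd} first and then deduce \eqref{tau_eps_bd} from it, all estimates being uniform in $t\in[0,T]$. For \eqref{kap_eps_bd}, I would subtract the two curvature evolutions: since \eqref{theta_eqn2_2} is exactly \eqref{theta_eqn} with the nonlinear term $(\overline{\mc{R}}[\overline\kappa])_s$ deleted, and the boundary conditions \eqref{BCs} and \eqref{BCs_eqn2_2} agree, the difference $w:=\overline\kappa-\overline\kappa^{\rm lin}$ is the unique $T$-periodic mild solution of $\partial_t w=-\mc{L}w+(\overline{\mc{R}}[\overline\kappa])_s$. The first point is that for the periodic solution every factor entering $\overline{\mc{R}}[\overline\kappa]$ (see \eqref{Rf_def}) is small: by \eqref{kappaH1} and the hypotheses, $\sup_t\norm{\overline\kappa}_{H^1}\le c\varepsilon$ and $\sup_t\norm{\kappa_0}_{H^1}\le c_1\varepsilon$, and then the tension bound \eqref{Tf_bound} gives $\sup_t\norm{\overline\tau}_{H^1}\le c\varepsilon^2$; combining these with the Gagliardo--Nirenberg inequality (Lemma \ref{lem:GN}), each summand of $\overline{\mc{R}}[\overline\kappa]$ is a product that is cubic in $\varepsilon$, so that rerunning the computation behind \eqref{Rfest_L2} yields $\sup_t\norm{\overline{\mc{R}}[\overline\kappa(\cdot,t)]}_{L^2}\le c\varepsilon^3$.

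Periodicity then pins down the initial value of $w$: from $w(T)=e^{-T\mc{L}}w(0)+\int_0^T e^{-(T-t')\mc{L}}(\overline{\mc{R}}[\overline\kappa])_s\,dt'$ and $w(0)=w(T)$, together with $\norm{e^{-T\mc{L}}}_{L^2\to L^2}\le e^{-T\lambda_1}<1$, one has $w(0)=(\mathrm{Id}-e^{-T\mc{L}})^{-1}\int_0^T e^{-(T-t')\mc{L}}(\overline{\mc{R}}[\overline\kappa])_s\,dt'$, whence Lemma \ref{lem:smoothing} gives $\norm{w(0)}_{L^2}\le c\varepsilon^3$. Feeding this back into the Duhamel formula and estimating in the space $\mc{Y}_T$ of \eqref{YT_def} exactly as in \eqref{Rf_Hm_est}--\eqref{Phif_est} yields $\norm{w}_{\mc{Y}_T}\le c\varepsilon^3$, and, just as \eqref{force_thm_bd} was upgraded to \eqref{kappaH1} in the proof of Theorem \ref{thm:periodic} using periodicity, this upgrades to $\sup_{0\le t\le T}\norm{w}_{H^1(I)}\le c(T)\varepsilon^3$, which is \eqref{kap_eps_bd}.

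For \eqref{tau_eps_bd}, I would compare the two elliptic equations. Multiplying \eqref{T_eqn} by $(1+\gamma)$ and using the identity $(2+\gamma)(\overline\kappa_s(\overline\kappa+\kappa_0))_s-(\overline\kappa+\kappa_0)_s\overline\kappa_s=(1+\gamma)(\overline\kappa_s(\overline\kappa+\kappa_0))_s+\overline\kappa_{ss}(\overline\kappa+\kappa_0)$, one rewrites \eqref{T_eqn} as
\[
(1+\gamma)\big(\overline\tau_{ss}+(\overline\kappa(\overline\kappa+2\kappa_0))_{ss}+(\overline\kappa_s(\overline\kappa+\kappa_0))_s\big)+\overline\kappa_{ss}(\overline\kappa+\kappa_0)=\overline\kappa(\overline\kappa+\kappa_0)^2(\overline\kappa+2\kappa_0)-(\overline\kappa+\kappa_0)^2\overline\tau\,,
\]
which is precisely \eqref{T_eqn2_2} with $\overline\kappa$ replaced by $\overline\kappa^{\rm lin}$ and right-hand side $0$. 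Subtracting and writing $\overline\kappa=\overline\kappa^{\rm lin}+w$, the difference $\eta:=\overline\tau-\overline\tau^{\rm lin}\in H^1_0(I)$ solves a Poisson-type equation whose right-hand side is the sum of the genuinely quartic terms $\overline\kappa(\overline\kappa+\kappa_0)^2(\overline\kappa+2\kappa_0)$ and $(\overline\kappa+\kappa_0)^2\overline\tau$ — each $O(\varepsilon^4)$ in $L^2$ by the size bounds above — and the difference terms $(w(\overline\kappa+\overline\kappa^{\rm lin}+2\kappa_0))_{ss}$, $(\overline\kappa^{\rm lin}_s w+w_s(\overline\kappa+\kappa_0))_s$ and $\overline\kappa^{\rm lin}_{ss}w+w_{ss}(\overline\kappa+\kappa_0)$, each carrying a factor $w$ or $w_s$. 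I would test the weak form of this equation against $\eta$ and integrate by parts exactly once in every top-order term; this is legitimate since $w=w_s=0$ at $s=0,1$ by \eqref{BCs}--\eqref{BCs_eqn2_2} and $\eta=0$ there, and it reduces every contribution to the pairing of an $O(\varepsilon^4)$ quantity (using $\norm{w}_{H^1}\le c\varepsilon^3$ from \eqref{kap_eps_bd} and $\norm{\overline\kappa}_{H^1},\norm{\overline\kappa^{\rm lin}}_{H^1},\norm{\kappa_0}_{H^1}\le c\varepsilon$) against $\eta$ or $\eta_s$. The result is $\norm{\eta_s}_{L^2}^2\le c\varepsilon^4\norm{\eta}_{H^1}$, and the Poincar\'e inequality on $H^1_0(I)$ closes this to $\sup_{0\le t\le T}\norm{\eta}_{H^1(I)}\le c\varepsilon^4$, which is \eqref{tau_eps_bd}.

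The main obstacle is organizational rather than analytic: one must regroup the right-hand side of \eqref{T_eqn} so that its top-order pieces line up term by term with those of \eqref{T_eqn2_2} — so that their difference involves only $w$, not $\overline\kappa$ itself — and one must distribute derivatives carefully when testing against $\eta$ so that no term ever requires more than the uniform $H^1$ control on $\overline\kappa,\overline\kappa^{\rm lin},w$ and the $H^1$ control on $\eta$ that are available. The extra power of $\varepsilon$ in \eqref{tau_eps_bd} relative to \eqref{kap_eps_bd} is exactly the payoff of this matching: once it is arranged, every surviving term is either genuinely quartic or is $w=O(\varepsilon^3)$ (or $w_s$) multiplied by an $O(\varepsilon)$ factor.
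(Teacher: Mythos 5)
Your argument is correct and follows essentially the same route as the paper: Duhamel plus the $O(\varepsilon^3)$ bound on $\overline{\mc{R}}[\overline\kappa]$ and periodicity to control $\overline\kappa_{\rm in}-\overline\kappa_{\rm in}^{\rm lin}$, then subtracting the regrouped tension equations, testing against $\overline\tau-\overline\tau^{\rm lin}$, and integrating by parts once on the top-order terms. The only slip is the sign of the $(\overline\kappa+\kappa_0)^2\overline\tau$ term in your rewritten form of \eqref{T_eqn} (it should appear with a $+$ on the right-hand side), which is immaterial since that term is estimated in absolute value as $O(\varepsilon^4)$ either way.
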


 The proofs of Lemmas \ref{lem:middle} and \ref{lem:lin_close} are given below.

\begin{proof}[Proof of Lemma \ref{lem:middle}]
First, using the Duhamel formula \eqref{theta_mild_f} for $\overline\kappa(s,t)$, by Lemma \ref{lem:smoothing} and estimate \eqref{Rf_Hm_est}, we may show that 
\begin{align*}
 \min\{t^{m/4},1\}\norm{\overline\kappa(\cdot,t)}_{\dot H^m(I)}\le c\norm{\overline\kappa_{\rm in}}_{L^2(I)}
 \end{align*} 
 for $0\le m\le 3$, due to the additional regularity of $\kappa_0$. By the periodicity of $\overline\kappa$, we in fact have 
 \begin{align*}
  \norm{\overline\kappa}_{\dot H^m(I)}\le c(T,m)\norm{\overline\kappa_{\rm in}}_{L^2(I)} \le c(T,m)\varepsilon \,.
  \end{align*} 
Then, using the equation \eqref{thetadot} for $\dot\theta$, we have
\begin{align*}
\sup_{0\le t\le T}\norm{\be_{\rm t}(\cdot,t)-\be_{\rm t}(\cdot,0)}_{L^2(I)}&\le c\sup_{0\le t\le T}\|\dot\theta\|_{L^2(I)} \\
&\le c\sup_{0\le t\le T}\big(\norm{\overline\kappa_{sss}}_{L^2(I)}+\norm{\dot\kappa_0}_{L^2(I)}\big)+c\varepsilon^2\\
&\le c(T)\varepsilon\,.
\end{align*}
Furthermore, using the frame relation $(\be_{\rm t})_s = \kappa\be_{\rm n}$, we have that 
\begin{align*}
\norm{\be_{\rm t}(s,0)-\be_{\rm t}(0,0)}_{L^2(I)} \le c\norm{\kappa}_{L^2(I)} \le c\varepsilon\,.
\end{align*}
Combining the above two estimates yields Lemma \ref{lem:middle}.
\end{proof}

\begin{proof}[Proof of Lemma \ref{lem:lin_close}]
We may use Duhamel's formula to write the difference $\overline\kappa-\overline\kappa^{\rm lin}$ as
\begin{align*}
\overline\kappa-\overline\kappa^{\rm lin} = e^{-t\mc{L}}(\overline\kappa_{\rm in}-\overline\kappa_{\rm in}^{\rm lin}) + \int_0^t e^{-(t-t')\mc{L}}\big(\overline{\mc{R}}[\overline\kappa(s,t')]\big)_s\,dt',
\end{align*}
where $\overline{\mc{R}}[\overline\kappa]$ is as in \eqref{Rf_def}. Recalling that $\overline\kappa,\overline\kappa^{\rm lin}\in C([0,T];H^1(I))$ by periodicity, we may replace the space $\mc{Y}_T$ of \eqref{YT_def} by $C([0,T];H^1(I))$ in the estimate \eqref{Rf_Hm_est} on the remainder terms $\overline{\mc{R}}[\overline\kappa]$. We may use this estimate along with Lemma \ref{lem:smoothing} to obtain
\begin{equation}\label{kapkaplin1}
\sup_{0\le t\le T}\|\overline\kappa-\overline\kappa^{\rm lin}\|_{H^1(I)} 
\le \| \overline\kappa_{\rm in}-\overline\kappa_{\rm in}^{\rm lin}\|_{L^2(I)} +c\varepsilon^3 \,.
\end{equation}

Furthermore, using the $T$-periodicity of both $\overline\kappa$ and $\overline\kappa^{\rm lin}$, we have 
\begin{align*}
\|\overline\kappa_{\rm in}-\overline\kappa_{\rm in}^{\rm lin} \|_{L^2(I)} &= 
\norm{e^{-T\mc{L}}(\overline\kappa_{\rm in}-\overline\kappa_{\rm in}^{\rm lin}) + \int_0^T e^{-(T-t')\mc{L}}\big(\overline{\mc{R}}[\overline\kappa(s,t')]\big)_s\,dt'}_{L^2(I)} \\
&\le ce^{-T\lambda_1}\|\overline\kappa_{\rm in}-\overline\kappa_{\rm in}^{\rm lin}\|_{L^2(I)} + c\varepsilon^3,
\end{align*}
where we have again used Lemma \ref{lem:smoothing}. In particular, provided that $T$ is large enough that $ce^{-T\lambda_1}\le \frac{1}{2}$, we have
\begin{equation}\label{kapkaplin2}
\|\overline\kappa_{\rm in}-\overline\kappa_{\rm in}^{\rm lin} \|_{L^2}\le c\varepsilon^3.
\end{equation}
Combining \eqref{kapkaplin1} and \eqref{kapkaplin2}, we obtain the desired estimate for $\overline\kappa-\overline\kappa^{\rm lin}$. \\

For the tension estimate \eqref{tau_eps_bd}, letting $\wt\kappa=\overline\kappa-\overline\kappa^{\rm lin}$, the difference $\wt\tau:=\overline\tau-\overline\tau^{\rm lin}$ satisfies 
\begin{align*}
(1+\gamma)\bigg(\wt\tau_{ss}&+\big(\wt\kappa(\overline\kappa+2\kappa_0)\big)_{ss}+ \big(\overline\kappa^{\rm lin}\wt\kappa\big)_{ss}
+\big(\wt\kappa_s(\overline\kappa+\kappa_0)\big)_s
+\big(\overline\kappa^{\rm lin}_s\wt\kappa\big)_s\bigg)\\
 &= -\wt\kappa_{ss}(\overline\kappa+\kappa_0) -\overline\kappa^{\rm lin}_{ss}\wt\kappa + (\overline\kappa+\kappa_0)^2\overline\tau +\overline\kappa(\overline\kappa+\kappa_0)^2(\overline\kappa+2\kappa_0) \,.
\end{align*}

Multiplying by $\wt\tau$ and integrating by parts, we obtain 
\begin{align*}
\norm{\wt\tau_s}_{L^2}^2&\le c\bigg(\norm{\big(\wt\kappa(\overline\kappa+2\kappa_0)\big)_s}_{L^2}+ \|\big(\overline\kappa^{\rm lin}\wt\kappa\big)_s\|_{L^2}
+\norm{\wt\kappa_s(\overline\kappa+\kappa_0)}_{L^2}
+\|\overline\kappa^{\rm lin}_s\wt\kappa\|_{L^2}\bigg)\norm{\wt\tau_s}_{L^2}\\
 & + c\bigg(\norm{\wt\kappa_s}_{L^2}\norm{(\overline\kappa+\kappa_0)_s}_{L^2} +\|\overline\kappa^{\rm lin}_s\|_{L^2}\norm{\wt\kappa}_{L^2} + \norm{\overline\kappa+\kappa_0}_{L^2}^2\norm{\overline\tau+ \overline\kappa(\overline\kappa+2\kappa_0)}_{L^\infty}\bigg)\norm{\wt\tau}_{L^\infty} \,.
\end{align*}

Using Young's inequality as well as Lemma \ref{lem:GN}, we have
\begin{align*}
\norm{\wt\tau}_{H^1}^2&\le \bigg(\norm{\wt\kappa}_{H^1}^2\big(\norm{\overline\kappa}_{H^1}^2+\|\overline\kappa^{\rm lin}\|_{H^1}^2+\norm{\kappa_0}_{H^1}^2\big)\\
&\qquad + (\norm{\overline\kappa}_{H^1}^4+\norm{\kappa_0}_{H^1}^4)\norm{\overline\tau}_{H^1}^2+ (\norm{\overline\kappa}_{H^1}^6+\norm{\kappa_0}_{H^1}^6)\norm{\overline\kappa}_{H^1}^2 \bigg) 
\le c\varepsilon^8\,,
\end{align*}
 where we have used \eqref{Tf_bound}, \eqref{kappaH1}, and \eqref{kap_eps_bd} to obtain the final inequality.
\end{proof}

Equipped with Lemmas \ref{lem:middle} and \ref{lem:lin_close}, we may now prove Theorem \ref{thm:swimming}.

\begin{proof}[Proof of Theorem \ref{thm:swimming}]
We begin by calculating the form of the swimming velocity $\bm{V}(t)=\int_0^1\frac{\p\X}{\p t}(s,t)\,ds$ stated in equation \eqref{basepoint}.
Using the formulation \eqref{classical} and recalling that $\X_{sss}=-\kappa^2\be_{\rm t}+\kappa_s\be_{\rm n}$, $(\be_{\rm t})_s=\kappa\be_{\rm n}$, and $(\be_{\rm n})_s=-\kappa\be_{\rm t}$, we have
\begin{equation}\label{swim_calc}
\begin{aligned}
\int_0^1\frac{\p\X}{\p t}(s,t)\,ds &= -\int_0^1\big({\bf I}+\gamma\be_{\rm t}\be_{\rm t}^{\rm T}\big) \big(-\kappa^2\be_{\rm t}+\kappa_s\be_{\rm n}-\tau\be_{\rm t}-(\kappa_0)_s\be_{\rm n}\big)_s \, ds \\
&= -\gamma\int_0^1 \big(-2\kappa\kappa_s-\tau_s-\kappa\kappa_s+\kappa(\kappa_0)_s\big)\be_{\rm t} \, ds \\
&= -\gamma\int_0^1 \big(-2(\kappa-\kappa_0)\kappa_s- 2\kappa_0(\kappa-\kappa_0)_s-\overline\tau_s-\kappa(\kappa-\kappa_0)_s\big)\be_{\rm t} \, ds \\ 
&= -\gamma\int_0^1 \bigg(\big(2(\kappa-\kappa_0)_s(\kappa-\kappa_0)-\overline\tau_s-\kappa(\kappa-\kappa_0)_s\big)\be_{\rm t} +2(\kappa-\kappa_0)\kappa^2\be_{\rm n}\bigg) \, ds \\ 
&= -\gamma\int_0^1 \bigg(\big((\kappa-\kappa_0)_s(\kappa-\kappa_0)-\overline\tau_s-\kappa_0(\kappa-\kappa_0)_s\big)\be_{\rm t} +2(\kappa-\kappa_0)\kappa^2\be_{\rm n}\bigg) \, ds  \,. 
\end{aligned}
\end{equation}
Here in the second line we have used the boundary conditions in \eqref{classical}, in third line we have replaced $\tau$ with $\overline\tau=\tau+\kappa_0^2$, in the fourth line we have integrated $-2(\kappa-\kappa_0)\kappa_s\be_{\rm t}$ by parts, and in the final line we have added and subtracted $\kappa_0(\kappa-\kappa_0)_s\be_{\rm t}$. \\

Then we have that the swimming velocity $\bm{V}(t)$ satisfies
\begin{align*}
\bm{V}(t) &= -\gamma\int_0^1 \bigg(\big(\overline\kappa\overline\kappa_s-\overline\tau_s-\kappa_0\overline\kappa_s\big)\be_{\rm t}(s,t) +2\overline\kappa\kappa^2\be_{\rm n}(s,t)\bigg) \, ds \\
&= -\gamma\int_0^1 \big(\overline\kappa\overline\kappa_s-\overline\tau_s-\kappa_0\overline\kappa_s\big)\be_{\rm t}(0,0) \, ds + \bm{r}_{\rm v}(t)\\
&= \gamma\int_0^1 \kappa_0\overline\kappa_s \, ds \, \be_{\rm t}(0,0) + \bm{r}_{\rm v}(t)
\end{align*}
where, using Lemma \ref{lem:middle}, we have
\begin{align*}
\abs{\bm{r}_{\rm v}(t)} &= \abs{\gamma\int_0^1 \bigg(\big(\overline\kappa\overline\kappa_s-\overline\tau_s-\kappa_0\overline\kappa_s\big)(\be_{\rm t}(s,t)-\be_{\rm t}(0,0)) +2\overline\kappa\kappa^2\be_{\rm n}(s,t)\bigg) \, ds}\\
&\le c\,\gamma \bigg(\big(\norm{\overline\kappa}_{H^1}^2+\norm{\overline\tau}_{H^1}+\norm{\kappa_0}_{H^1}\norm{\overline\kappa}_{H^1}\big)\norm{\be_{\rm t}(\cdot,t)-\be_{\rm t}(0,0)}_{L^2} +2\norm{\overline\kappa}_{H^1}\norm{\kappa}_{H^1}^2\bigg) \le c\varepsilon^3 \,.
\end{align*}
Integration by parts yields the swimming expression \eqref{swim_speed1}. \\

We now consider the equation \eqref{theta_eqn2_2} satisfied by $\overline\kappa^{\rm lin}$. We aim to use $\overline\kappa^{\rm lin}$, along with Lemma \ref{lem:lin_close}, to better understand when expression \eqref{swim_speed1} leads to nonnegligible net motion over one period in time.

Defining $\omega=\frac{2\pi}{T}$, we may first expand $\overline\kappa^{\rm lin}(s,t)$ and $\kappa_0(s,t)$ as a Fourier series in time:
\begin{equation}\label{expansion}
\kappa_0=\sum_{m=1}^\infty A_m(s)\cos(\omega m \,t)-B_m(s)\sin(\omega m \,t)\,, \quad 
\overline\kappa^{\rm lin}=\sum_{m=1}^\infty C_m(s)\cos(\omega m \,t)-D_m(s)\sin(\omega m \,t)\,.
\end{equation}
 Then, using equation \eqref{theta_eqn2_2}, the coefficients $A_m$, $B_m$, $C_m$, and $D_m$ satisfy
 \begin{align*}
 C_m &= -\frac{1}{\omega m} (D_m)_{ssss} - A_m\,,\qquad 
 D_m = \frac{1}{\omega m} (C_m)_{ssss}-B_m\,.
 \end{align*}

Further expanding $A_m$, $B_m$, $C_m$, and $D_m$ in eigenfunctions of $\mc{L}$ \eqref{L_efuns}, i.e.
\begin{align*}
A_m &= \sum_{k=1}^\infty a_{m,k} \psi_k(s)\,, \; 
B_m = \sum_{k=1}^\infty b_{m,k} \psi_k(s)\,, \; 
C_m = \sum_{k=1}^\infty c_{m,k} \psi_k(s)\,, \; 
D_m = \sum_{k=1}^\infty d_{m,k} \psi_k(s)\,,
\end{align*}
we may solve for the coefficients $c_{m,k}$, $d_{m,k}$ in terms of $a_{m,k}$, $b_{m,k}$ as 
\begin{equation}\label{coeffsCD}
c_{m,k} = \frac{\omega^2m^2}{\omega^2m^2+\lambda_k^2}\bigg(\frac{\lambda_k}{\omega m}b_{m,k}-a_{m,k}\bigg)\,, \quad
d_{m,k} = \frac{\omega^2m^2}{\omega^2m^2+\lambda_k^2}\bigg(-\frac{\lambda_k}{\omega m}a_{m,k}-b_{m,k}\bigg)\,.
\end{equation}
Here $\lambda_k=\xi_k^4$ are the eigenvalues of $\mc{L}$, given by \eqref{eigenvals}. Inserting the above expansions into the swimming expression \eqref{swim_speed1} (and replacing $\overline\kappa$ with $\overline\kappa^{\rm lin}$, by Lemma \ref{lem:lin_close}), we may write 
\begin{align*}
-\gamma&\int_0^1 \langle(\kappa_0)_s \overline\kappa^{\rm lin}\rangle \, ds  = 
-\gamma\sum_{m,k,\ell=1}^\infty\bigg(\frac{1}{2}a_{m,\ell}c_{m,k}+\frac{1}{2}b_{m,\ell}d_{m,k} \bigg)\int_0^1\psi_k(\psi_\ell)_s\,ds \\
&= \frac{\gamma}{2}\sum_{m,k,\ell=1}^\infty\frac{\omega^2 m^2}{\omega^2m^2+\lambda_k^2} \bigg(\frac{\lambda_k}{\omega m}\big(a_{m,k}b_{m,\ell}-b_{m,k}a_{m,\ell} \big) + a_{m,k}a_{m,\ell}+b_{m,k}b_{m,\ell} \bigg) \int_0^1\psi_k(\psi_\ell)_s\,ds \, .
\end{align*}
\end{proof}

\section{Observations and numerics}\label{sec:obs_num}
In this section, we begin by exploring the observations \ref{obs:obs} about the swimming speed predicted by the expression \eqref{swim_speed2}. We proceed to perform a small numerical optimization of the swimming speed subject to a fixed amount of work performed by the filament. Finally, we outline a numerical method for a reformulation of equation \eqref{classical} and verify the observations numerically.

\subsection{Observations}\label{sec:obs}
We first consider the form of the eigenfunctions $\psi_k$ of $\mc{L}$ (see \eqref{L_efuns}). Since $\psi_{2k}(s)$ is odd and $\psi_{2k-1}(s)$ is even about $s=\frac{1}{2}$ for each $k=1,2,3,\dots$, we have that eigenfunctions with the same parity do not contribute to net motion: 
\begin{equation}\label{vanish_int}
\int_0^1\psi_{2k}(\psi_{2\ell})_s\,ds =0\, ,\quad  \int_0^1\psi_{2k-1}(\psi_{2\ell-1})_s\,ds=0\,, \qquad k,\ell= 1,2,3,\dots\,.
\end{equation}

If $\kappa_0(s,t)$ is always odd about $s=\frac{1}{2}$, we may write 
\begin{align*}
\kappa_0=\sum_{m=1}^\infty \sum_{k=1}^\infty \big(a_{m,2k}\cos(\omega m \,t)+b_{m,2k}\sin(\omega m \,t)\big)\psi_{2k}(s)\,.
\end{align*}
Likewise, if $\kappa_0(s,t)$ is always even about $s=\frac{1}{2}$, we may write 
\begin{align*}
\kappa_0=\sum_{m=1}^\infty \sum_{k=1}^\infty \big(a_{m,2k-1}\cos(\omega m \,t)+b_{m,2k-1}\sin(\omega m \,t)\big)\psi_{2k-1}(s)\,.
\end{align*}
 In either of the above cases, due to \eqref{vanish_int}, we have that the leading order term of the swimming expression \eqref{swim_speed2} satisfies
 \begin{align*}
-\gamma\int_0^1 \langle(\kappa_0)_s \overline\kappa^{\rm lin}\rangle \, ds =0\,.
\end{align*}

Furthermore, writing
\begin{align*}
\kappa_0=\sum_{m=1}^\infty A_m(s)\cos(\omega m \,t)+B_m(s)\sin(\omega m \,t)\,,
\end{align*}
we see that if $A_m=\pm B_m$ or either $A_m=0$ or $B_m=0$ for all $m$, then the first term $b_{m,k}a_{m,\ell}-a_{m,k}b_{m,\ell}$ in the swimming expression \eqref{swim_speed2} vanishes. Recalling that the eigenvalues $\lambda_k=\xi_k^4$ of $\mc{L}$ satisfy $\xi_k\to\frac{(2k+1)\pi}{2}$ as $k\to\infty$ and that the smallest eigenvalue $\lambda_1\approx(4.73)^4\approx 500$ (see \eqref{L_efuns}), we note that this initial term has the greatest relative contribution to filament propulsion. (The remaining terms $a_{m,k}a_{m,\ell}+b_{m,k}b_{m,\ell}$ become relatively more important for large $m$, but the optimimzation results below prompt us to consider only $m$ small). 
In particular, if this initial term vanishes, the fiber may still have nonzero net motion over the course of one period, but we expect its displacement to be very small.\\


We may also consider optimizing the swimming speed \eqref{swim_speed2} given certain constraints on the filament deformation, such as a fixed amount of work \cite{spagnolie2010optimal,lauga2020fluid}. We begin by calculating the average work done by the swimmer over one period. Starting with the formulation \eqref{classical}, we have
\begin{align*}
W &= \int_0^1 \bigg\langle\frac{\p\X}{\p t}\cdot\big(\X_{sss}-\tau\X_s-(\kappa_0)_s\be_{\rm n} \big)_s\bigg\rangle \, ds = -\int_0^1 \bigg\langle\frac{\p\X_s}{\p t}\cdot\big(\X_{sss}-(\kappa_0)_s\be_{\rm n} \big) \bigg\rangle\, ds \\
&=-\int_0^1 \langle \dot\theta (\kappa-\kappa_0)_s \rangle \, ds = \int_0^1 \langle \dot\kappa \overline\kappa \rangle \, ds = \int_0^1 \langle\dot\kappa_0 \overline\kappa \rangle \, ds \,.
\end{align*}
Here we have used that $\overline\kappa=\kappa-\kappa_0$ is $T$-periodic. Using Lemma \ref{lem:lin_close}, we have that
\begin{align*}
|W - W^{\rm lin}| \le c\varepsilon^3\, , 
\end{align*}
where, using the expansion \eqref{expansion} and the form of the coefficients \eqref{coeffsCD}, we may write 
\begin{equation}\label{work}
\begin{aligned}
W^{\rm lin} &= \int_0^1\langle \dot\kappa_0\overline\kappa^{\rm lin} \rangle\, ds\,dt 
= \sum_{m,k,\ell=1}^\infty \frac{\omega m}{2}\bigg(b_{m,\ell}c_{m,k}-a_{m,\ell}d_{m,k}\bigg)\int_0^1\psi_k\psi_\ell \,ds \\
&= \sum_{m,k=1}^\infty \frac{\lambda_k}{2}\frac{\omega^2m^2}{\omega^2m^2+\lambda_k^2}\big(a_{m,k}^2+b_{m,k}^2\big) \,.
\end{aligned}
\end{equation}
Here we seek to numerically optimize the leading order swimming speed \eqref{swim_speed2} over a (small) finite number of spatial and temporal modes $a_{m,k}$, $b_{m,k}$, subject to a fixed amount of work \eqref{work}. For a given $m_{\max}$, $k_{\max}$, we aim to construct the preferred curvature 
\begin{align*}
\kappa_0(s) = \sum_{m=1}^{m_{\max}}\sum_{k=1}^{k_{\max}} \big(a_{m,k}\cos(\omega m\, t)-b_{m,k}\sin(\omega m\, t)\big)\psi_k(s) 
\end{align*}
which results in the fastest swimming speed.
Note that we do not necessarily expect the true optimal preferred curvature to be representable by a finite number of spatial modes $k$, as we cannot construct functions with nonhomogeneous boundary conditions using finitely many eigenfunctions $\psi_k$ of $\mc{L}$. We instead aim to visualize general trends in the emerging optimal $\kappa_0$.

We define the finite-mode swimming speed and work
\begin{align}
U_{m_{\max},k_{\max}} &:= \frac{\gamma}{2}\sum_{m=1}^{m_{\max}}\sum_{k,\ell=1}^{k_{\max}} \frac{\omega^2 m^2}{\omega^2m^2+\lambda_k^2} \bigg(\frac{\lambda_k}{\omega m}\big(a_{m,k}b_{m,\ell}-b_{m,k}a_{m,\ell} \big) \nonumber \\
&\hspace{5cm} + a_{m,k}a_{m,\ell}+b_{m,k}b_{m,\ell} \bigg) \int_0^1\psi_k(\psi_\ell)_s\,ds  \\
W_{m_{\max},k_{\max}} &:= \sum_{m=1}^{m_{\max}}\sum_{k=1}^{k_{\max}} \frac{\lambda_k}{2}\frac{\omega^2m^2}{\omega^2m^2+\lambda_k^2}\big(a_{m,k}^2+b_{m,k}^2\big)
\end{align}
and solve the following constrained optimization problem for the coefficients $a_{m,k}$, $b_{m,k}$:
\begin{equation}\label{optimization}
\begin{aligned}
\min_{a_{m,k},b_{m,k}} \quad &U_{m_{\max},k_{\max}} \\
\text{subject to }\quad &W_{m_{\max},k_{\max}} = {\rm const.}
\end{aligned}
\end{equation}
Note that we are seeking the fastest swimming speed in the negative direction with respect to the unit tangent vector $\be_{\rm t}(0,0)=\be_x$, i.e. leftward.
The optimization is performed using Matlab's \texttt{fmincon} starting from random initial $a_{m,k}$, $b_{m,k}$ satisfying the constraint, where $\text{const.}=1$.  
We use $m_{\max}=3$ and $k_{\max}=10$. For each $m=1,2,3$, the optimal temporal eigenmodes $A_{m}(s)=\sum_{k=1}^{k_{\max}}a_{m,k}\psi_k(s)$ and $B_{m}(s)=\sum_{k=1}^{k_{\max}}b_{m,k}\psi_k(s)$ are plotted in Figure \ref{subfig:optimize1}. As in \cite[Section B]{lauga2007floppy}, we find that forcing only the lowest temporal mode $m=1$ leads to the optimal swimming speed for a given amount of work. Notably, the optimal $A_1(s)$ is perfectly odd about $s=\frac{1}{2}$ while $B_1(s)$ is perfectly even. The phase difference between $A_1(s)$ and $B_1(s)$ is consistent with the classical optimization paper \cite{pironneau1974optimal} which finds that, to reach a given swimming speed while minimizing energy expenditure, the filament should deform as a traveling wave.  

\begin{figure}[!ht]
\centering
	\begin{subfigure}[b]{0.32\textwidth}
		\includegraphics[scale=0.14]{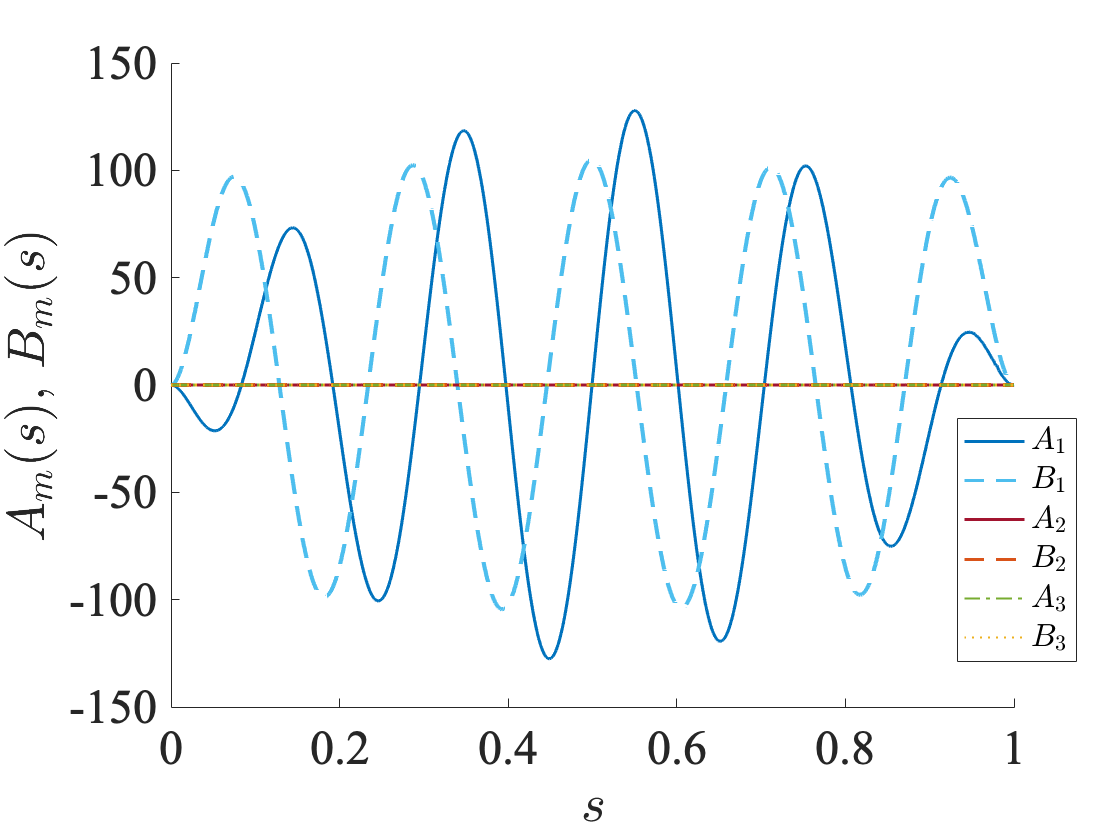}
		\caption{}
		\label{subfig:optimize1}
	\end{subfigure}
		\begin{subfigure}[b]{0.32\textwidth}
		\includegraphics[scale=0.14]{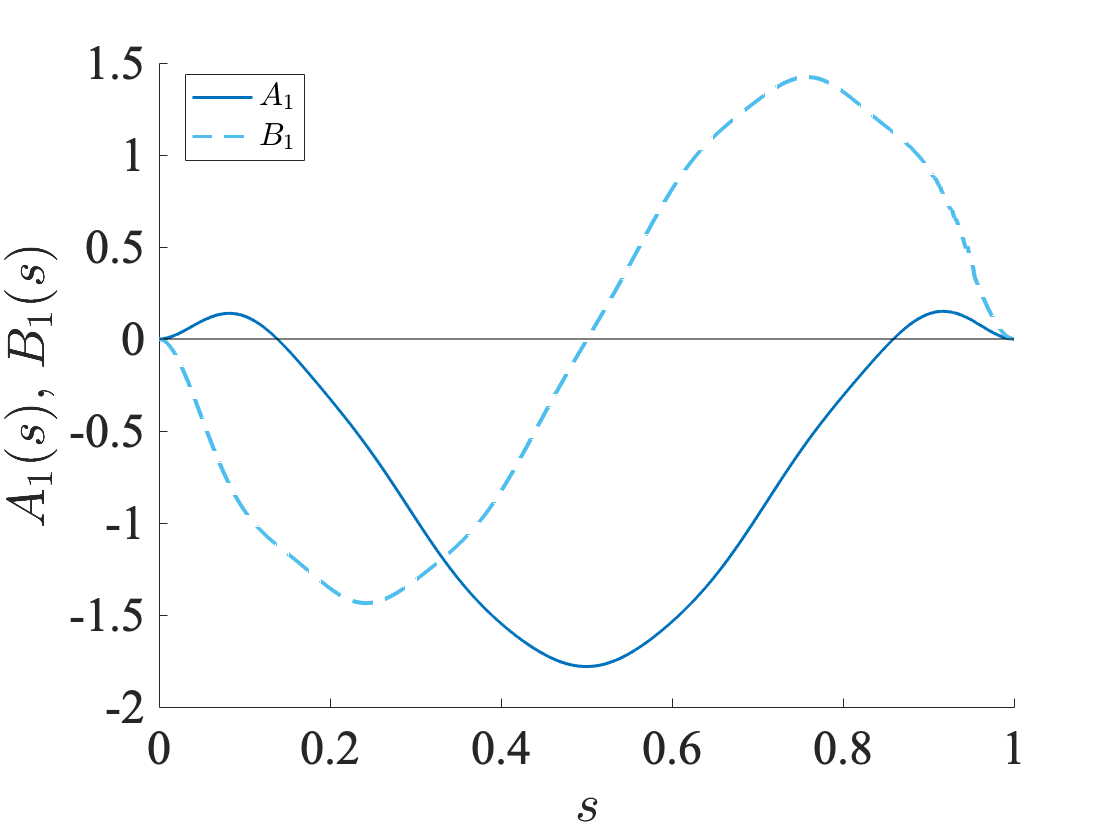}
		\caption{}
		\label{subfig:optimize2}
	\end{subfigure}
		\begin{subfigure}[b]{0.32\textwidth}
		\includegraphics[scale=0.14]{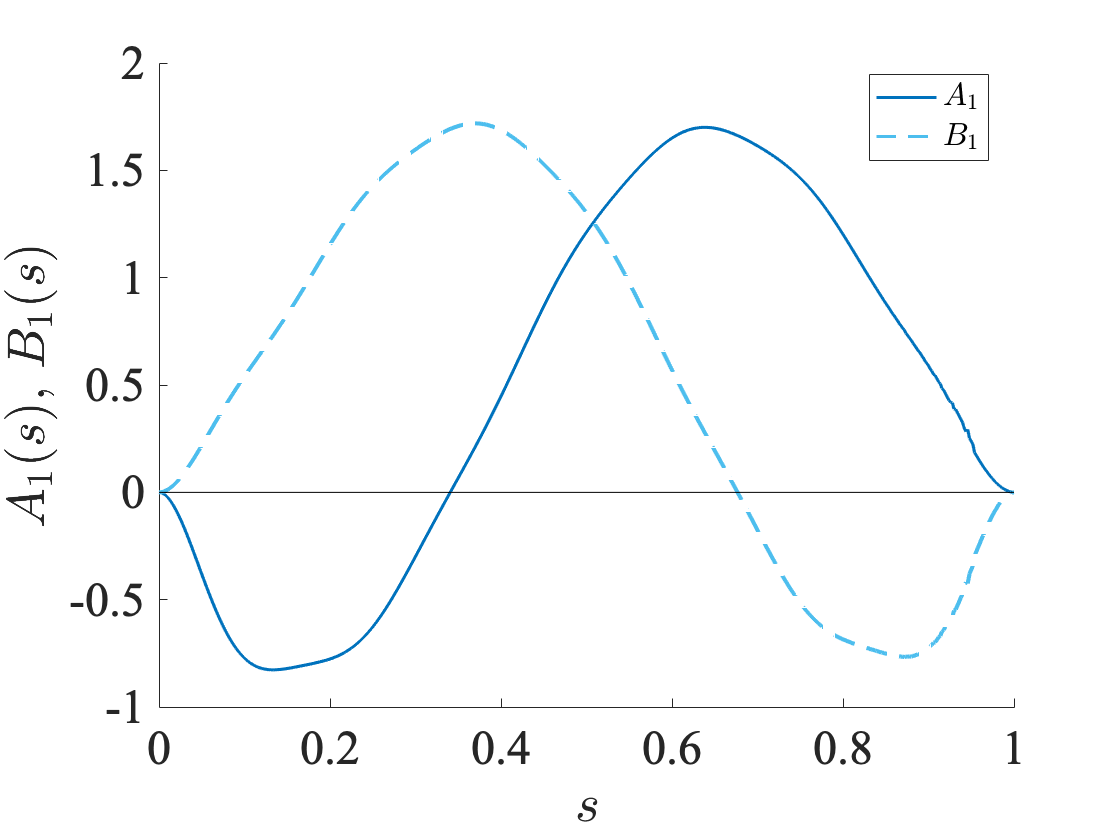}
		\caption{}
		\label{subfig:optimize3}
	\end{subfigure}
\caption{(A) For the optimization problem \eqref{optimization}, the optimal temporal eigenmodes $A_m(s)=\sum_{k=1}^{k_{\max}}a_{m,k}\psi_k(s)$ and $B_{m}(s)=\sum_{k=1}^{k_{\max}}b_{m,k}\psi_k(s)$ for $k_{\max}=10$ are plotted for each $m=1,2,3$. Forcing only the lowest temporal mode $m=1$ leads to the fastest swimming speed for a given amount of work. 
(B)--(C) When the bending energy $\norm{\kappa_0}_{L^2}$ is also constrained (see \eqref{optimization2}), the optimal eigenmodes $A_1$ and $B_1$ are far less oscillatory. Here we plot two different optima for $k_{\max}=12$ which both yield the same swimming speed of $-0.1008$.  }
\label{fig:optimize}
\end{figure}

Note that for small $m$, the expression for average work \eqref{work} is essentially the $H^{-4}$ norm of $\kappa_0$ along the length of the filament, due to the $\sim\lambda_k^{-1}$ coefficient of the summand. Thus, given a fixed amount of work, we expect the optimal $\kappa_0$ to oscillate at the highest allowable frequency in $s$, as seen in Figure \ref{subfig:optimize1}. Constraining only the work done by the filament thus may not lead to the most physically realistic optimum. As noted in \cite{lighthill1975mathematical,spagnolie2010optimal}, the bending energy of the filament can also be taken into account.

Thus we also consider the additional constraint $\norm{\kappa_0}_{L^2}=$\,const., and solve the following optimization problem for the coefficients $a_{m,k}$, $b_{m,k}$: 
\begin{equation}\label{optimization2}
\begin{aligned}
\min_{a_{m,k},b_{m,k}} \quad &U_{m_{\max},k_{\max}} \\
\text{subject to }\quad &W_{m_{\max},k_{\max}} = 1\\
 & \sum_{m,k}a_{m,k}^2=\sum_{m,k}b_{m,k}^2 = 1\,.
\end{aligned}
\end{equation}
Due to the results of optimization problem \eqref{optimization}, we take $m_{\max}=1$ and optimize only over spatial modes $k$. Two different results of the optimization for $k_{\max}=12$ are plotted in Figures \ref{subfig:optimize2} and \ref{subfig:optimize3}. Both yield the same swimming speed $U_{m_{\max},k_{\max}}= -0.1008$.

Case (B) is similar to a simple sine and cosine, i.e. perhaps the most familiar traveling wave example. However, since $A_1$ and $B_1$ are constructed of finitely many eigenfunctions $\psi_k(s)$ (see \eqref{L_efuns}), they are constrained to have homogeneous boundary conditions. We also note the striking symmetry in the solutions of case (C).   

In the next section we try swimming using both of the optimal curvatures constructed in Figures \ref{subfig:optimize2} and \ref{subfig:optimize3} in order to visualize their performance. 
We also explore other forms of $\kappa_0(s,t)$ to verify the observations about the swimming expression \eqref{swim_speed2} noted above.


\subsection{Numerical method}\label{sec:numerics}
Here we numerically verify the swimming predictions of the previous section. We develop a simple numerical method for simulating the motion of an inextensible filament via resistive force theory based on a combination of the methods of \cite{maxian2021integral} and \cite{moreau2018asymptotic}. We apply the method to the planar swimmer studied in the previous sections, but note that the formulation itself is applicable more generally.  

While the curvature evolution equations \eqref{theta_eqn}-\eqref{BCs} in terms of $\kappa$ and $\tau$ are convenient for analysis, a direct numerical implementation can be cumbersome due in part to complicated dependence on the forcing $\kappa_0$ as well as the need to solve for the actual fiber movement separately \eqref{fiber_move}. Numerically, however, we can make use of reformulations that may simplify the computation. As noted, our future goal is to do numerical analysis comparing the many existing formulations of filament inextensibility.  

To introduce the numerical method, we return to the original formulation of inextensible fiber dynamics \eqref{classical}, which we reiterate here for convenience:
\begin{equation}\label{original_eqn}
\begin{aligned}
\frac{\p\X}{\p t}(s,t) &= -\big({\bf I}+\gamma\X_s\X_s^{\rm T}\big)\big(\X_{sss}-\tau\X_s-(\kappa_0)_s\be_{\rm n}\big)_s \\
\abs{\X_s}^2&=1 \\
(\X_{ss}-\kappa_0\be_{\rm n})\big|_{s=0,1}&=0\,, \quad (\X_{sss}-\tau\X_s-(\kappa_0)_s\be_{\rm n})\big|_{s=0,1}=0\,.
\end{aligned}
\end{equation}

We enforce inextensibility directly by parameterizing $\X(s,t)$ as
\begin{equation}\label{Xparam}
\X(s,t)=\X_0(t) + \int_0^s\be_{\rm t}(s',t)ds' \, ,
\end{equation}
where $\be_{\rm t}\in S^2$; in particular, $\dot\be_{\rm t}\cdot\be_{\rm t}=0$. 

Along the fiber we may rewrite \eqref{original_eqn} as the coupled system
\begin{align}
\dot \X_0 +\int_0^s \dot\be_{\rm t}(s')\,ds' &= -({\bf I}+\gamma\be_{\rm t}\be_{\rm t}^{\rm T})\bm{h}(s) \label{form1} \\
\big({\bf I}-\be_{\rm t}(s)\be_{\rm t}(s)^{\rm T}\big)\int_0^s\bm{h}(s')ds' &= \big({\bf I}-\be_{\rm t}(s)\be_{\rm t}(s)^{\rm T}\big)\big(\X_{sss}-(\kappa_0)_s\be_{\rm n}\big)\,, \label{form2}
\end{align}
where $\bm{h}(s)$ is the hydrodynamic force density along the filament, and only the normal components of the integrated force density \eqref{form2} are specified. To satisfy the boundary conditions of \eqref{original_eqn}, we must also ensure that the total hydrodynamic force and torque along the fiber vanish: 
\begin{equation}\label{FandT}
\int_0^1\bm{h}(s) \,ds=0, \qquad \int_0^1(\X(s)-\X(0))\times\bm{h}(s) \,ds=0\,.
\end{equation}

The formulation \eqref{form1}-\eqref{form2}, combined with total force vanishing \eqref{FandT}, is equivalent to the formulation in \cite{maxian2021integral}, which uses slender body theory rather than resistive force theory in equation \eqref{form1}. As noted in \cite{maxian2021integral}, equations \eqref{form1}-\eqref{form2} along with \eqref{FandT} are enough to recover the rate of work exerted by the filament on the fluid in \eqref{original_eqn}. In particular, the rate of work may be calculated using \eqref{form1}-\eqref{form2} as 
\begin{align*}
\dot W = \int_0^1 \bm{h}(s)\cdot \bigg(\dot\X_0 +\int_0^s \dot\be_{\rm t}(s')\,ds' \bigg) ds &= -\int_0^1\bigg(\int_0^s \bm{h}(s')ds'\bigg)\cdot \dot\be_{\rm t}(s) \, ds \\
&= -\int_0^1(\X_{sss}-(\kappa_0)_s\be_{\rm n})\cdot \dot\be_{\rm t}(s) \, ds\,,
\end{align*}
where we have integrated by parts and used that $\be_{\rm t}\cdot\dot\be_{\rm t}=0$. Starting instead from \eqref{original_eqn} and again integrating by parts and using $\X_s=\be_{\rm t}\perp \dot\be_{\rm t}$, we also obtain
\begin{align*}
\dot W = \int_0^1 \big(\X_{sss}-\tau\X_s-(\kappa_0)_s\be_{\rm n}\big)_s \cdot \frac{\p\X}{\p t}\, ds = -\int_0^1 \big(\X_{sss}-(\kappa_0)_s\be_{\rm n}\big) \cdot \dot\be_{\rm t}\, ds\,.
\end{align*}

Since we are using resistive force theory, equation \eqref{form1} may be directly inverted to solve for $\bm{h}$:
\begin{equation}\label{h_solve}
\bm{h} = -({\bf I}-\frac{\gamma}{1+\gamma}\be_{\rm t}\be_{\rm t}^{\rm T})\bigg(\dot \X_0 +\int_0^s \dot\be_{\rm t}(s')\,ds'\bigg)\,,
\end{equation}
and this expression may be inserted into \eqref{form2} to condense \eqref{form1} and \eqref{form2} into a single equation along the filament. For a planar fiber, recalling that we may parameterize $\be_{\rm t}=(\cos\theta,\sin\theta)^{\rm T}$, $\be_{\rm n}=(-\sin\theta,\cos\theta)^{\rm T}$, and that $\X_{sss}=-(\theta_s)^2\be_{\rm t}+\theta_{ss}\be_{\rm n}$, we may rewrite \eqref{form1} and \eqref{form2} as
\begin{equation}\label{ourform}
\be_{\rm n}(s,t)\cdot\int_0^s({\bf I}-\frac{\gamma}{1+\gamma}\be_{\rm t}\be_{\rm t}^{\rm T})\bigg(\dot\X_0 +\int_0^{s'} \dot\be_{\rm t}(\bars)\,d\bars\bigg)ds' = -\theta_{ss}+(\kappa_0)_s\,.
\end{equation}
Equation \eqref{ourform} will be the main evolution equation for the fiber, for which the unknowns are the basepoint $\X_0$ and the tangent angle $\theta(s)$. We combine \eqref{ourform} with the total force balance \eqref{FandT} to enforce the boundary condition $(-\theta_{ss}+(\kappa_0)_s)\big|_{s=1}=0$:
\begin{equation}\label{ourform2}
\int_0^1({\bf I}-\frac{\gamma}{1+\gamma}\be_{\rm t}\be_{\rm t}^{\rm T})\bigg(\dot\X_0 +\int_0^s \dot\be_{\rm t}(s')\,ds'\bigg) \,ds =0.
\end{equation}
The torque condition in \eqref{FandT} leads to the boundary condition $(\theta_s-\kappa_0)\big|_{s=0,1}=0$, which will be enforced directly in the discretization of $\theta_{ss}$ on the right hand side of \eqref{ourform}.  \\

The fiber is discretized into $N$ segments between $N+1$ points $\X_i$ along the fiber, and $\theta_i$, $i=1,\dots,N$, is taken to be the angle between segment $i$ and the $x$-axis (see Figure \ref{fig:disc}). The equation \eqref{ourform} is enforced at the midpoint of each segment at $\X_{i-\frac{1}{2}}:=\frac{\X_{i-1}+\X_i}{2}$, $i=1,\dots,N$. 

\begin{figure}[!ht]
\centering
\includegraphics[scale=0.8]{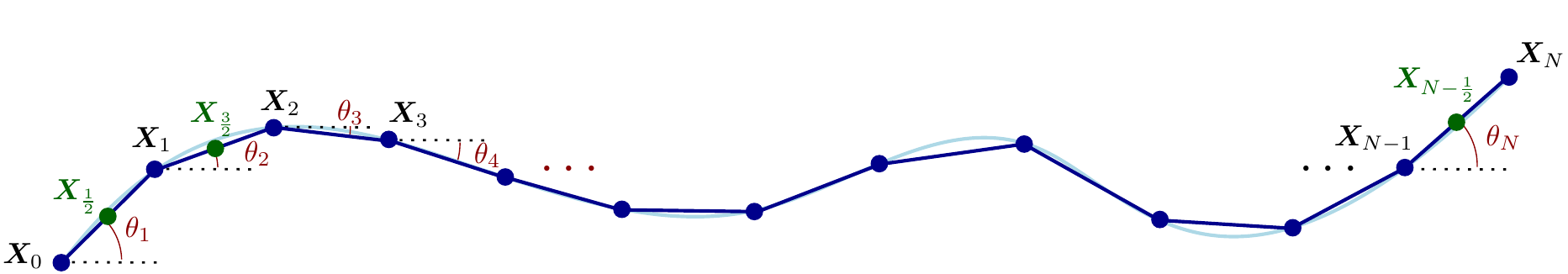}
\caption{
	Filament discretization used to implement \eqref{our_disc0}--\eqref{our_disc2}. 
}
\label{fig:disc}
\end{figure}

As mentioned, the boundary condition $(\theta_s-\kappa_0)\big|_{s=0,1}=0$ is enforced in the discretization of $\theta_{ss}$ on the right hand side of \eqref{ourform} via the approximation 
\begin{equation}\label{thetass_approx}
\theta_{ss}\big|_{s=0}\approx N^2(2\theta_2 -2\theta_1) - 2N\kappa_{0,1}\,,\qquad 
\theta_{ss}\big|_{s=1}\approx N^2(2\theta_{N-1} -2\theta_N) + 2N\kappa_{0,N}\, .
\end{equation}

At the $s=1$ endpoint, equations \eqref{ourform} and \eqref{ourform2} coincide to yield the boundary condition $\theta_{ss}\big|_{s=1}=(\kappa_0)_s\big|_{s=1}$, which, using \eqref{thetass_approx}, yields an equation for $\theta_N$: 
\begin{equation}\label{our_disc0}
\theta_N = \theta_{N-1}+ \frac{1}{N}\kappa_{0,N} - \frac{1}{2N^2}(\kappa_0)_{s,N} \,.  
\end{equation}

 The full discretized system is then given by 
\begin{align}
\frac{1}{N}\begin{pmatrix}
-\sin\theta_j \\
\cos\theta_j
\end{pmatrix}\cdot\sum_{i=1}^j\bm{M}_{\rm RFT}(\theta_i)\dot{\X}_{i-\frac{1}{2}} &= - N^2(\theta_{j-1}-2\theta_j+\theta_{j+1}) + (\kappa_0)_{s,j} \,, \quad j=2,\dots,N-1   \label{our_disc1}\\
\frac{1}{N}\begin{pmatrix}
-\sin\theta_1 \\
\cos\theta_1
\end{pmatrix}\cdot\bm{M}_{\rm RFT}(\theta_1)\dot{\X}_{\frac{1}{2}} &= -N^2(2\theta_2 -2\theta_1) + 2N\kappa_{0,1}    \label{our_disc1_1}\\
\frac{1}{N}\sum_{i=1}^{N-1}\bm{M}_{\rm RFT}(\theta_i)\dot{\X}_{i-\frac{1}{2}} &=0 \,,  \label{our_disc2}
\end{align}
along with equation \eqref{our_disc0}.
Here the evolution $\dot{\X}_{i-\frac{1}{2}}$ of each segment midpoint is parameterized as 
\begin{align*}
\dot{\X}_{i-\frac{1}{2}} = \begin{pmatrix}
\dot x_0\\
\dot y_0
\end{pmatrix} 
+ \frac{1}{2N}\begin{pmatrix}
-\sin\theta_i\\
\cos\theta_i
\end{pmatrix}\dot\theta_i 
+\frac{1}{N}\sum_{k=1}^{i} \begin{pmatrix}
-\sin\theta_k\\
\cos\theta_k
\end{pmatrix}\dot\theta_k\,, \quad i=1,\dots,N\,,
\end{align*}
and the $2N\times2N$ matrix $\bm{M}_{\rm RFT}(\theta_i)$ is given by 
\begin{align*}
\bm{M}_{\rm RFT}(\theta_i) = 
\begin{pmatrix}
1-\frac{\gamma}{1+\gamma}\cos^2\theta_i & -\frac{\gamma}{1+\gamma}\cos\theta_i\sin\theta_i \\
-\frac{\gamma}{1+\gamma}\cos\theta_i\sin\theta_i & 1-\frac{\gamma}{1+\gamma}\sin^2\theta_i
\end{pmatrix}\,.
\end{align*}
The expressions \eqref{our_disc1} and \eqref{our_disc1_1} include $N-1$ equations enforced at $\X_{\frac{1}{2}},\dots,\X_{N-\frac{3}{2}}$; the expression \eqref{our_disc2} involves two equations enforced at $\X_{N-\frac{1}{2}}$; and the equation \eqref{our_disc0} gives one additional equation to uniquely determine the $N+2$ unknowns $x_0$, $y_0$, $\theta_1$, \dots, $\theta_N$. We use a standard ODE solver to evolve \eqref{our_disc0}--\eqref{our_disc2} in time. \\

The formulation \eqref{our_disc0}--\eqref{our_disc2} is very similar to that of Moreau et al. \cite{moreau2018asymptotic}, which also avoids the need to solve for the unknown tension by projecting away from the tangential direction along the fiber. At the continuous level, the formulation in \cite{moreau2018asymptotic} is essentially based on the identity 
\begin{equation}\label{cross_ID}
\X_s\times(\X_{sss}-\tau(s)\X_s-(\kappa_0)_s\be_{\rm n}) = \big(\X_s\times(\X_{ss}-\kappa_0\be_{\rm n})\big)_s\,.
\end{equation} 

Integrating \eqref{cross_ID} with respect to arclength from $s$ to 1 ($0\le s\le 1$) and using the boundary conditions in \eqref{original_eqn}, the identity \eqref{cross_ID} yields
\begin{equation}\label{ID_result}
\begin{aligned}
\X_s(s)\times(\X_{ss}(s)-\kappa_0(s)\be_{\rm n}) &= -\int_s^1 \X_{s'}(s')\times\big(\X_{sss}(s')-\tau(s')\X_{s}(s')-\kappa_0(s')\be_{\rm n}\big)\,ds' \\
&= \int_s^1 \big(\X(s')-\X(s)\big)\times \big(\X_{sss}-\tau\X_{s}-\kappa_0\be_{\rm n}\big)_{s'} \,ds' \,.
\end{aligned}
\end{equation}

In the case of a planar filament parameterized by the tangent angle $\theta$,  we have $\X_s\times\X_{ss}=\theta_s\be_z$. Combining \eqref{ID_result} with the definition of $\bm{h}$ \eqref{h_solve}, we obtain (in the $\be_z$ direction): 
\begin{equation}\label{moreau}
\begin{aligned}
\theta_s(s) - \kappa_0(s) &= \int_s^1\big(\X(s')-\X(s)\big)\times \bm{h}(s') \,ds' \\
&= -\int_s^1\big(\X(s')-\X(s)\big)\times\bigg( ({\bf I}-\frac{\gamma}{1+\gamma}\be_{\rm t}\be_{\rm t}^{\rm T})\bigg(\dot\X_0 +\int_0^{s'} \dot\be_{\rm t}(\bars)\,d\bars\bigg) \bigg) \,ds'\,.
\end{aligned}
\end{equation}
The equation \eqref{moreau} is also enforced at each of the $N$ midpoints $\X_{i-\frac{1}{2}}$ along the discretized fiber (see Figure \ref{fig:disc}). To close the discretized system, equation \eqref{moreau} is accompanied by the total force constraint 
\begin{equation}\label{Fbalance0}
\int_0^1({\bf I}-\frac{\gamma}{1+\gamma}\be_{\rm t}\be_{\rm t}^{\rm T})\bigg(\dot\X_0 +\int_0^s \dot\be_{\rm t}(s')\,ds'\bigg) \, ds =0\,,
\end{equation}
which gives two additional equations. \\

In Appendix \ref{app:num}, we validate the method \eqref{our_disc0}--\eqref{our_disc2} against a direct discretization of the classical expression \eqref{original_eqn}. A more thorough comparison of different numerical implementations of fiber inextensibility is the intended subject of future work. 

\subsection{Swimmer numerics}\label{subsec:swim_num}
We use the numerical method \eqref{our_disc0}--\eqref{our_disc2} to test the swimming predictions from Section \ref{sec:obs}. In each simulation, the initial condition is a straight fiber lying on the $x$-axis from $x=0$ to $x=1$. We discretize the fiber into $N=100$ segments and prescribe a preferred curvature of the form 
\begin{align*}
\kappa_0(s,t) = F_1(s)\cos(\omega\,t)+F_2(s)\sin(\omega\,t)\,, \qquad \norm{F_1}_{L^2(I)}=\norm{F_2}_{L^2(I)}=1\,.
\end{align*}
Temporal forcing is only at the lowest mode, in accordance with the conclusions of the previous section. We take $\gamma=1$ and $\omega=2\pi$, and consider different waveforms $F_1(s)$ and $F_2(s)$. \\

{\bf Non-swimmers:}
We begin by considering combinations of $F_1$ and $F_2$ which are predicted to produce no net motion over each time period. In accordance with the observations of Section \ref{sec:obs}, we consider examples where $F_1$ and $F_2$ are either both even or both odd about $s=\frac{1}{2}$. Specifically, we take
\begin{enumerate}
\item  $F_1 = \cos(4\pi s)$, $F_2 = \cos(2\pi s)$ \; (both even about $s=\frac{1}{2}$) \\
\item  $F_1 = \sin(4\pi s)$, $F_2 = \sin(2\pi s)$ \; (both odd about $s=\frac{1}{2}$)
\end{enumerate}
and normalize both such that $\norm{F_1}_{L^2}=\norm{F_2}_{L^2}=1$. The two combinations of $F_1$ and $F_2$ are plotted in Figure \ref{fig:nonswimmers}, along with snapshots of the spatial positions of both swimmer (1) and swimmer (2) over the course of 50 time units. As predicted, neither swimmer exhibits net displacement in the $x$-direction. 

\begin{figure}[!ht]
\centering
	\begin{subfigure}[b]{0.32\textwidth}
		\includegraphics[scale=0.14]{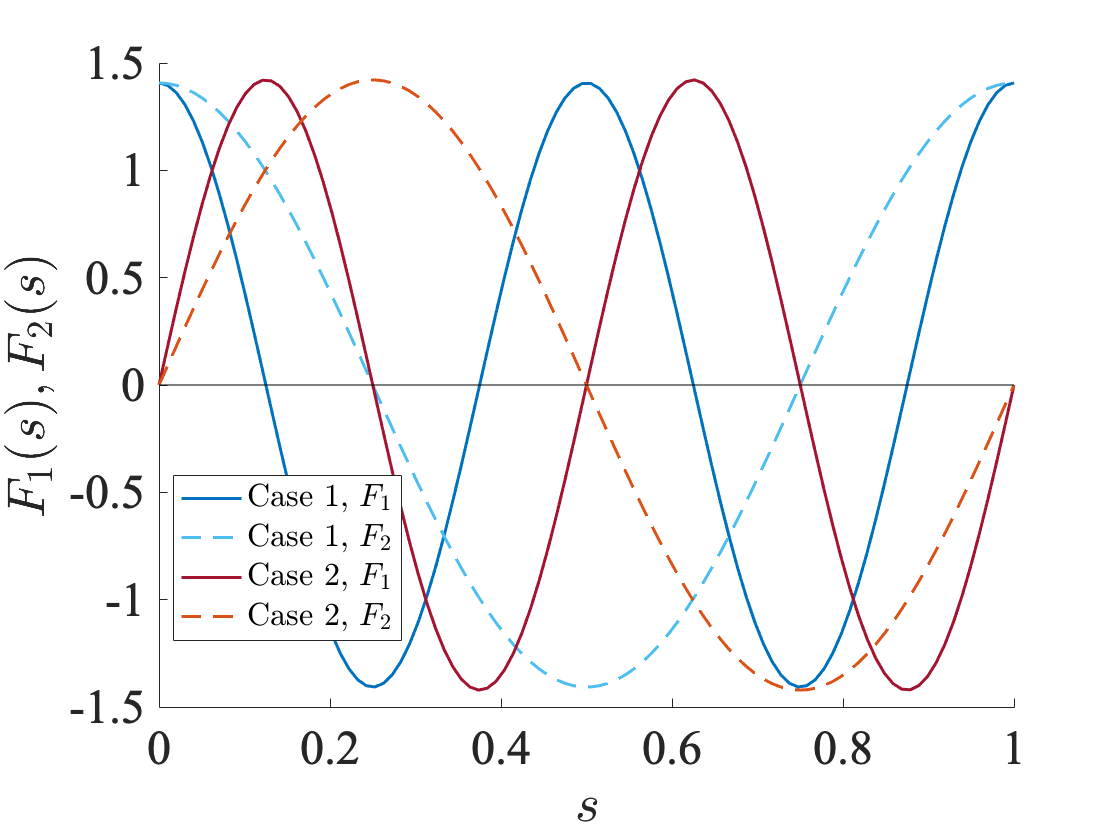}
		\caption{}
	\end{subfigure}
	\begin{subfigure}[b]{0.32\textwidth}
		\includegraphics[scale=0.14]{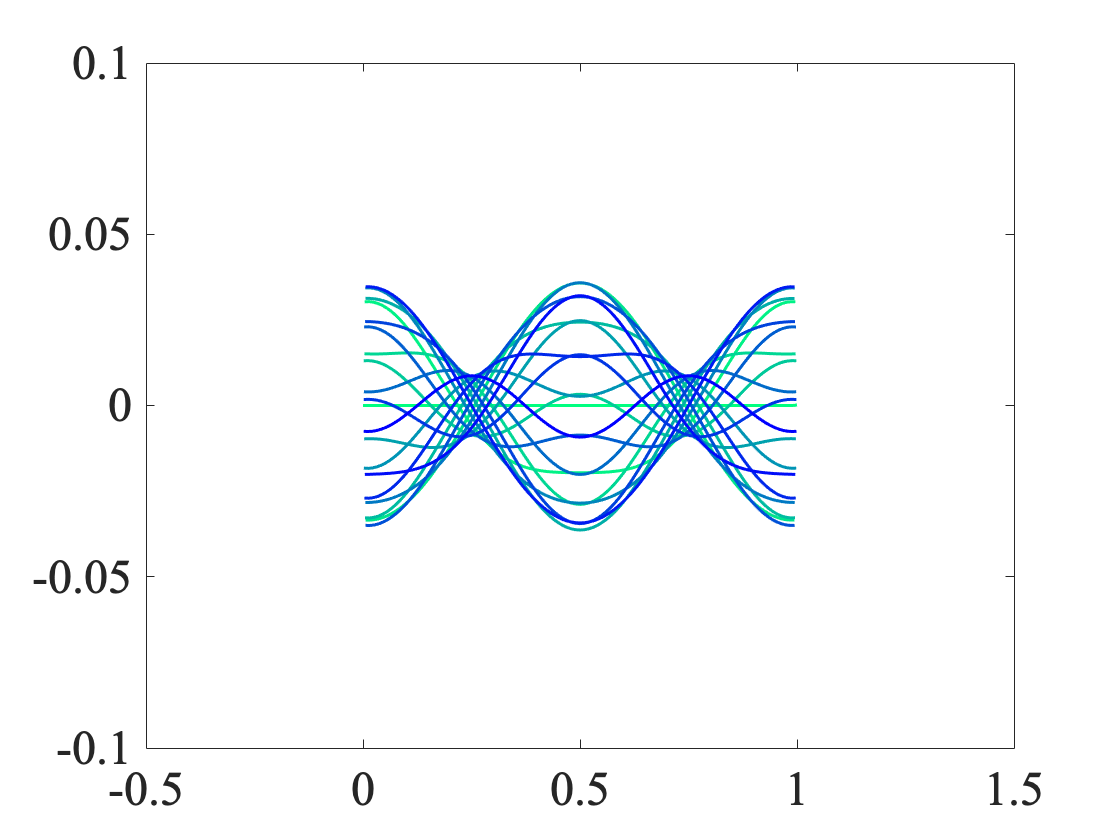}
		\caption{}
	\end{subfigure}
	\begin{subfigure}[b]{0.32\textwidth}
		\includegraphics[scale=0.14]{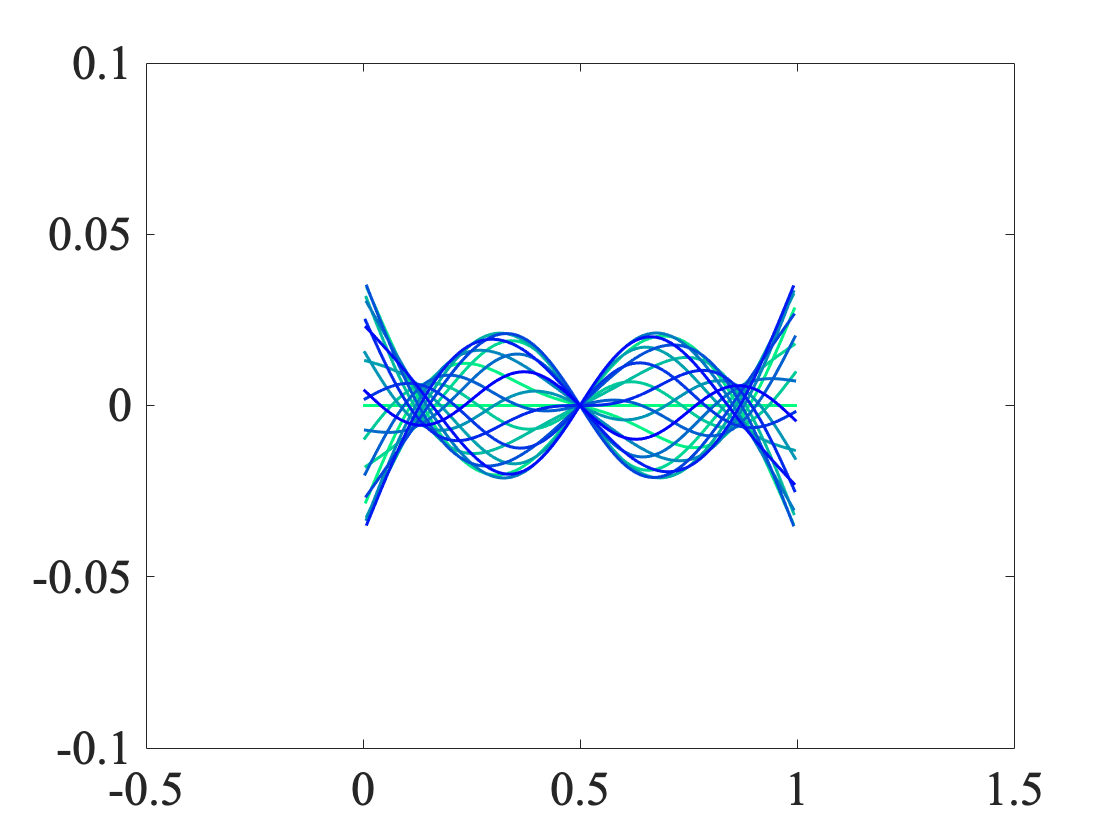}
		\caption{}
	\end{subfigure}
	\includegraphics[scale=0.2]{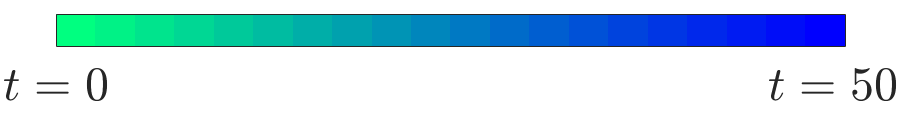}
\caption{(A) Plot of the two combinations of non-swimming $F_1$ and $F_2$ considered here. (B) Snapshots of the swimmer position in case (1) over the course of 50 time units. (C) Snapshots of the swimmer position in case (2) over the course of 50 time units. }
\label{fig:nonswimmers}
\end{figure}

{\bf Bad swimmers:} 
We next consider swimmers predicted in Section \ref{sec:obs} to possibly swim, but very poorly. We choose $F_2(s)$ to be neither even nor odd about $s=\frac{1}{2}$, and consider $F_1=\pm F_2$ and $F_1=0$:
\begin{enumerate}
\item[(3)] $F_1 = \cos(2\pi s)+ \sin(2\pi s)$, $F_2=F_1$ \\
\item[(4)] $F_1=0$, $F_2 = \cos(2\pi s)+ \sin(2\pi s)$ \\
\item[(5)] $F_1 = \cos(2\pi s)+ \sin(2\pi s)$, $F_2=-F_1$\,.
\end{enumerate}
As before, we normalize $\norm{F_2}_{L^2}=1$. The three combinations of $F_1$ and $F_2$ are plotted in Figure \ref{fig:badswimmers}, along with snapshots of their corresponding fiber positions over the course of 50 time units. Note that all three exhibit a very small rightward displacement.
\begin{figure}[!ht]
\centering
	\begin{subfigure}[b]{0.32\textwidth}
		\includegraphics[scale=0.14]{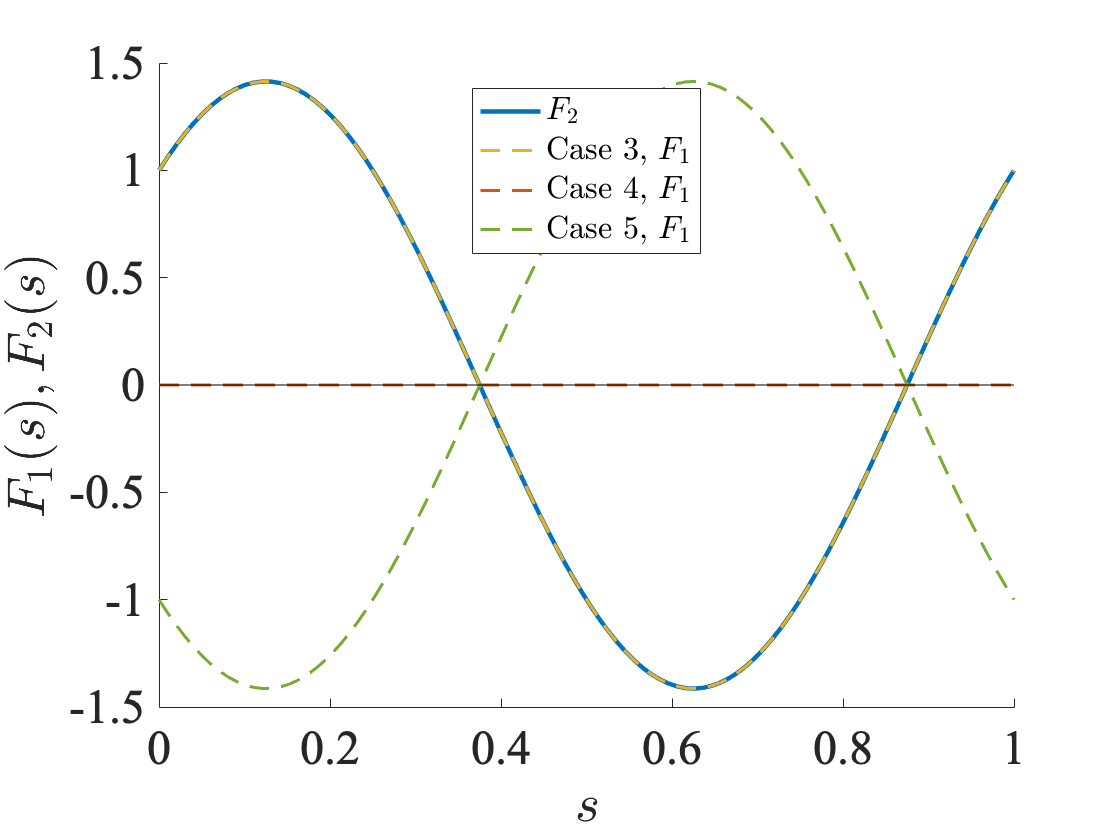}
		\caption{}
	\end{subfigure}
	\begin{subfigure}[b]{0.32\textwidth}
		\includegraphics[scale=0.14]{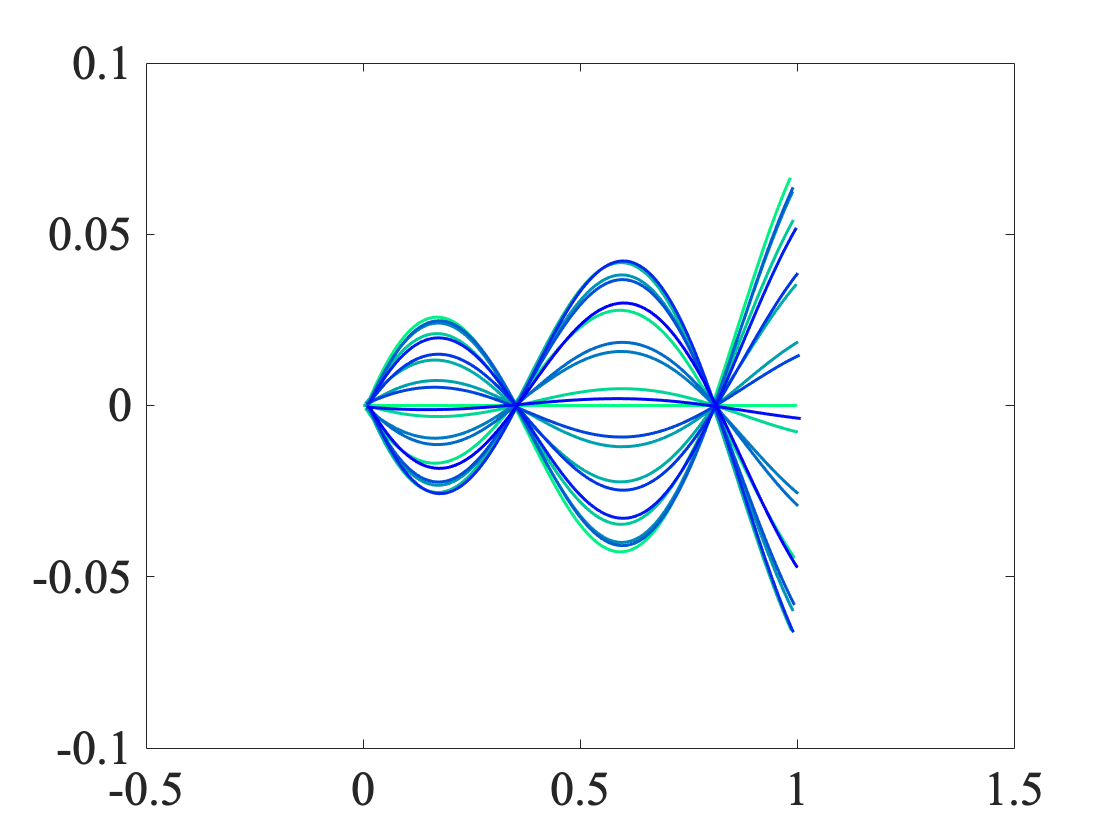}
		\caption{}
	\end{subfigure} \\
	\begin{subfigure}[b]{0.32\textwidth}
		\includegraphics[scale=0.14]{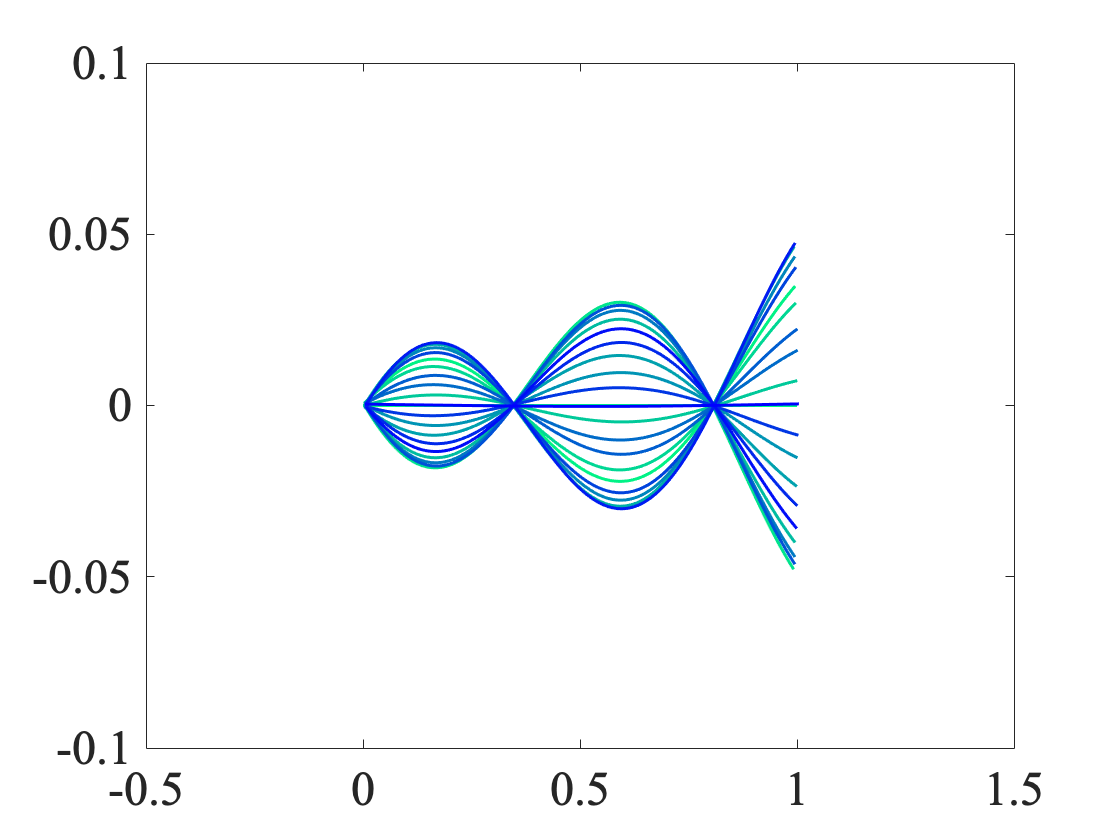}
		\caption{}
	\end{subfigure}
		\begin{subfigure}[b]{0.32\textwidth}
		\includegraphics[scale=0.14]{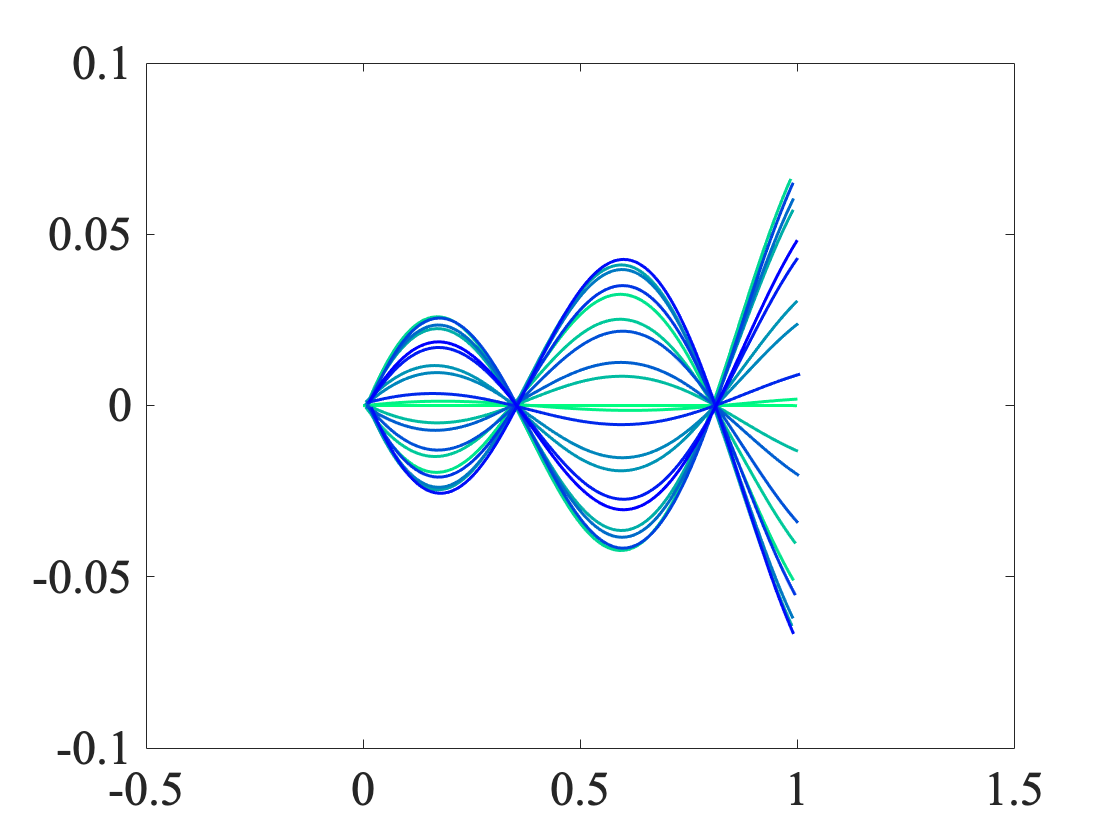}
		\caption{}
	\end{subfigure}
	\includegraphics[scale=0.2]{colorbar.png}
\caption{(A) Plot of $F_1$ and $F_2$ for the three cases of bad swimmers considered here. (B)--(D) Snapshots of the swimmer position in cases (3)--(5), respectively, over the course of 50 time units. }
\label{fig:badswimmers}
\end{figure}

{\bf Good swimmers:}
Finally, we consider four examples of swimmers which do exhibit nonnegligible net motion, in accordance with the observations of Section \ref{sec:obs}. We consider the following four combinations of $F_1$ and $F_2$:
\begin{enumerate}
\item[(6)] $F_1 = \cos(2\pi s)$, $F_2 = \sin(2\pi s)$ \\
\item[(7)] $F_1 = s^2$, $F_2 = (s-1)^2$ \\
\item[(8)] $F_1=A_1(s)$, $F_2=B_1(s)$ from Figure \ref{subfig:optimize2} \\
\item[(9)] $F_1=A_1(s)$, $F_2=B_1(s)$ from Figure \ref{subfig:optimize3} \,.
\end{enumerate}
In each case, $F_1$ and $F_2$ are normalized to have $\norm{F_1}_{L^2}=\norm{F_2}_{L^2}=1$. Snapshots of each swimmer's position over 50 time units are plotted in Figure \ref{fig:goodswimmers}. Notably, all four swimmers swim relatively well, but the two optimal swimmers from Figure \ref{fig:optimize} swim the farthest. 

\begin{figure}[!ht]
\centering
	\begin{subfigure}[b]{0.32\textwidth}
		\includegraphics[scale=0.14]{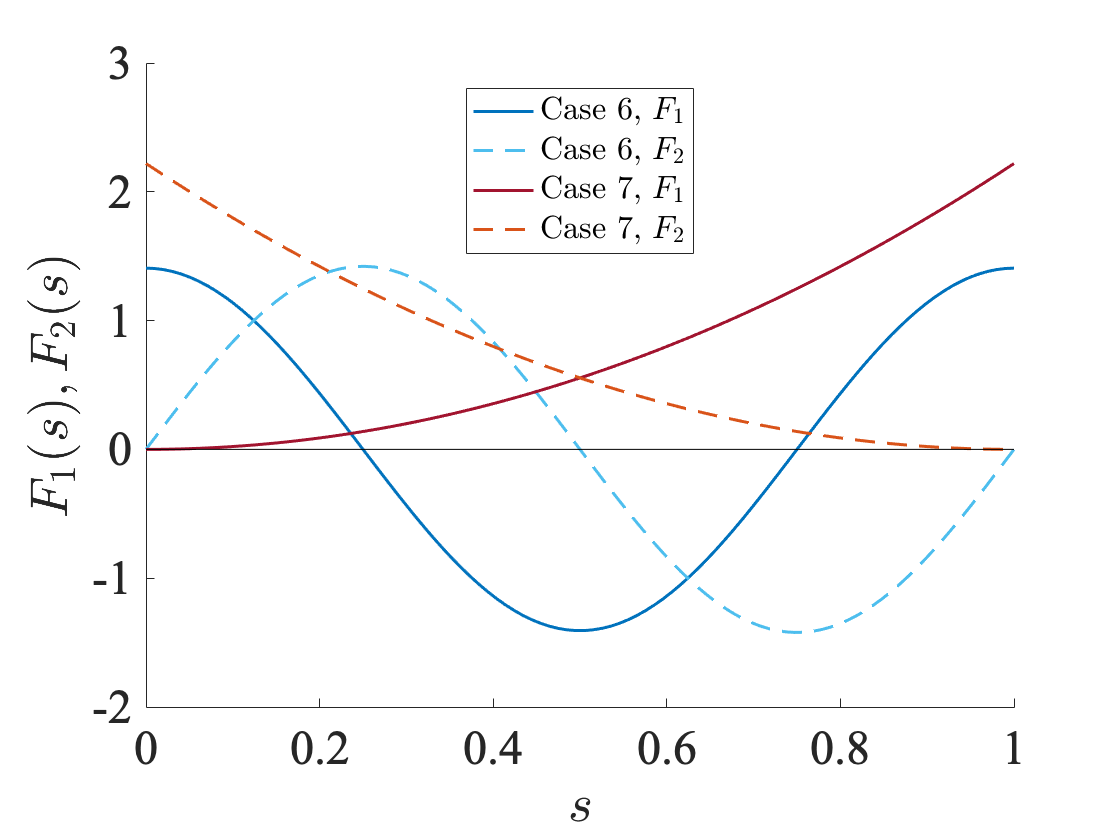}
		\caption{}
	\end{subfigure}
	\begin{subfigure}[b]{0.32\textwidth}
		\includegraphics[scale=0.14]{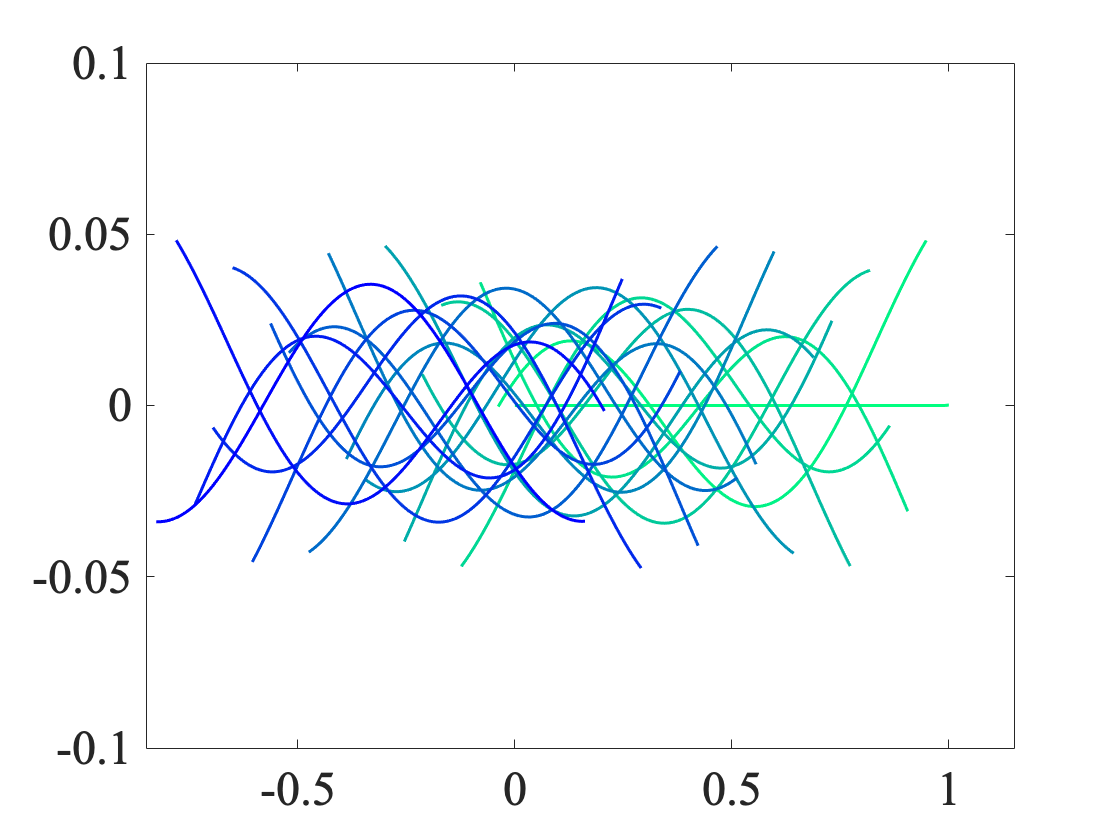}
		\caption{}
	\end{subfigure} 
	\begin{subfigure}[b]{0.32\textwidth}
		\includegraphics[scale=0.14]{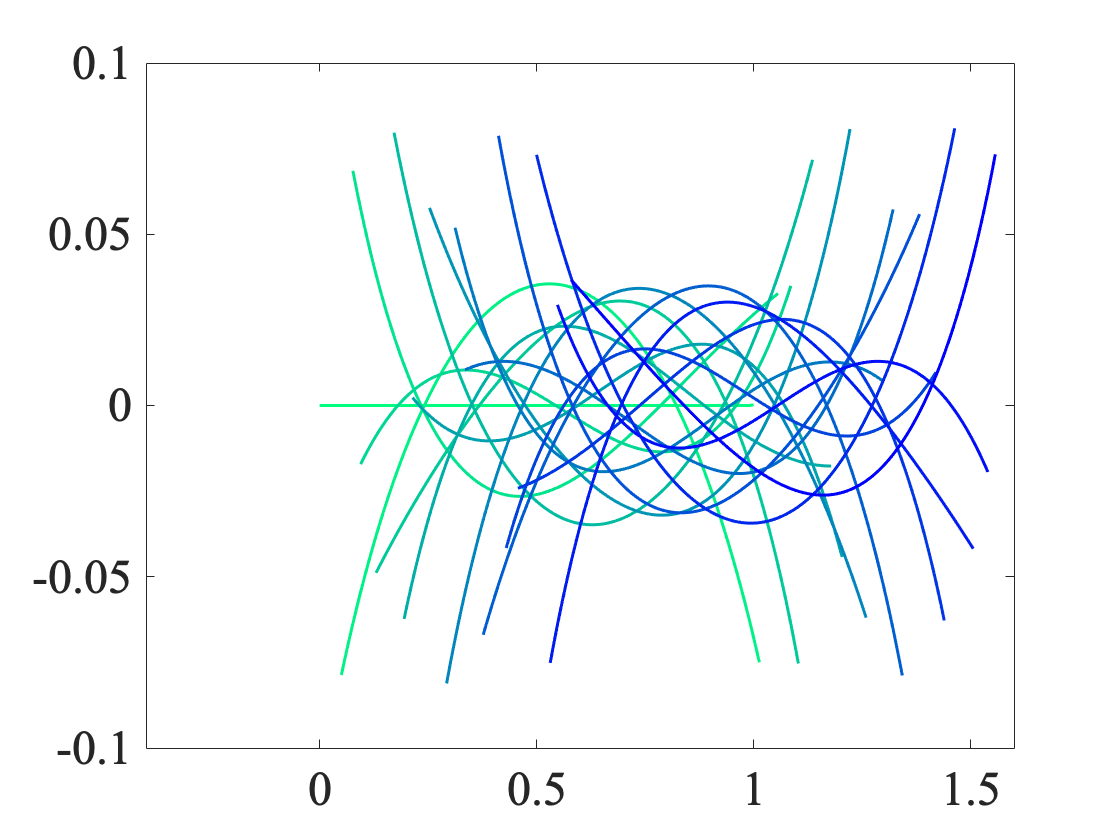}
		\caption{}
	\end{subfigure} \\
		\begin{subfigure}[b]{0.32\textwidth}
		\includegraphics[scale=0.14]{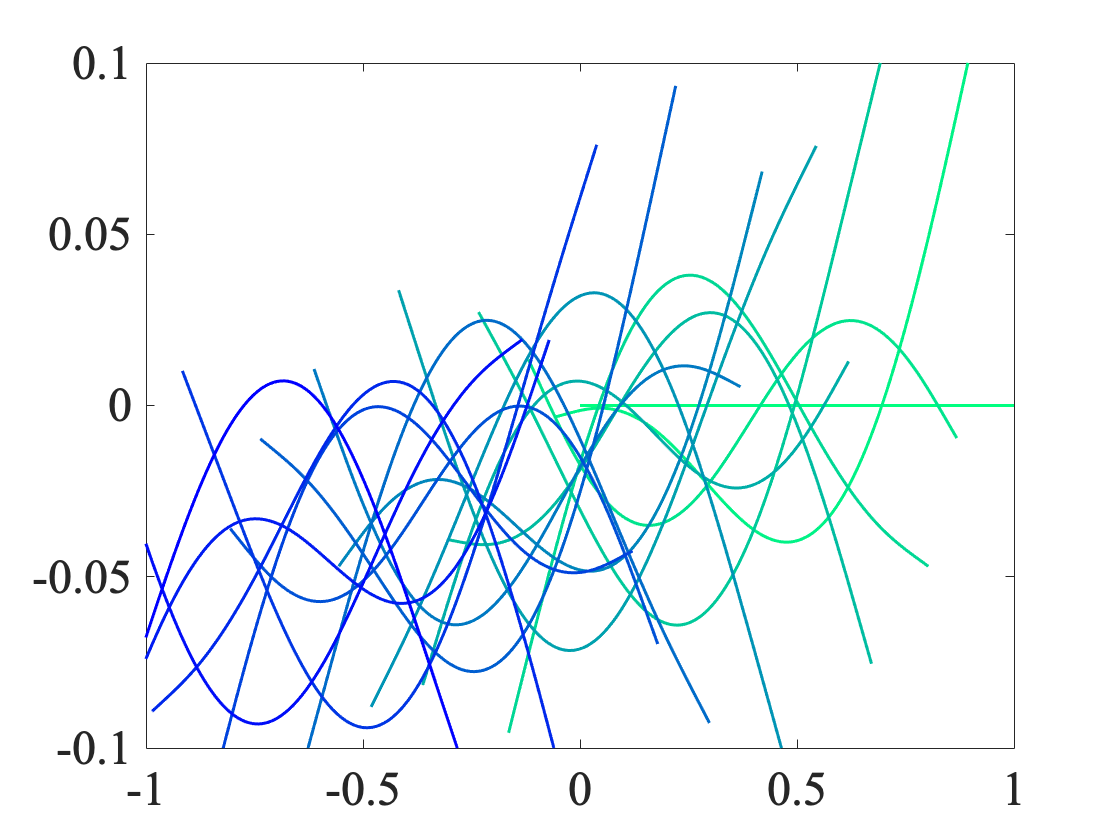}
		\caption{}
	\end{subfigure}
			\begin{subfigure}[b]{0.32\textwidth}
		\includegraphics[scale=0.14]{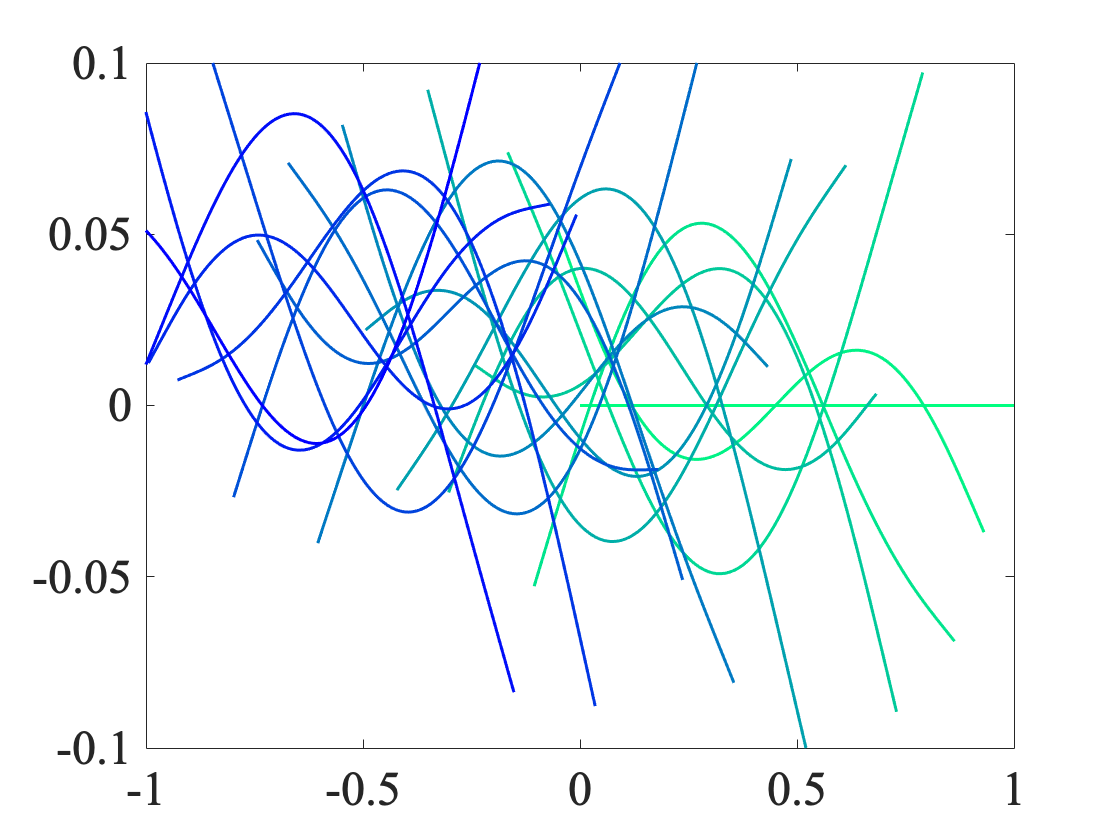}
		\caption{}
	\end{subfigure}
	\includegraphics[scale=0.2]{colorbar.png}
\caption{(A) Plot of $F_1$ and $F_2$ for cases (6) and (7) of good swimmers.
(B)--(E) Snapshots of the swimmer position in cases (6)--(9), respectively, over the course of 50 time units.}
\label{fig:goodswimmers}
\end{figure}

We may compare the swimming distance predicted by the expression \eqref{swim_speed1} to the observed displacement of the swimmer over a few periods. We numerically calculate the integral 
\begin{align*}
 \int_{t=5}^{t=10} U(t)\,dt = -\gamma \int_{t=5}^{t=10}\int_0^1 (\kappa_0)_s(\kappa-\kappa_0)\, ds \,dt
\end{align*}
over the course of 5 time units between $t=5$ and $t=10$. In Table \ref{tab:comparison}, we compare this integral expression to the actual horizontal displacement of the swimmer observed in numerical simulations, which we calculate as $x_0\big|_{t=10}-x_0\big|_{t=5}$. Note that in cases (8) and (9), the distance is far less than the $5\times(-0.1008)$ predicted by the linear theory. The prescribed $\kappa_0$ is not small, so nonlinear effects impact the swimming speeds seen here.  
\begin{table}
  \begin{center}
\def~{\hphantom{0}}
  \begin{tabular}{l|ccccc}
$F_1(s)$  &  Predicted displacement & Observed displacement \\[3pt]
\hline \\[-6pt]
Case 6 & -0.06033 & -0.06013 \\
Case 7 & 0.02643 & 0.02652 \\
Case 8 & -0.1201 & -0.1204 \\
Case 9 & -0.1226 & -0.1220 \\[3pt]
  \end{tabular}
  \caption{Predicted swimming distance over 5 time units (using expression \eqref{swim_speed1}) versus swimming distance $x_0\big|_{t=10}-x_0\big|_{t=5}$ observed in numerical simulations.  }
  \label{tab:comparison}
  \end{center}
\end{table} 

Finally, in one representative case each of non-swimmer, bad swimmer, and good swimmer, we plot the integrand $(\kappa_0)_s(\kappa-\kappa_0)$ of the swimming expression \eqref{swim_speed1} as a function of arclength $s$ over the course of one time period. Figure \ref{fig:intterm} displays snapshots of this integrand for forcing functions (1), (3), and (6), respectively. 

\begin{figure}[!ht]
\centering
	\begin{subfigure}[b]{0.32\textwidth}
		\includegraphics[scale=0.14]{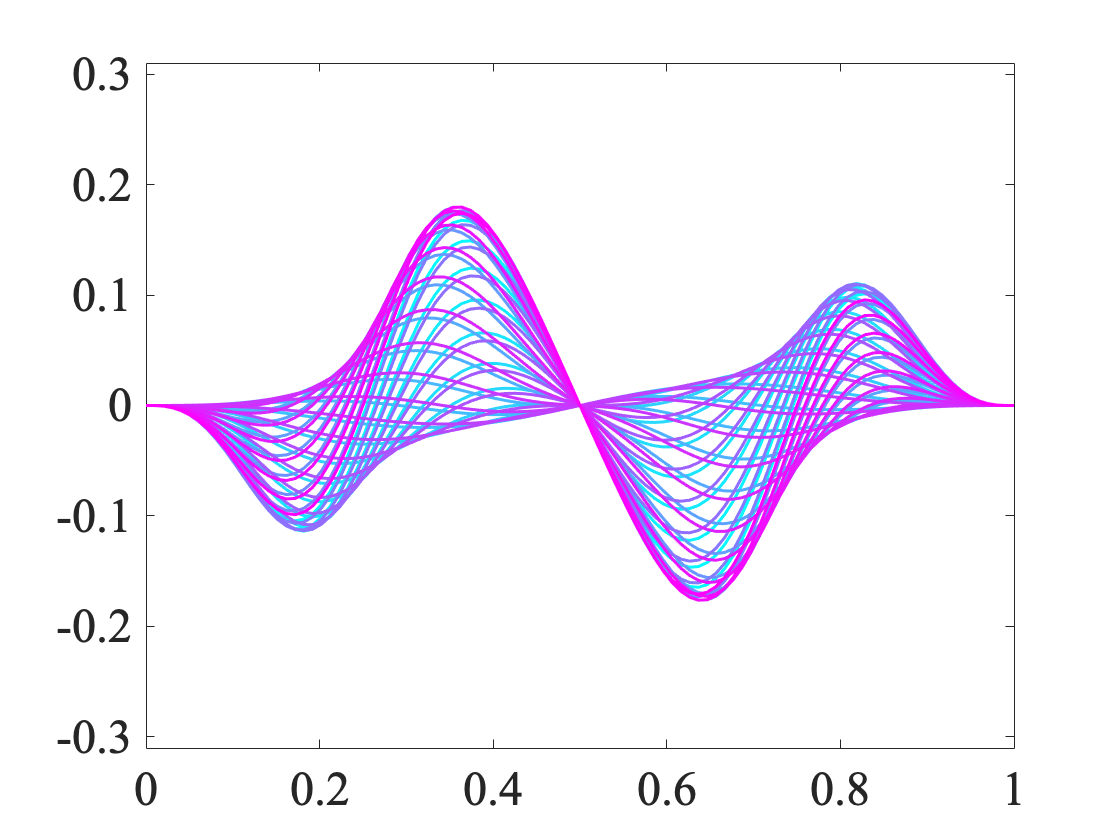}
		\caption{}
	\end{subfigure} 
	\begin{subfigure}[b]{0.32\textwidth}
		\includegraphics[scale=0.14]{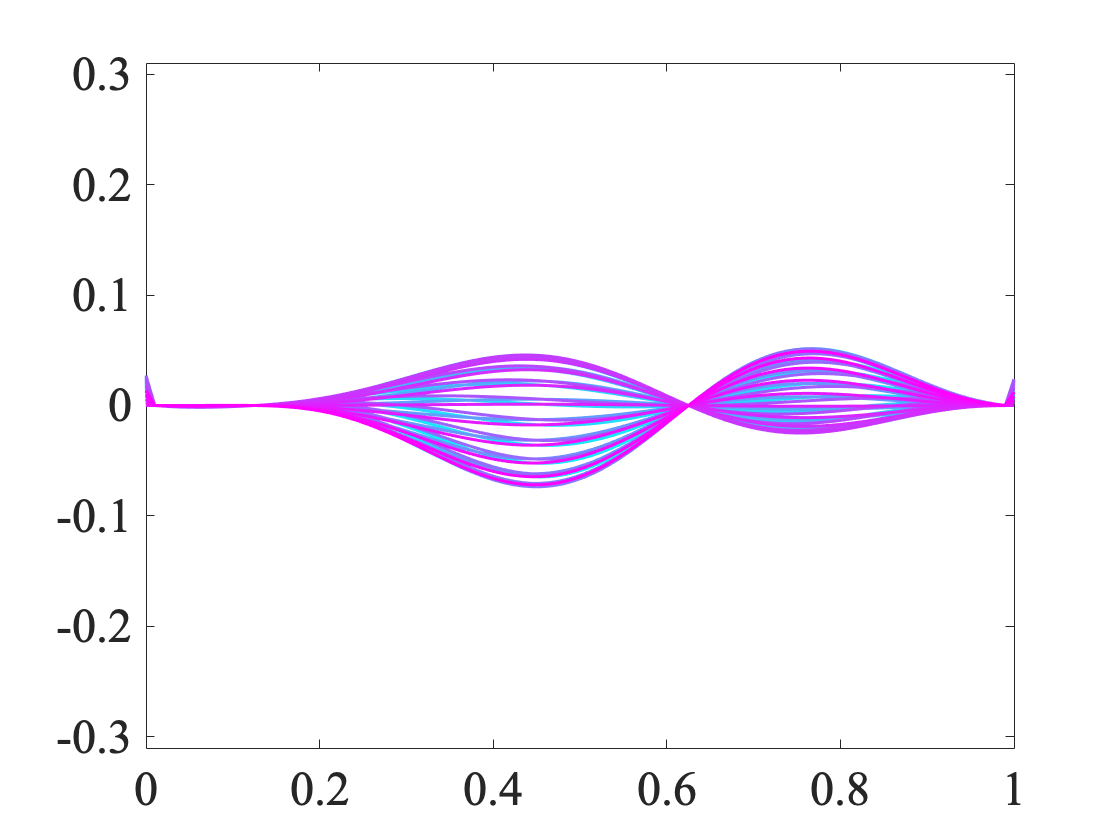}
		\caption{}
	\end{subfigure}
		\begin{subfigure}[b]{0.32\textwidth}
		\includegraphics[scale=0.14]{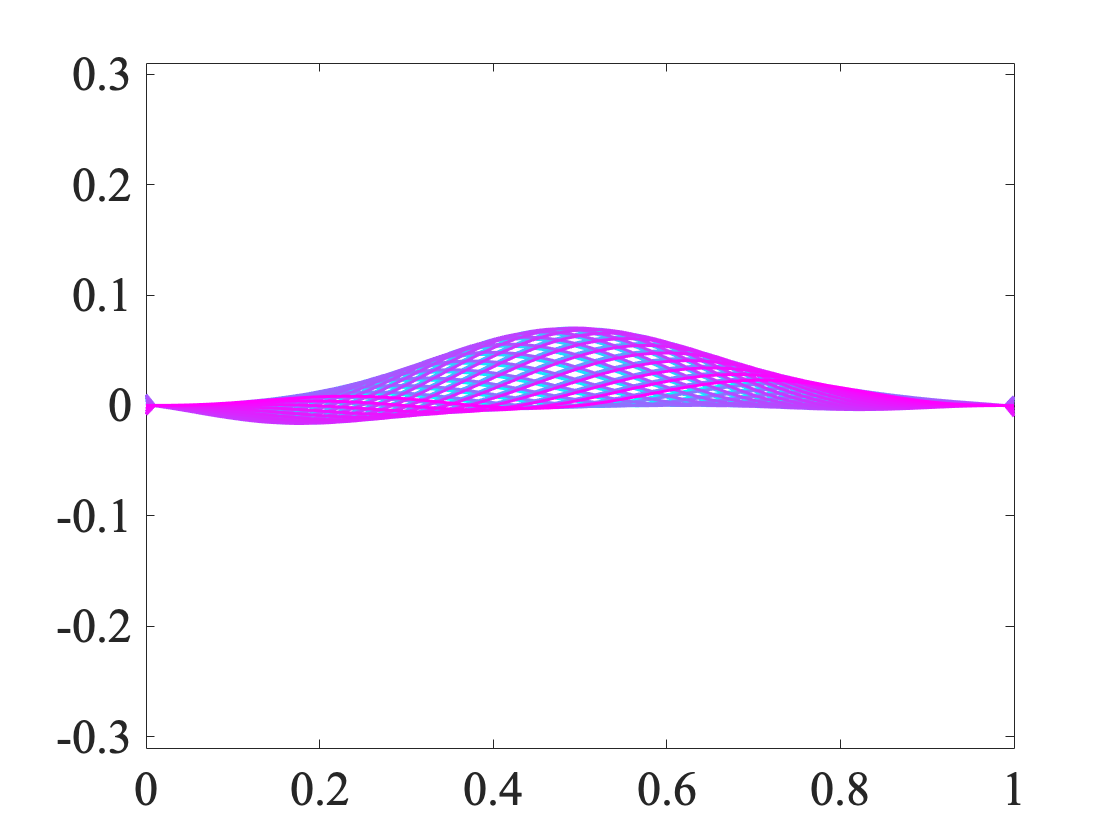}
		\caption{}
	\end{subfigure}
\includegraphics[scale=0.2]{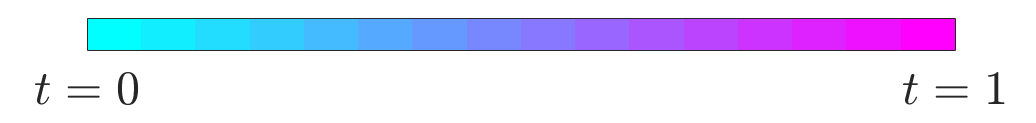}
\caption{Plots of the expression $(\kappa_0)_s(\kappa-\kappa_0)$ along the length of the filament over the course of one period in time for one example of no swimming, bad swimming, and good swimming. Shown are: (A) $F_1 = \cos(4\pi s)$, $F_2 = \cos(2\pi s)$ (forcing number 1), (B) $F_1 = \cos(2\pi s)+ \sin(2\pi s)$, $F_2=F_1$ (forcing number 3) and (C) $F_1 = \cos(2\pi s)$, $F_2 = \sin(2\pi s)$ (forcing number 6). }
\label{fig:intterm}
\end{figure}

In case (1), we note that the integrand $(\kappa_0)_s(\kappa-\kappa_0)$ is odd about $s=\frac{1}{2}$ at each time step and hence integrates to zero along the length of the filament, leading to no net motion due to spatial symmetry.
In case (3), we see that $(\kappa_0)_s(\kappa-\kappa_0)$ does not integrate to zero in $s$ but very nearly integrates to zero in time. The (near) lack of displacement here can be attributed to time symmetry. 
Finally, in case (6) the function $(\kappa_0)_s(\kappa-\kappa_0)$ clearly integrates to a positive number in space and time, leading to net motion (note that the displacement is negative according to the sign of \eqref{swim_speed1}).

\appendix
\section{Validation of numerical method}\label{app:num}
We validate the numerical method for the formulation described in Section \ref{sec:numerics} through comparison with a direct discretization of the classical inextensible fiber formulation \eqref{original_eqn}. 

The implementation of \eqref{original_eqn} requires description. We first rewrite the system \eqref{original_eqn} as follows:
\begin{equation}
\begin{aligned}\label{original_eqn_again}
\frac{\p\X}{\p t}(s,t) &= -\big({\bf I}+\gamma\X_s\X_s^{\rm T}\big)\bm{F}_s\,, \; \bm{F}=\bm{Q}_s-\wh{\tau}\X_s\,, \; 
\bm{Q}=\X_{ss}-\kappa_0\be_{\rm n}\,,\\
\abs{\X_s}^2&=1\,, \\
\bm{Q}\big|_{s=0,1}&=0\,, \quad \bm{F}\big|_{s=0,1}=0 \,.
\end{aligned}
\end{equation}
Our spatial discretization is as in Figure \ref{fig:disc}, and we denote the arclength coordinate by $s_i=i\triangle s, i=0,\dots,N$ where $\triangle s=1/N$.
Let us also define the midpoints $s_{i+1/2}=(i+1/2)\triangle s, \; i=0,\dots,N-1$. Let $\X_i$ be the discretization of $\X$ and $\wh{\tau}_{i+1/2}$
be the discretization of $\wh{\tau}$. As the notation suggests, $\X_i$ are seen as residing at $s_i$ whereas $\wh{\tau}_{i+1/2}$ reside 
at the midpoints $s_{i+1/2}$. Our $3N+2$ unknowns are $\X_i$ and $\wh{\tau}_{i+1/2}$. First, set:
\begin{equation}\label{Qdisc}
\begin{split}
\bm{Q}_i&=\frac{1}{(\triangle s)^2}\paren{\bm{X}_{i+1}-2\bm{X}_i+\bm{X}_{i-1}}-\kappa_{0,i}\bm{e}_{{\rm n},i},\quad i=1,\dots,N-1,\\
\bm{e}_{{\rm n},i}&=\mc{R}_{\pi/2}\bm{e}_{{\rm t},i}, \; \bm{e}_{{\rm t},i}=\frac{\X_{i+1}-\X_{i-1}}{\abs{\X_{i+1}-\X_{i-1}}},\; \mc{R}_{\pi/2}=\begin{pmatrix} 0 & -1 \\ 1 & 0\end{pmatrix},\quad i=1,\dots,N-1, \\
\bm{Q}_0&=\bm{Q}_N=0.
\end{split}
\end{equation}
Using the above expressions, define:
\begin{equation*}
\bm{F}_{i+1/2}=\frac{1}{\triangle s}\paren{\bm{Q}_{i+1}-\bm{Q}_i}-\wh{\tau}_{i+1/2}\frac{\bm{X}_{i+1}-\bm{X}_i}{\triangle s}, \; i=0,\dots,N-1.
\end{equation*}
We thus obtain the following semi-discretized system:
\begin{equation}
\begin{split}
\frac{d\X_i}{dt}&=-\frac{1}{\triangle s}\paren{{\bf I}+\gamma \bm{e}_{{\rm t},i}\bm{e}_{{\rm t},i}^{\rm T}}(\bm{F}_{i+1/2}-\bm{F}_{i-1/2}) \text{ for } i=1,\dots,N-1,\\
\bm{F}_{1/2}&=\bm{F}_{N-1/2}=0,\\
\frac{\abs{\X_{i+1}-\X_i}^2}{(\triangle s)^2}&=1, \; i=0,\dots, N-1.
\end{split}
\end{equation}
Notice that this is a differential algebraic system for $\X_i$ and $\wh{\tau}_{i+1/2}$, and there are exactly $3N+2$ equations matching 
the number of unknown functions. We discretize the above equation in time as follows. We let $\triangle t$ be the time step. Let $\X_i^n$
and $\wh{\tau}_{i+1/2}^n$ denote the values of $\X_i$ and $\wh{\tau}_{i+1/2}$ evaluated at time $n\triangle t, n=0,1,2,\dots\,$. 
Our time discretization is essentially a backward Euler method 
in which some parts of the equation are treated explicitly. Given $\X_i^{n-1}$, we solve the following system of equations to find $\X_i^n$ and $\wh{\tau}_{i+1/2}^n$:
\begin{equation}\label{method_b}
\begin{split}
\frac{\X_i^n-\X_i^{n-1}}{\triangle s}&=-\frac{1}{\triangle s}\paren{{\bf I}+\gamma \bm{e}_{{\rm t},i}^{n-1}\bm{e}_{{\rm t},i}^{n-1, \rm T}}(\bm{F}_{i+1/2}^n-\bm{F}_{i-1/2}^n)\,, \; i=1,\dots N-1\,,\\
\bm{F}_{1/2}^n&=\bm{F}_{N-1/2}^n=0\,,\\
\bm{F}_{i+1/2}^n&=\frac{1}{\triangle s}\paren{\bm{Q}_{i+1}^n-\bm{Q}_i^n}-\wh{\tau}_{i+1/2}^n\frac{\bm{X}_{i+1}^n-\bm{X}_i^n}{\triangle s}\,, \; i=0,\dots,N-1\,,\\
\bm{Q}_i^n&=\frac{1}{(\triangle s)^2}\paren{\bm{X}^n_{i+1}-2\bm{X}_i^n+\bm{X}_{i-1}^n}-\kappa_{0,i}^n\bm{e}_{{\rm n},i}^{n-1}\,,\quad i=1,\dots,N-1\,,\\
\bm{Q}_0^n&=\bm{Q}_N^n=0\,,\\
\frac{\abs{\X_{i+1}^n-\X_i^n}^2}{(\triangle s)^2}&=1, \; i=0,\dots, N-1\,.
\end{split}
\end{equation}
In the above, $\bm{e}_{\rm t,i}^{n-1}$ and $\bm{e}_{{\rm n},i}^{n-1}$ are computed with $\X_i^{n-1}$ using the second line of equation \eqref{Qdisc}.
Note also that $\kappa_0$ and hence $\kappa_{0,i}^n$ are known quantities.
The above constitute $3N+2$ equations in the unknowns $\X_i^n$ and $\wh{\tau}_{i+1/2}^n$. These equations are solved using Newton's method. \\

Hereafter we will refer to the method \eqref{our_disc0}--\eqref{our_disc2} as Method (a), and the method \eqref{method_b} as Method (b), and denote the corresponding fiber configurations by $\X_a(s,t)$ and $\X_b(s,t)$, respectively. 

We begin by comparing the methods in the case of zero preferred curvature $\kappa_0\equiv0$. The initial filament configuration is a semicircle, and the fiber rapidly relaxes into a straight line. In Figure \ref{fig:comparison1} we evolve the fiber until time $t=0.004$ and plot the norm of the difference between Method (a) and Method (b) as the number of fiber segments $N$ is increased. We plot the $L^\infty$ and $L^2$ norms of the difference $\norm{\X_a-\X_b}$, which both display $O(N)$ convergence.
In Figure \ref{fig:comparison2} we plot the actual fiber configurations $\X_a(s,t)$ (blue) and $\X_b(s,t)$ (red) at different times $t$ for $N=100$. 

\begin{figure}[!ht]
\centering
	\begin{subfigure}[b]{0.45\textwidth}
		\includegraphics[scale=0.18]{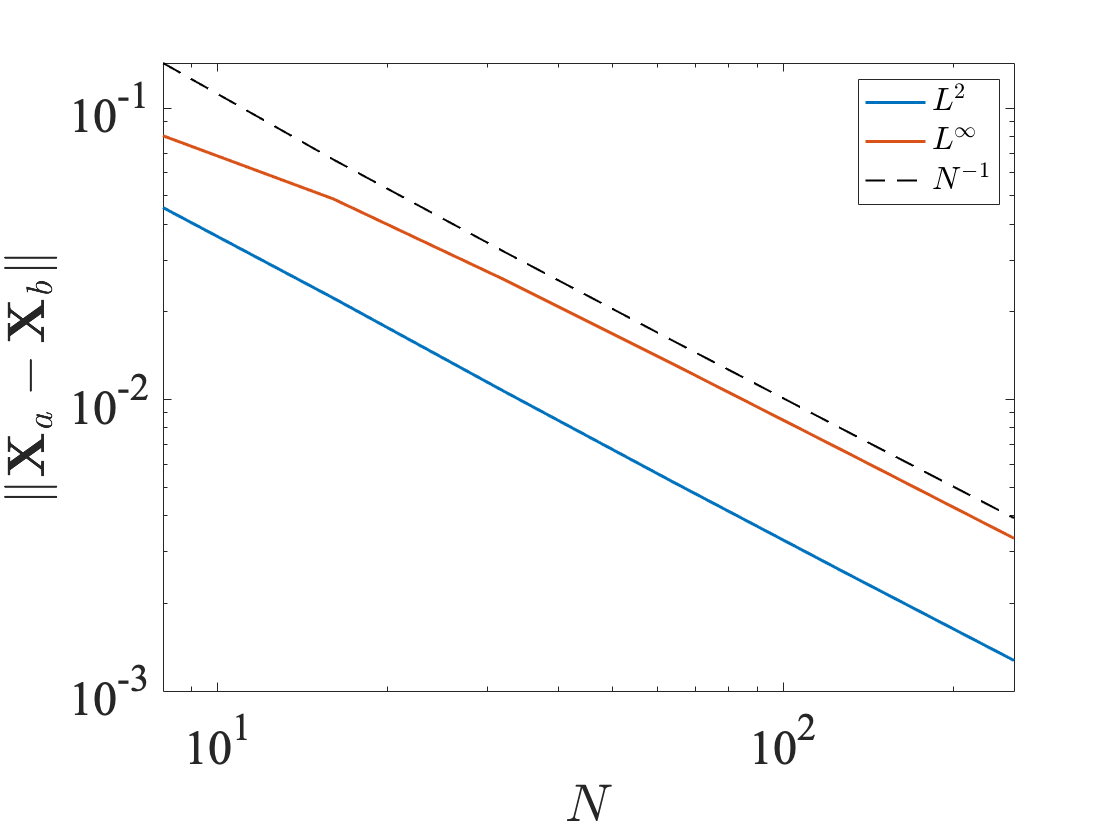}
		\caption{}
		\label{fig:comparison1}
	\end{subfigure}
	\begin{subfigure}[b]{0.45\textwidth}
		\includegraphics[scale=0.18]{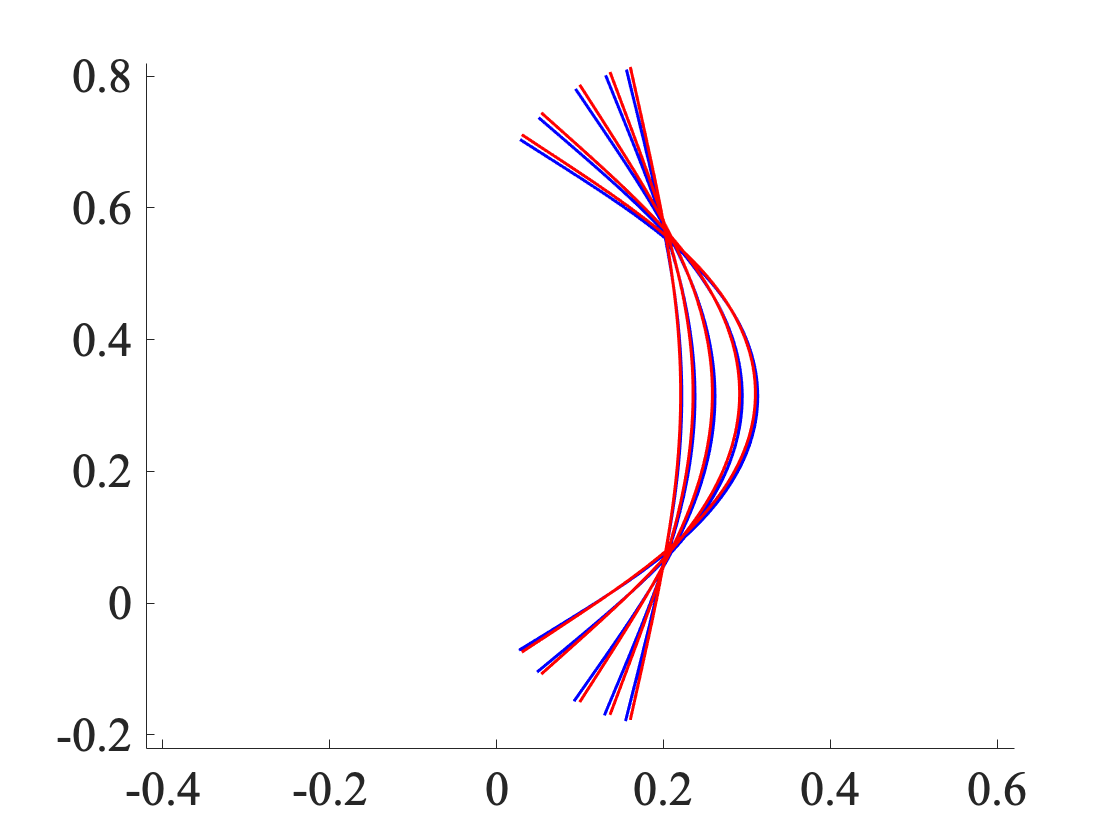}
		\caption{}
		\label{fig:comparison2}
	\end{subfigure}
\label{fig:comparison}
\caption{ (A) Log-log plot of the $L^\infty$ (red) and $L^2$ (blue) norm of the difference in fiber positions $\norm{\X_a-\X_b}$ at time $t=0.004$ as the discretization $N$ is increased. The methods display $O(N)$ convergence to each other.
(B) Comparison of fiber positions $\X_a(s,t)$ (blue) and $\X_b(s,t)$ (red) at $t=0.0005$, 0.001, 0.002, 0.003, and 0.004. Both fibers are discretized using $N=100$ segments.}
\end{figure}

We next compare Methods (a) and (b) using the traveling wave forcing $\kappa_0(s,t)=\sin(2\pi (s-t))$. The initial filament configuration is a straight line along the $x$-axis. In Figure \ref{fig:comparison3} we evolve the fiber until time $t=0.17$ and plot $\norm{\X_a-\X_b}_{L^\infty(I)}$ and $\norm{\X_a-\X_b}_{L^2(I)}$ as the number of fiber segments $N$ is increased. As in the $\kappa_0\equiv0$ setting, we observe $O(N)$ convergence between the methods.
In Figure \ref{fig:comparison4}, we again plot the actual positions $\X_a(s,t)$ (blue) and $\X_b(s,t)$ (red) for different snapshots in time. 

\begin{figure}[!ht]
\centering
	\begin{subfigure}[b]{0.45\textwidth}
		\includegraphics[scale=0.18]{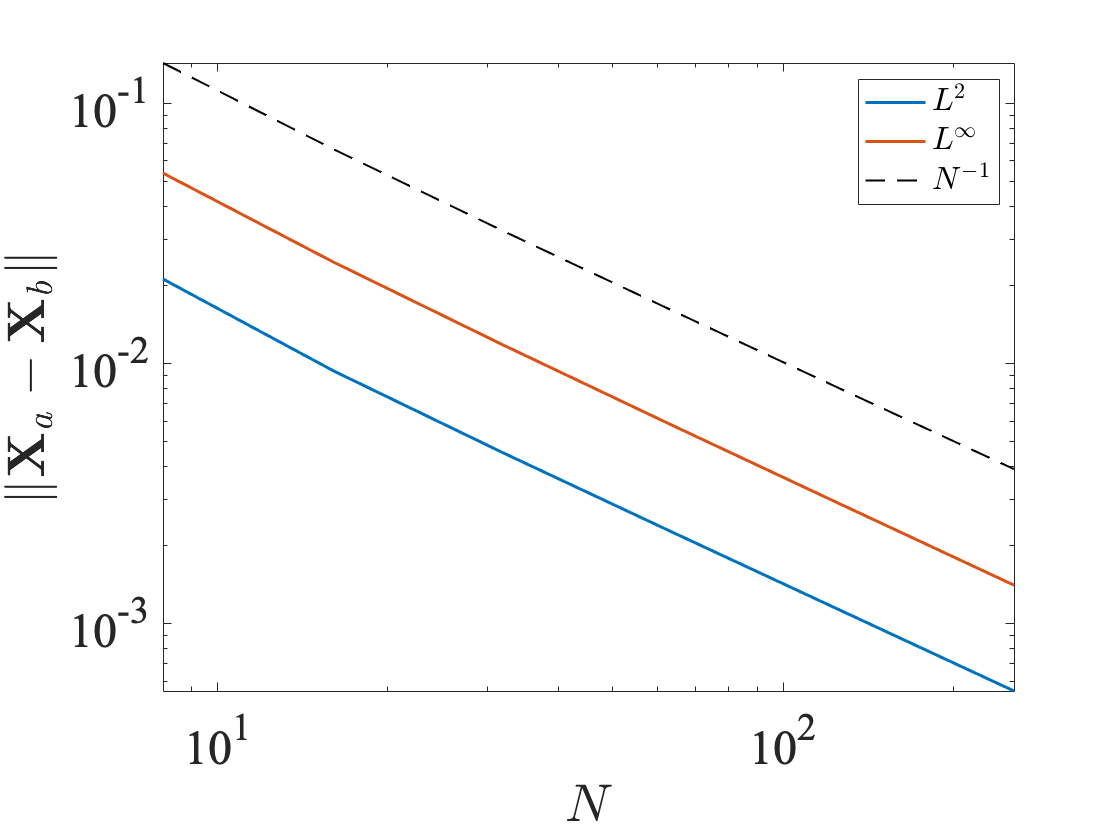}
		\caption{}
		\label{fig:comparison3}
	\end{subfigure}
	\begin{subfigure}[b]{0.45\textwidth}
		\includegraphics[scale=0.18]{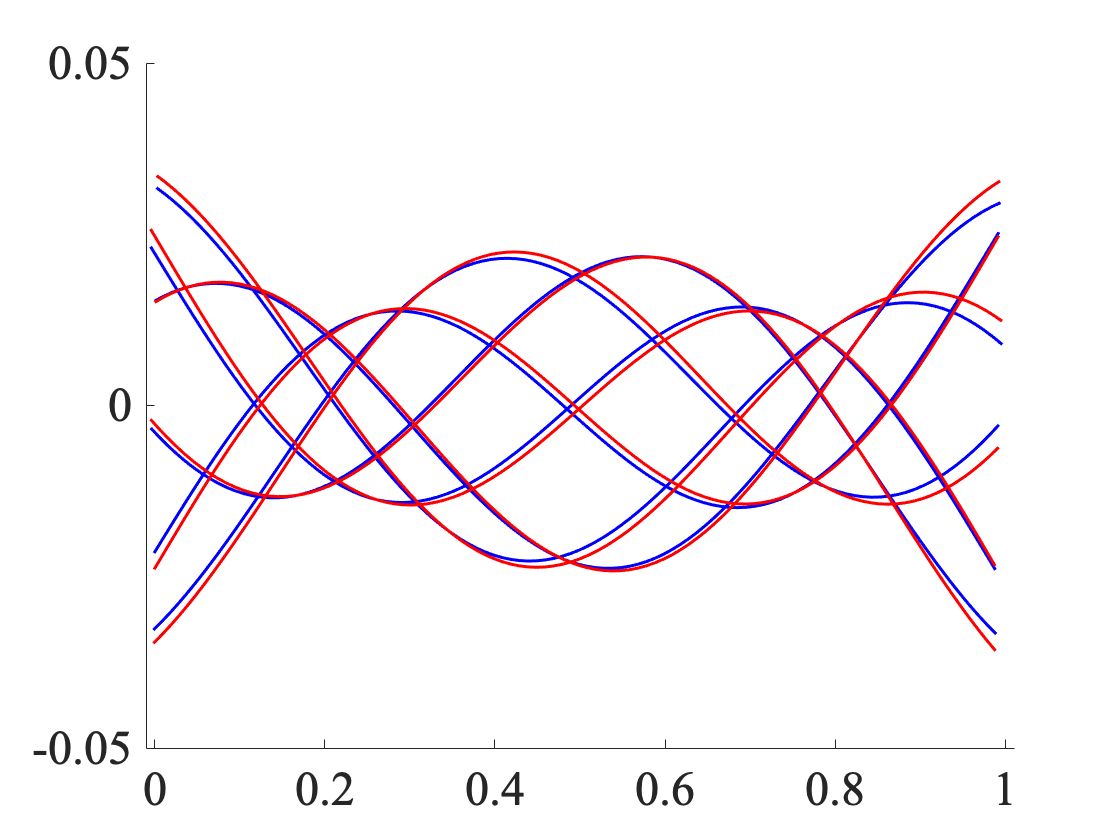}
		\caption{}
		\label{fig:comparison4}
	\end{subfigure}
\label{fig:comparison0}
\caption{ A. Log-log plot of the $L^\infty$ (red) and $L^2$ (blue) norm of the difference in fiber positions $\norm{\X_a-\X_b}$ at time $t=0.17$ as the discretization $N$ is increased. The methods display $O(N)$ convergence to each other.
B. Comparison of fiber positions $\X_a(s,t)$ (blue) and $\X_b(s,t)$ (red) at $t=0.17$, 0.30, 0.49, 0.64, 0.86, and 1.00. Both fibers are discretized using $N=100$ segments.}
\end{figure}

\vspace{0.5cm}
{\bf Acknowledgments.} \quad
Y.M. acknowledges support from the Math+X grant from the Simons Foundation and NSF DMS-2042144.
L.O. acknowledges support from NSF Postdoctoral Fellowship DMS-2001959 and thanks Dallas Albritton for helpful discussion regarding the regularity theory of this problem.


\bibliographystyle{abbrv} 
\bibliography{RFTbib}


\end{document}